\newtheorem{theorem}{Theorem}
\newtheorem{proposition}[theorem]{Proposition}
\newtheorem{lemma}[theorem]{Lemma}
\newtheorem{corollary}[theorem]{Corollary}
\theoremstyle{definition}
\newtheorem{remark}[theorem]{Remark}
\newcommand{\cref}[1]{Corollary~\ref{c.#1}}
\numberwithin{equation}{section}
\numberwithin{theorem}{section}
\newcommand{\R}{\mathbb{R}}
\newcommand{\ep}{\varepsilon}
\newcommand{\RR}{\boldsymbol{R}}
\newcommand{\rrho}{{\boldsymbol{\rho}}}
\newcommand{\UU}{{\boldsymbol{U}}}
\newcommand{\average}{{\mathchoice {\kern1ex\vcenter{\hrule height.4pt
width 6pt depth0pt} \kern-9.7pt} {\kern1ex\vcenter{\hrule
height.4pt width 4.3pt depth0pt} \kern-7pt} {} {} }}
\newcommand{\ave}{\average\int}
\newcommand{\ltt}{ {\textstyle \frac{\lambda^t}{t}} }
\newcommand{\s}{s}
\renewcommand{\bar}{\overline}
\renewcommand{\tilde}{\widetilde}
\begin{document}

\title[Quantitative stability of the free boundary in the obstacle problem]{Quantitative stability of the free boundary in the obstacle problem}

\begin{abstract}
We prove some detailed quantitative stability results for the contact set and the solution of the classical obstacle problem  in $\R^n$ ($n \ge 2$) under perturbations of the obstacle function, which is also equivalent to studying the variation of the equilibrium measure in classical potential theory under a perturbation of the external field. 

To do so, working in the setting of the whole space, we examine the evolution of the free boundary $\Gamma^t$ corresponding to the boundary of the contact set for a family  of obstacle functions $h^t$.  
Assuming that $h=h^t (x) = h(t,x)$ is $C^{k+1,\alpha}$ in $[-1,1]\times \R^n$ and that the initial free boundary $\Gamma^0$  is regular, we prove that $\Gamma^t$ is twice differentiable in $t$ in a small neighborhood of $t=0$. Moreover, we show that the ``normal velocity'' and the ``normal acceleration'' of $\Gamma^t$ are respectively $C^{k-1,\alpha}$ and $C^{k-2,\alpha}$ scalar fields on $\Gamma^t$. 
This is accomplished by deriving equations for these velocity and acceleration  and studying the regularity of their solutions via single and double layers estimates from potential theory.
\end{abstract}

\author[S. Serfaty]{Sylvia Serfaty}
\address{Courant Institute, New York University, 251 Mercer st, New York, NY 10012, USA.}
\email{serfaty@cims.nyu.edu}

\author[J. Serra]{Joaquim Serra}
\address{ETH Z\"urich, Department of Mathematics, R\"amistrasse 101, 8092 Z\"urich, Switzerland.}
\email{joaquim.serra@math.ethz.ch}

\keywords{Obstacle problem, contact set, coincidence set, stability, equilibrium measure, potential theory}
\subjclass[2010]{35R35,31B35,49K99}
\date{\today}

\maketitle
%\tableofcontents

\section{Introduction}

\subsection{Motivation of the problem}

Consider the classical obstacle problem (see for instance \cite{KN,Caf2}). If the obstacle $h$ is perturbed into $h+t \xi$ with $t$ small and $\xi$ regular enough, how much does the contact set (or coincidence set) move? The best known answer to this question is in a paper by Blank \cite{Blank}  which proves that the new contact set is $O(t)$-close to the old one in Hausdorff distance, in the setting of a bounded domain with Dirichlet boundary condition.  Some results are also proved in \cite{schaeffer} in an analytic setting, by Nash-Moser inversion.

Our paper is concerned with getting stronger and more quantitative stability estimates, in particular obtaining closeness of the contact sets in $C^{k, \alpha}$ norms with explicitly described first and second derivatives with respect to $t$, which come together with an explicit asymptotic expansion of the solution itself.
We believe that such results are of natural and independent interest for the obstacle problem. They are also for us motivated by  an application on the analysis of Coulomb systems in statistical mechanics for the paper \cite{ls} which relies on the present paper. 

Let us get into more detail on this aspect.
  In potential theory, the  so-called (Frostman) ``equilibrium measure" for Coulomb interactions with an external ``field" $Q$  is the unique probability measure $\mu$  on $\R^n$ which minimizes
    \begin{equation}
  \int_{\R^n \times \R^n} P(x-y) d\mu(x) \, d\mu(y)+ \int_{\R^n} Q(x) d\mu(x)\end{equation}
  where $P$ is the Newtonian potential in dimension $n$.
 If $Q$ grows fast enough at infinity, then setting 
  \begin{equation}\label{em}u(x)=\int_{\R^n} P(x-y) d\mu(y),\end{equation}
  the equilibrium measure $\mu$ is compactly supported and uniquely characterized by the fact that there exists a constant $c$ such that 
  $$ u\ge c- \frac{Q}{2} \quad \text{and} \ u= c- \frac{Q}{2} \quad \mu-a.e,$$ cf. for instance \cite{safftotik}.
  We thus find that $\mu= -\Delta u$ where
 $u$ solves the classical obstacle problem in the whole space
 $$\min \{- \Delta u, u-h\}=0$$
  with obstacle $h= c-\frac{Q}{2}$ --- the two problems (identifying the equilibrium measure and solving the obstacle problem) are in fact convex dual minimization problems as seen in   \cite{ET}, cf. for instance  \cite[Chap. 2]{zurichlectures} for a description of this correspondence. Thus, the support of the equilibrium measure is equal to the contact set  wherever the obstacle is ``active". 
  
   The understanding of the dependence of the equilibrium measure on the external field -- which is thus equivalent to the understanding of the dependence of the solution and its contact set on the obstacle function -- is crucial for the analysis of   systems of particles with logarithmic or Coulomb interactions, in particular it allows to show that the linear statistics  of fluctuations of such systems converge to Gaussians. Following the method first introduced by \cite{johansson}, this relies on the computation of the Laplace transform of the fluctuations, which directly leads to considering  the same system but with perturbed external field.  Previously, the analysis of the perturbation of the equilibrium measure, as done in \cite{ahm}, were relying on Sakai's theory \cite{sakai}, a  complex analytic approach which is thus only valid in two dimensions and  imposed placing analyticity assumptions on the external field and the boundary of the coincidence set. 
  
In that context,  the evolution  of the contact sets sometimes goes by the name ``Laplacian growth" or ``Hele-Shaw  flow" or ``Hele-Shaw equation", cf \cite{am1,am2}, and seems related to the quantum Hele-Shaw flow introduced by the physicists Wiegmann and Zabrodin \cite{W}. It has only been examined in dimension~2.

 \subsection{Setting of the study}

Both for simplicity and for the applications we have in mind mentioned above, we consider global solutions to the obstacle problem in $\R^n$, $n\ge2$. 
We note that the setting in $\R^2$ is slightly different than the setting in $\R^n$ for $n\ge 3$ due to the fact that the logaritmic Newtonian potential does not decay to zero at infinity, and this will lead us to often making parallel statements about the two.
 We also note that the potential $u$ associated to the equilibrium measure in \eqref{em} behaves like $P$ at infinity, since $\mu$ is a compactly supported probability measure, i.e. tends to $0$ if $n\ge 3$ and behaves like $- \frac{1}{2\pi} \log |x|$ if $n=2$. Specifying the total  mass of $-\Delta u$ is equivalent to specifying the ratio of $\frac{u}{-\log |x|}$ at infinity in dimension $2$, or to adding an appropriate\footnote{Let $u^t$ be defined as  \eqref{eq-nge3}. For $n\ge 3$ there is a nonlinear (but monotone and continous) relation between the mass $\int_{\R^n} \Delta u^t$ and value of the constant $c^t$. For $c^t$ large enough the mass is $0$ when decreasing $c^t$  the mass increases continuously. This allows to solve the equation with prescribed mass by varying the constant $c^t$} constant to $u$ in dimension $ n\ge 3$. %\cm{can you please check that that sentence is correct? I am confused now. I want the reader to understand the correspondence with the potential theory situation. The sentence is correct although if one read is quickly it seems that also in dimension 3 or higher the value at infty gives the charge, and instead there is a nonlinear relation....}

With the above motivation, in order to consider the perturbations of the obstacle,  we thus consider  for each $t\in[-1,1]$,  given  $c^t$ a function of $t$,  the function $u^t$ solving the obstacle problem
\begin{equation}\label{eq-nge3}
\min \{ -\Delta u^t , u^t-h^t \} = 0 \quad \mbox{in }\R^n,
\qquad 
\begin{cases}
\lim_{|x|\to \infty} u^t(x)  = c^t \quad & n\ge 3 
\\
\lim_{|x|\to \infty} \frac{u^t(x)}{-\log |x|}  = c^t \quad & n=2.
\end{cases}
\end{equation}

We assume that $\Delta h^0<0$  on $\{u^0=h^0\}$, i.e. the obstacle must be ``active" in the contact set, and 
\begin{equation}\label{limhinfty}
\begin{cases}
\lim_{|x|\to \infty} h^t(x)  < c^t \quad & n\ge 3 
\\
\lim_{|x|\to \infty} \frac{h^t(x)}{-\log |x|}  < c^t \quad & n=2,
\end{cases}
\end{equation}
\begin{equation}\label{rghtt}
h = h^t(x)=h(t,x) \in C^{k-1,\alpha}([-1,1]\times B_{\RR} )
\end{equation}
while
\begin{equation}\label{ct}
c = c^t = c(t) \in C^{2}([-1,1]).
\end{equation}
For $n=2$ we assume that $c>0$.

In addition, we assume that
\begin{equation} \label{compactsupp}
\Delta(h^t-h^0) \quad   \text{is compactly supported in }  B_{\RR}
\end{equation}
and
\begin{equation} \label{condinfty}
h^t-h^0\to 0 \quad \text{as}\,  |x|\to \infty, \quad \text{resp.} \  \frac{h^t-h^0}{-\log |x|} \to 0\ \quad \text{as} \, |x|\to \infty.
\end{equation}
In particular, letting $\dot{}$ denote the derivative with respect to $t$, this implies that 
\begin{equation}\label{condinfty2}
\dot h^t \to 0 \quad \text{as}\,  |x|\to \infty, \quad \text{resp.} \  \frac{\dot h^t}{-\log |x|} \to 0\ \quad \text{as} \, |x|\to \infty.
\end{equation}

Let us denote 
\[ \Omega^t := \{u^t-h^t>0\}\qquad \mbox{and} \qquad \Gamma^t := \partial{\Omega^t}\] the complement of the contact set and the free boundary, respectively.

We will assume that  all points of the ``initial'' free boundary $\Gamma^0$ are regular points in the sense of Caffarelli (see \cite{Caf1, Caf2}). 
In particular we assume that $\Omega^0$ is an open set with smooth boundary.

For the analysis of the paper it is convenient to identify precisely the quantities on which the (constants in the) estimates depend. 
To this aim, let us fix $\rrho>0$ and make the following quantitative  assumptions.

First, we assume that, for some $\UU\subset B_{\RR}$ we have 
\begin{equation}\label{rho1}
 \Delta h^0\le-\rrho \quad \mbox{in }\overline{\UU}  \qquad \mbox{and}\qquad  
 \begin{cases}
 u^0-h^0\ge \rrho \quad \mbox{in }\R^n\setminus \UU \quad & n\ge 3
 \\
 \frac{u^0-h^0}{-\log |x|}\ge \rrho\quad \mbox{in }\R^n\setminus \UU \quad & n= 2,
\end{cases}
\end{equation} 
where $U\subset B_{\RR}$ is some open set containing $\{u^0=0\}$. 

Second, we assume that
\begin{equation}\label{rho2}
\mbox{all points of } \Gamma^0 \mbox{ can be touched from both sides by balls of radius }\rrho.
\end{equation} 
This is a quantitative version of the assumption that all points of $\Gamma^0$ are regular points.

Throughout the paper, if $\mathcal C$ is a set of parameters of the problem, we denote by
$C(\mathcal C)$ a constant depending only on  $\mathcal C$.
We denote 
\begin{equation} \label{constants}
\boldsymbol{\mathcal C} :=  \big\{ n, k, \alpha, \RR, \UU, \rrho, \|h\|_{C^{k+1,\alpha}([-1,1]\times\overline{U})}, \|c\|_{C^2([-1,1])} \big\}
\end{equation}
and
\begin{equation} \label{constants2}
\boldsymbol{\mathcal C^0} :=  \big\{ n, k, \alpha, \RR, \UU, \rrho, \|h^0\|_{C^{k+1,\alpha}(\overline{U})}, c^0 \big\}
\end{equation}

For $n=2$ we also add to $\boldsymbol{\mathcal C}$ the constant $\inf_{[-1,1]} c >0$.

\subsection{Main result}

Let $t_\circ>0$, $\Psi = \Psi^t(x) = \Psi(t,x)$ be a $1$-parameter family of diffeomorphisms $\Psi: (-t_\circ,t_\circ)\times \R^n \rightarrow \R^n$. We say that $\Psi$ fixes the complement of $\UU$  if $\Psi(x)=x$ for all $x\in \R^n\setminus U$.

We say that $\Psi$ is continuously differentiable if for all $t\in (-t_\circ,t_\circ)$  there exists $\dot \Psi^t\in C^{0}(\R^n; \R^n)$ such that 
\[
\big\| \Psi^{t+s}(x) -  \Psi^{t}(x) - s \,\dot\Psi^t (x)  \big\|_{C^{0}(\R^n; \R^n)} = o(s)
\]
and 
\[
\big\| \dot\Psi^{t+s}(x) -\dot\Psi^t (x)  \big\|_{C^{0}(\R^n; \R^n)} = o(1)
\]
as $s\to 0$.

We say that $\Psi$ is twice continuously differentiable if, in addition,  for all $t\in (-t_\circ,t_\circ)$ there exists $\ddot \Psi^t\in C^{0}(\R^n; \R^n)$ 
\[
\bigg\| \Psi^{t+s}(x) -  \Psi^{t}(x) - s\,\dot\Psi^t (x)  -\frac 1 2  s ^2 \,\ddot\Psi^t (x)  \bigg\|_{C^0(\R^n; \R^n)} = o(s^2)
\]
and
\[
\big\| \ddot\Psi^{t+s}(x) -\ddot\Psi^t (x)  \big\|_{C^{0}(\R^n; \R^n)} = o(1)
\]
as $s\to 0$.

Throughout the paper, given a function  $f:(-t_\circ, t_\circ)\times Y\to \R$  we use the notation  $f= f^t(x) = f(t,x)$  and
\[
\delta_t f^s :=  \frac{f^{s+t} -f^s}{t} \qquad \mbox{and} \qquad \dot f^s :=  \lim_{t\downarrow 0}\delta_t f^s = \partial_t f(s,y).
\]
%According to this notation $\dot h^t$ is the function $x\mapsto \partial_t h(t,x)$. We also denote $\dot %h^t(\infty) = \lim_{x\to \infty } \dot h^t(x)$. Note that in dimension $n \ge 3$, this limit exists thanks to \eqref{rght} and \eqref{rho1bis}.  

The main result of the paper is the following. In its statement, and throughout the paper, we denote by 
\[
\nu^t:\Gamma^t \to S^{n-1}
\] 
the unit normal vector to $\Gamma^t$ pointing towards $\Omega^t$.

\begin{theorem}\label{thm1}
Let $n\ge 2$, $k\ge1$, $\alpha\in(0,1)$,  and $u^t$ satisfying \eqref{eq-nge3} with $h$ and $c$ satisfying \eqref{limhinfty} ---\eqref{condinfty}.
Assume that \eqref{rho1} and \eqref{rho2} hold. 

Then,  there exists $t_\circ>0$ and a 1-parameter differentiable family of diffeomorphisms $\Psi^t \in C^{k,\alpha}\big(\R^n;\R^n\big)$ that  fixes the complement of $\UU$  and which  satisfies, for every $t\in(-t_\circ,t_\circ)$
\[
\Psi^t(\Omega^0) =\Omega^t, \qquad \Psi^t(\Gamma^0) =\Gamma^t
\]
\begin{equation}  \label{psidot}
\| \dot \Psi^t \|_{C^{k-1,\alpha}(\R^n) } \le C \qquad \mbox{and}\qquad     (\dot\Psi^t\circ (\Psi^t)^{-1})  \cdot \nu^t = \frac{\partial_{\nu^t}V^t}{\Delta h^t} \quad \mbox{on }\Gamma^t,
\end{equation}
where $V^t := \dot u ^t - \dot h^t$ is the solution\footnote{Note that since we assumed that  $\dot h^t$  tends to $0$ (resp. is $\ll |\log |x||$) at $\infty$,
$V^t$ is the unique such that $V^t + \dot h^t$ is bounded, coincides with $h^t$ in the complement of $\Omega^t$ and is harmonic in $\Omega^t$. In fact, $V^t + \dot h^t$   is the unique bounded harmonic extension of $\dot h^t$ outside of $(\Omega^t)^c$} to 
\begin{equation}\label{diriV}
\begin{cases}
\Delta  V^t  = -\Delta\dot h^t \quad &\mbox{in }\Omega^t
\\
V^t =  0 &\mbox{on }\Gamma^t
\\
\lim_{x\to \infty } V^t(x) = \dot c^t, & \quad \text{resp. } \lim_{x\to \infty } \frac{V^t(x)}{-\log |x|}= \dot c^t \quad (\text{for} \  n=2).
\end{cases}
\end{equation}
In addition, we have $$\dot u^t = \dot h^t + V^t\chi_{\Omega^t} \quad \text{in all}\   \R^n.$$
If moreover $k\ge 2$ then $\Psi$ is twice differentiable and we have
\begin{equation}  \label{psiddot}
\| \ddot \Psi^t \|_{C^{k-2,\alpha}(\R^n) } \le C_\circ
\end{equation}
and
\begin{equation}\label{estwt_restated}
\| \ddot u^t\|_{L^\infty(\R^n)} +\| \nabla \ddot u^t\|_{C^{k-2,\alpha}\left((\Omega^0\cup\Omega^t)^c\,\cup \,\overline{\Omega^0\cap\Omega^t}\right)} \le C_\circ.
\end{equation}

The constants $t_\circ$ and $C_\circ$ depend only on\footnote{The set of constants of the problem $\boldsymbol{\mathcal C}$ was defined in \eqref{constants}.} $\boldsymbol{\mathcal C}$.
\end{theorem}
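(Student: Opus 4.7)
My plan is first to identify, at least formally, what $\dot u^t$ and the normal velocity of $\Gamma^t$ must be, then build $\Psi^t$ from that velocity and justify the derivatives rigorously using potential theory. The formal calculation is clean: in $\Omega^t$ we have $-\Delta u^t=0$, so differentiating in $t$ and subtracting $\dot h^t$ gives $\Delta V^t=-\Delta\dot h^t$. On the contact set $u^t\equiv h^t$, forcing $V^t\equiv 0$, and hence $V^t=0$ on $\Gamma^t$; together with the conditions at infinity coming from $\dot c^t$, this is exactly \eqref{diriV}, and it also yields $\dot u^t=\dot h^t+V^t\chi_{\Omega^t}$. To extract the normal velocity, let $\gamma^t\in\Gamma^t$ vary smoothly in $t$ and use the two identities $u^t(\gamma^t)=h^t(\gamma^t)$ and $\nabla u^t(\gamma^t)=\nabla h^t(\gamma^t)$ (the latter because $u^t-h^t$ has a minimum on $\Gamma^t$). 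Differentiating the gradient identity in $t$ gives $\nabla V^t=(D^2h^t-D^2u^t)\dot\gamma^t$ on $\Gamma^t$. Since $w:=u^t-h^t$ vanishes together with its gradient on $\Gamma^t$, its Hessian has only a normal-normal entry, equal to $\Delta w=-\Delta h^t$. Projecting onto $\nu^t$ and using $\Delta h^t<0$ yields $\dot\gamma^t\cdot\nu^t=\partial_{\nu^t}V^t/\Delta h^t$, which is the formula in \eqref{psidot}.

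\textbf{Regularity of $V^t$ and construction of $\Psi^t$.} The next step is to prove that $V^t$ is as regular as the data allow, i.e.\ that $\partial_{\nu^t}V^t$ is $C^{k-1,\alpha}$ on $\Gamma^t$. Here I would use the setting of the paper: first, a qualitative stability argument (Blank-type Hausdorff closeness combined with the non-degeneracy \eqref{rho1} and the regular-points assumption \eqref{rho2}) gives that for $|t|<t_\circ$ the free boundaries $\Gamma^t$ are $C^{1,\beta}$ graphs near $\Gamma^0$ with uniform constants. Then I would represent $V^t$ as a Newtonian potential of $-\Delta\dot h^t\chi_{\Omega^t}$ plus a single-layer correction solving the zero Dirichlet problem, and invoke the single/double-layer estimates promised in the abstract to bootstrap the $C^{k,\alpha}$ regularity of $V^t$ up to $\Gamma^t$; this controls $\partial_{\nu^t}V^t/\Delta h^t$ in $C^{k-1,\alpha}(\Gamma^t)$. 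Finally, having the normal velocity as a $C^{k-1,\alpha}$ scalar field on $\Gamma^t$, I extend it to a vector field $X^t$ on $\R^n$ supported in $U$ (e.g.\ multiply $(\partial_{\nu^t}V^t/\Delta h^t)\nu^t$ by a cutoff supported in $U$), and define $\Psi^t$ as the flow of $X^t$. The bound \eqref{psidot} follows from the regularity of $X^t$, and the property $\Psi^t(\Omega^0)=\Omega^t$ follows because $X^t$ matches the true normal velocity of $\Gamma^t$ (this matching, in turn, needs to be checked by a finite-difference comparison of $\Psi^t(\Gamma^0)$ with $\Gamma^t$ using the heuristic computation above).

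\textbf{Second-order analysis.} Differentiating \eqref{diriV} once more in $t$ and tracking how the domain $\Omega^t$ moves at normal velocity $(\partial_{\nu^t}V^t/\Delta h^t)$ gives a Dirichlet problem for $\dot V^t$ in $\Omega^t$, whose right-hand side and boundary data are controlled by $C^{k-1,\alpha}$ quantities. Running the same potential-theoretic machinery as in the first-order step produces $\ddot\Psi^t$ with the claimed $C^{k-2,\alpha}$ bound. The identity $\dot u^t=\dot h^t+V^t\chi_{\Omega^t}$ then gives $\ddot u^t=\ddot h^t+\dot V^t\chi_{\Omega^t}+V^t\,\partial_t\chi_{\Omega^t}$, and the last term is why $\ddot u^t$ can at best be $L^\infty$: it is a distribution supported on the symmetric difference between $\Omega^0$ and $\Omega^t$. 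Away from that symmetric difference, however, $\ddot u^t$ is a smooth harmonic-type object, giving exactly the form of estimate \eqref{estwt_restated}.

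\textbf{Main obstacle.} The genuinely hard part is the regularity of $V^t$ and $\dot V^t$ up to the moving free boundary in the sharp $C^{k,\alpha}$ class, uniformly in $t$: one needs to invert single/double-layer operators on $\Gamma^t$ with constants independent of $t$, using only the $C^{1,\beta}$ stability of $\Gamma^t$ given by the obstacle-problem theory. A second subtle point is verifying rigorously that the flow $\Psi^t$ of the extended velocity actually carries $\Omega^0$ onto $\Omega^t$ (rather than just onto a set tangent to $\Omega^t$ at first order), which requires a Gronwall-type comparison between $\Psi^t(\Gamma^0)$ and $\Gamma^t$ using the above normal-velocity identity as an ODE on the free boundary.
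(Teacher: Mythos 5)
Your first-order heuristic (differentiating $u^t(\gamma^t)=h^t(\gamma^t)$ and $\nabla u^t=\nabla h^t$ along $\Gamma^t$ to get $\dot\gamma^t\cdot\nu^t=\partial_{\nu^t}V^t/\Delta h^t$) is exactly the velocity law the paper proves, and the idea of controlling $V^t$ by layer potentials on uniformly $C^{k,\alpha}$ free boundaries is also the right one. But the proposal has a genuine gap at its core: everything you write presupposes that $t\mapsto\Gamma^t$ and $t\mapsto u^t$ \emph{are} differentiable, and that is precisely what has to be proved. Your derivation of the velocity assumes a smoothly varying point $\gamma^t\in\Gamma^t$; your flow construction assumes the normal velocity exists so that a Gronwall comparison makes sense; your second-order step "differentiates \eqref{diriV} once more in $t$." None of these steps supplies the missing compactness-plus-uniqueness argument. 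The paper's mechanism is: (1) uniform bounds on the difference quotients $\lambda^t/t$ and $(\lambda^t/t-\dot\lambda^0)/t$ in $C^{k-1,\alpha}$ and $C^{k-2,\alpha}$, obtained by writing the potential of $\tfrac1t\chi_{\Omega^0\setminus\Omega^t}\Delta h^0$ as an \emph{average over $\theta\in(0,1)$ of exact single (and double) layer potentials} on intermediate surfaces $\Gamma^t_\theta$ defined via the hodograph coordinate $\sigma=\partial_N\tilde u^0$, so that the jump relation produces the term $\tfrac12\lambda^t/t$ that can be absorbed; and (2) Arzelà--Ascoli together with uniqueness of the limiting equations. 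Without an argument of this type (or a substitute), the existence of $\dot\Psi^t$ and $\ddot\Psi^t$ is simply asserted, not proved.

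Two further concrete omissions. First, the second-order equation is \emph{implicit}: the distributional limit $w$ of $(v^t-v)/t$ contains a single-layer term whose density is (up to known factors) the unknown acceleration itself, and one must use the $\tfrac12 f$ jump of the single layer plus an interpolation/smallness argument (choosing $N$ close to $\nu^0$) to solve for it; your outline treats the second-order problem as if it were an explicit Dirichlet problem for $\dot V^t$. Second, the difference-quotient analysis only works when the domains are nested ($\Omega^t\subset\Omega^0$), which requires $\Delta(h^t-h^0)\ge0$; for general perturbations the paper decomposes $h^t-h^0=\xi^t_++\xi^t_-$ into monotone pieces and works with a two-parameter family $u^{t,\bar t}$, recovering derivatives in all directions from a cone of monotone directions. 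Your proposal never addresses how to handle the non-monotone case, and the symmetric difference $\Omega^0\triangle\Omega^t$ is not a collar region there, so the layer-potential representation of the difference quotients breaks down without this reduction. (Your construction of $\Psi^t$ as a flow rather than via the graph function $\eta^t$ in fixed coordinates is a legitimate variant, but it changes nothing about these gaps.)
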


A informal rephrasing of Theorem \ref{thm1} is as follows. 
If the moving obstacle $h(t,x)$ is $C^{k+1,\alpha}$ and $c(t)$ is $C^2$, then $\Gamma^t$ is``twice differentiable'' in  for $t$ in a small neighborhood of $0$. Moreover, the ``normal velocity'' of $\Gamma^t$ and the ``normal acceleration'' of $\Gamma^t$ are respectively $C^{k-1,\alpha}$ and $C^{k-2,\alpha}$ scalar fields on $\Gamma^t$, with the normal velocity precisely identified via a Dirichlet-to-Neumann transformation:  to compute it, one finds the solution $V^t$ to the Dirichlet problem in a exterior domain \eqref{diriV} and  the normal velocity at a point of $\Gamma^t$ is given by the normal derivative of $V^t$ divided by the Laplacian of the obstacle at that point.

\subsection{Open questions}
It is of course natural to ask whether similar results hold for more general obstacle problems, such as those associated to fully nonlinear operators or to fractional Laplacians.

In the view of our results\footnote{We establish that if $h\in C^{k+1,\alpha}$ then  $\Psi^t \in C^{k,\alpha}$ , $\dot\Psi^t \in C^{k,\alpha}$  and $\ddot\Psi^t \in C^{k-2,\alpha}$}
A natural open question, which we believe to be delicate, is whether one can improve Theorem \ref{thm1} to 
\[ \Psi(t,x) \in C^{k,\alpha} \quad \mbox{(jointly in $t$ and $x$)}.\]

\subsection{Structure of the proof and organisation of the paper}
For the proof, we first reduce to a situation where the contact set is growing, i.e. $\Omega^t\subset \Omega^0$. We then define a  coordinate system near the free boundary $\Gamma^0$, and express the ``height" $\eta^t$ of $\Gamma^t$ in these coordinates.
 
 In Section \ref{sec:apriori}, assuming that an expansion of the type $\eta^t= \eta^0 + \dot {\eta}^0 t+ \frac{1}{2} \ddot{ \eta}^0 t^2+ \dots$ holds as $t \to 0$, we derive equations for $\dot{ \eta}^0$ and $\ddot {\eta}^0$, which allow to obtain explicit formulae and  H\"older regularity for these quantities via single and double layers potential theoretic estimates. These regularity estimates are delicate to obtain because the relations characterizing $\dot {\eta}^0$ and  $\ddot{\eta}^0$ are  at first implicit and one needs to show they can be  ``closed" for regularity.
 
  In Section \ref{sec4}, we show that  the existence of an expansion in $t$  for $\eta^t$, which was previously assumed, does hold. This is done by using a second set of adapted coordinates near $\Gamma^0$ (a sort of hodograph transform) and again single and double layer potential estimates. 
  
  Finally, in Section \ref{sec5} we prove the main result by showing how to treat the general case where the contact set is not necessarily growing. In Appendix \ref{app}, we collect the potential theoretic estimates we need and  some additional proofs.

\vskip .5cm
{\bf Acknowlegments:} S. S. was supported by the Institut Universitaire de France and  NSF grant DMS-1700278. J.S. is supported by ERC Grant ``Regularity and Stability in Partial Differential Equations
(RSPDE)".

\section{Preliminaries}

\subsection{Known results}
Throughout the paper it is useful to quantify the smoothness of the (boundaries of the) domains $\Omega^t$. 
 Let us introduce some more notation with that aim. 
Let $U$ be some open set and $r>0$. 
We write $\partial U\in C^{k,\alpha}_r$ if for all $x_o\in \partial U$ there are some orthonormal coordinates $y_i$, $1\le i\le n$ with origin at $x_o$ (these coordinates may vary from point to point), and a function $F_{x_o}\in C^{k,\alpha}(\overline{B_{r}'})$ such that 
\[
 U\cap \big\{ |y'|<r, |y_n|<r \big\} = \{y_n< F_{x_o}(y')\} \cap\big\{|y'|<r, |y_n|<r \big\},
 \]
 where  $y'=(y_1, y_2, \dots, y_{n-1})$.
 
In this framework we denote
\begin{equation}\label{defCkalpharho}
\|\partial U\|_{C^{k,\alpha}_r} := \sup_{x_o\in \partial U} \|F_{x_o}\|_{C^{k,\alpha}(\overline{B_{r}'})} <\infty,
\end{equation}
where $B'_r =   \{|y'|<r\} \subset \R^{n-1}$.

With the previous assumptions we have in our notation
\begin{proposition}[\cite{Caf1, Caf2,  KN, Blank}]\label{blank}
There exist  universal constants $t_\circ>0$ and $C_o$ depending only on $\boldsymbol{\mathcal C}$ such that the following  hold.

(i) We have 
\[ \|\Gamma^t\|_{C^{k,\alpha}_{\rrho/4}} \le C_o\quad \mbox{for all }t\in (-t_\circ,t_\circ).\]

(ii)  For every pair $t, s\in (-t_\circ,t_\circ)$, the Hausdorff distance between $\Gamma^{t}$ and $\Gamma^s$ satisfies 
\[ d_{\rm Hausdorff}(\Gamma^{t},\Gamma^s) \le C_o\,|t-s|.\]
\end{proposition}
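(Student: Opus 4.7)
The plan is to combine Caffarelli's regularity theorem for regular free boundary points \cite{Caf1, Caf2}, the Kinderlehrer-Nirenberg partial hodograph method \cite{KN}, and Blank's Hausdorff stability estimate \cite{Blank}, executed in a quantitative fashion that tracks the dependence of all constants on $\boldsymbol{\mathcal C}$.

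First I would establish part (i). The starting point is that \eqref{rho2} is a quantitative version of the statement that every point of $\Gamma^0$ is a regular point in Caffarelli's sense, while \eqref{rho1} gives a quantitative nondegeneracy $\Delta h^0 \le -\rrho$ in a neighborhood of the contact set. A quantitative reading of Caffarelli's $C^{1,\alpha'}$ regularity theorem then yields $\|\Gamma^0\|_{C^{1,\alpha'}_{\rrho/2}} \le C(\boldsymbol{\mathcal C^0})$ for some $\alpha' \in (0,1)$. To propagate this to $t \neq 0$, I would first show $\|u^t - u^0\|_{L^\infty(\R^n)} \le C|t|$ via the comparison principle: by \eqref{rghtt}--\eqref{condinfty} we have $\|h^t - h^0\|_{L^\infty(\R^n)} \le C|t|$ (harmonic extension outside $B_{\RR}$ with the prescribed decay at infinity), and by \eqref{ct} we have $|c^t - c^0| \le C|t|$, so $u^t \pm C|t|$ are respectively super- and sub-solutions of the obstacle problem with obstacle $h^0$ and the correct asymptotic constant. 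This forces $\Omega^t \to \Omega^0$ in Hausdorff distance, so that for $t_\circ$ small enough (depending on $\boldsymbol{\mathcal C}$) the two-sided ball condition persists on $\Gamma^t$ with radius $\rrho/2$ and $\Delta h^t \le -\rrho/2$ holds in a neighborhood of the new contact set. A second application of Caffarelli's theorem then gives $\|\Gamma^t\|_{C^{1,\alpha'}_{\rrho/4}} \le C(\boldsymbol{\mathcal C})$ uniformly in $t \in (-t_\circ, t_\circ)$.

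Next I would upgrade from $C^{1,\alpha'}$ to the claimed $C^{k,\alpha}$ regularity via the partial hodograph transform of \cite{KN}. In coordinates centered at $x_0 \in \Gamma^t$ with $\nu^t(x_0) = e_n$, the function $\partial_{y_n}(u^t - h^t)$ vanishes on $\Gamma^t$ and has $\partial_{y_n}$-derivative bounded below by a positive constant (by the quantitative nondegeneracy coming from $\Delta h^t \le -\rrho/2$). Exchanging the roles of $y_n$ and $\partial_{y_n}(u^t - h^t)$ is therefore a valid local diffeomorphism that flattens $\Gamma^t$ and converts the one-sided obstacle problem on $\Omega^t \cap B_{\rrho/4}(x_0)$ into a uniformly elliptic quasilinear Dirichlet problem on a fixed half-ball, whose coefficients inherit $C^{k-1,\alpha}$ regularity from the assumption $h\in C^{k+1,\alpha}$ built into $\boldsymbol{\mathcal C}$ via \eqref{constants}. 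Classical Schauder estimates give $C^{k,\alpha}$ regularity of the transformed solution, and inverting the transform yields the desired bound $\|\Gamma^t\|_{C^{k,\alpha}_{\rrho/4}} \le C(\boldsymbol{\mathcal C})$, completing (i).

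For part (ii), I would invoke a quantitative version of Blank's argument \cite{Blank}, whose starting point is the $L^\infty$ stability $\|u^t - u^s\|_{L^\infty(\R^n)} \le C|t-s|$ already established above. The linear-in-$|t-s|$ Hausdorff rate---sharper than the naive $\sqrt{|t-s|}$ obtained by combining $L^\infty$ stability with the quadratic nondegeneracy $u - h \ge c\, \mathrm{dist}(\cdot, \Gamma)^2$---follows by using the uniform $C^{1,\alpha'}$ graph structure of $\Gamma^s$ from (i) together with local parametrization near each free boundary point, whereby the $L^\infty$ perturbation in $u$ translates into a linear pointwise displacement of $\Gamma^s$ (the gain over the square-root rate reflects the fact that the derivative of $u-h$ along the free boundary normal also picks up an $O(|t-s|)$ contribution). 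The main obstacle across the whole proposition is essentially bookkeeping: each classical regularity and stability result must be re-run with constants depending only on the quantitative parameters in $\boldsymbol{\mathcal C}$, rather than on qualitative features of the specific initial solution $u^0$---this is precisely the reason for the explicit form of the quantitative assumptions \eqref{rho1}--\eqref{rho2}.
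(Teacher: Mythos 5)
Your part (i) follows the same route as the paper (Caffarelli's regular-point theory for a uniform $C^{1,\alpha}$ bound, then Kinderlehrer--Nirenberg to bootstrap to $C^{k,\alpha}$), but one inference is wrong as stated: from Hausdorff closeness of $\Omega^t$ to $\Omega^0$ you cannot conclude that ``the two-sided ball condition persists on $\Gamma^t$ with radius $\rrho/2$'' --- a two-sided ball condition on the \emph{perturbed} free boundary is essentially the uniform $C^{1,1}$ regularity you are in the middle of proving. What does persist, and what Caffarelli's criterion actually needs, is that every $p\in\Gamma^t$ sees a definite amount of contact set, $|B_{\rrho}(p)\cap\{u^t=h^t\}|\ge c_\circ(\boldsymbol{\mathcal C})>0$: this holds because $p$ must lie close to $(\Omega^0)^c$ (by $L^\infty$ stability and nondegeneracy), and $(\Omega^0)^c$ --- which is contained in, or close to, the new contact set --- carries the interior balls of \eqref{rho2}. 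This is how the paper argues, and your Step is repairable by rephrasing it this way.

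The genuine gap is in part (ii). You correctly note that $\|u^t-u^s\|_{L^\infty}\le C|t-s|$ combined with quadratic nondegeneracy only yields the rate $|t-s|^{1/2}$, but your proposed upgrade --- that ``the derivative of $u-h$ along the free boundary normal also picks up an $O(|t-s|)$ contribution'' --- is exactly the assertion that requires proof, and you supply no mechanism for it: interpolating the $L^\infty$ bound against the uniform $C^{1,1}$ bounds for $u^t, u^s$ only gives $|\nabla(u^t-u^s)|\le C|t-s|^{1/2}$, so the square-root barrier is not broken this way. The ingredient your proposal never produces is the sharp measure estimate $|\Omega^{t}\triangle\Omega^{s}|\le C|t-s|$, which the paper explicitly identifies as the key estimate from Blank and which, in this global setting, has a short self-contained proof: integrate $\Delta v^t$ over all of $\R^n$, use the prescribed behavior at infinity (the total mass is determined by $\delta_t c^0$), the compact support of $\Delta(h^t-h^0)$, and the nondegeneracy $-\Delta h^0\ge\rrho$, so that the volume swept by the free boundary is controlled by the $O(t)$ mass injected. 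The linear Hausdorff rate is then extracted from this measure estimate together with the uniform regularity of part (i) (equivalently, via Blank's comparison with normal translates). Without this measure estimate, or a substitute barrier argument, your part (ii) does not close.
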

Proposition \ref{blank}  is contained in the results of \cite{Blank}. However, for the sake of completeness,   we briefly sketch  the proof in the appendix. This is done by combining the classical results for the obstacle problem in \cite{Caf1, Caf2,  KN} and the key sharp estimate $|\Omega^{t}\triangle  \Omega^s|\le C|t-s|$   for the symmetric difference of the positivity sets (or of the contact sets) from \cite{Blank}.

\subsection{Scalar parametrization of deformations (definition of $\eta^t$)}
By Proposition \ref{blank} the free boundaries $\Gamma^t $ are ``uniformly'' $C^{k, \alpha}$ for $|t|$ small and the difference between $\Gamma^{t} $ and $\Gamma^{s} $ is bounded by $C|t'-t|$ in $L^\infty$ norm. A goal of the paper is to prove that the difference is bounded  $C|t'-t|$ also in a  $C^{k-1, \alpha}$ norm. 
To prove this type of result it is convenient to  have a scalar function representing  the ``difference'' between $\Gamma^{t} $ and $\Gamma^{s} $. 
This has a clear meaning locally --- since both $\Gamma^{t}$ and $\Gamma^{s}$ are graphs,  and one can simply subtract the two functions that define these graphs. We  next give a global analogue of this. 

In a open neighborhood  $U_\circ$ of $\Gamma^0$ we define coordinates 
\[ (z,\s): U_\circ \longrightarrow \mathcal Z \times(-\s_\circ, \s_\circ),\]
where $\s_\circ>0$ and $\mathcal Z$ is some smooth approximation of $\Gamma^0$. 

We assume that the vector field
\[ N:= \partial_\s\]
is a smooth approximation of $\nu^0$ on $\Gamma^0$.  More precisely, we assume that 
\begin{equation}\label{defdelta0}
N\in C^\infty(U_\circ; \R^n), \quad  |N|= 1 \quad \mbox{and} \quad N\cdot \nu^t \ge (1-\varepsilon_o) \quad \mbox{for }t\in(-t_\circ,t_\circ),
 \end{equation}
where $\varepsilon_o$ is a constant that in the sequel will be chosen to be small enough ---depending only on  $\boldsymbol{\mathcal C}$.

In this framework, Proposition \ref{blank} implies that for all $t\in (-t_\circ,t_\circ)$ with $t_\circ$ small enough there exists $\eta^t\in C^{k,\alpha}(\mathcal Z)$ such that 
\begin{equation}\label{defeta}
 \Gamma^t \ = \ \{\s = \eta^t(z)\} \subset U_\circ.
 \end{equation}

\begin{remark}
From the data of $\Gamma^0$ we may always construct   $\mathcal Z$ and $(z,\s)$ satisfying the previous properties --- for $\varepsilon_o$ arbitrarily small --- by taking $\mathcal Z$ to be a smooth approximation of $\Gamma^0$ and $N$ a smooth approximation of $\nu^0$.
Once $\mathcal Z$ and $N$ are chosen, the coordinates $(z,\s)$ are then defined respectively as the projection on $\mathcal Z$ and the signed distance to $\mathcal Z$ along integral curves of $N$. 
\end{remark}

\section{A priori estimates}\label{sec:apriori}

Roughly speaking, the goal of this section is to show that if an expansion of the type 
\[\eta^t = \eta^0 + \dot\eta^0 t+ \frac 1 2 \ddot \eta^0 t^2 + \ \cdots,\]
holds, where 
\[ \frac{\eta^t-\eta^0}{t} \rightarrow \dot \eta^0 \quad \mbox{and}\quad \frac{\eta^t -\eta^0- \dot \eta^0 t}{t^2} \rightarrow \frac{\ddot \eta^0}{2} \quad\mbox{ as $t\to 0$, in }C^{0}(\mathcal Z)\] 
then $\dot \eta^0$ and $\ddot \eta^0$ must satisfy certain equations that have uniqueness of solution and a priori estimates.
From these equations we obtain conditional  (or a priori) estimates for $\|\dot \eta^0\|_{C^{k-1,\alpha}(\mathcal Z)}$ and $\|\ddot \eta^0\|_{C^{k-2,\alpha}(\mathcal Z)}$.

In the next sections, let us provisionally assume that 
\begin{equation}\label{orderingassump}
\Delta (h^t -h^0)\ge 0\quad \mbox{and}\quad   c^t-c^0\le 0 
\end{equation}
for all $t\ge 0$, which is not essential but simplifies the analysis: 
Assumption  \eqref{orderingassump} guarantees that  $\Omega^{t}\subset\Omega^0$ for all $t\ge0$.
Indeed, this is an immediate consequence of the characterisation of 
\[ \tilde u^t := u^t-h^t\]
 as the infimum of all nonnegative supersolutions with the same right hand side and appropriate condition at infinity. 
More precisely, we have the following lemma, whose  proof  is standard in dimension $n\ge 3$ and which we sketch in dimension $n=2$ in the appendix. 

\begin{lemma}\label{lemcharinf}
The function $\tilde u^t$ can be defined as the infimum of all $f$ satisfying 
\begin{equation}\label{tildeu}
f \ge 0, \quad \Delta f \le -\Delta h^t, \quad \mbox{and}\quad  \lim_{x\to \infty} (f +h^t) \ge  c^t, \quad \text{resp.}  \ \lim_{x\to \infty} \frac{f+h^t}{-\log |x|} \ge c^t .
\end{equation}
\end{lemma}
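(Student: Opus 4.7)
The plan is a two-step comparison argument. The first step verifies that $\tilde u^t := u^t - h^t$ itself satisfies the three admissibility conditions in \eqref{tildeu}: nonnegativity is just $u^t \ge h^t$ from the obstacle problem; the Laplacian bound follows from splitting $\R^n$ into two pieces, with $\Delta \tilde u^t = -\Delta h^t$ on the open set $\Omega^t$ (since $u^t$ is harmonic there) and $\Delta \tilde u^t = 0 \le -\Delta h^t$ on the interior of the contact set (since $-\Delta h^t \ge 0$ there, by continuity in $t$ from the quantitative assumption $\Delta h^0 \le -\rrho$ and the Hausdorff stability of the contact set from Proposition~\ref{blank}); the condition at infinity is exactly the one satisfied by $u^t = \tilde u^t + h^t$ in \eqref{eq-nge3}.

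For the second step, given any admissible $f$, set $w := \tilde u^t - f$. Then $\Delta w = -\Delta h^t - \Delta f \ge 0$ on $\Omega^t$, while $w = -f \le 0$ on $\Gamma^t$ and throughout the contact set. In dimension $n \ge 3$ the argument closes immediately: the two conditions at infinity give $\limsup_{|x|\to\infty} w \le 0$, and the maximum principle for subharmonic functions on the unbounded domain $\Omega^t$ yields $w \le 0$ throughout, hence $\tilde u^t \le f$ on all of $\R^n$. In dimension $n = 2$ the infinity conditions only provide a control on the ratio $w/(-\log|x|)$. The key observation here is that, given the assumption $\lim h^t/(-\log|x|) < c^t$, the conditions $f \ge 0$ and $\lim (f+h^t)/(-\log|x|) \ge c^t$ together force the leading logarithmic coefficient of $f + h^t$ at infinity to equal that of $u^t$, so that $w = o(\log|x|)$. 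A Phragm\'{e}n--Lindel\"{o}f-type argument on the exterior of a large ball then closes the argument: for every $\epsilon > 0$, the auxiliary function $w - \epsilon \log|x - x_0|$, with $x_0$ a fixed point in the interior of the (bounded) contact set, is subharmonic on $\Omega^t$ (since $\log|x - x_0|$ is harmonic on $\Omega^t$ when $n=2$), has bounded values on $\Gamma^t$, and tends to $-\infty$ at infinity; the maximum principle together with sending $\epsilon \to 0$ pointwise gives $w \le 0$.

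The main obstacle is precisely this two-dimensional infinity analysis: on unbounded planar domains, subharmonic functions may grow logarithmically, and an ordinary maximum principle does not close the argument without first identifying the leading logarithmic asymptotic. The use of the admissibility conditions to pin down the log coefficient, followed by a Phragm\'{e}n--Lindel\"{o}f comparison with $\log|x - x_0|$ for $x_0$ in the interior of the contact set, is the content of the sketch that would appear in the appendix.
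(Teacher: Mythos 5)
Your route is genuinely different from the paper's. The appendix argues in the Perron style: it checks that the admissible class is nonempty, that the infimum $f_*$ is superharmonic, and that $f_*$ is harmonic wherever it sits strictly above the obstacle (via a harmonic-replacement contradiction), so that $f_*$ solves the obstacle problem and coincides with $u^t$ by uniqueness; note that in doing so the appendix quietly replaces the inequality at infinity by the \emph{equality} $\lim f_*/(-\log|x|)=c^t$. You instead argue by direct comparison: $\tilde u^t$ is itself admissible, and every admissible $f$ dominates $\tilde u^t$ by the maximum principle. Your Step 1 and the $n\ge 3$ half of Step 2 are correct and this is a clean, arguably more elementary alternative. (In Step 1 you do not need Proposition~\ref{blank} --- which would risk circularity, since its proof uses the ordering of the sets $\Omega^t$ that this very lemma provides --- because $\Delta u^t\le 0$ holds everywhere by the obstacle problem, whence $\Delta \tilde u^t\le -\Delta h^t$ on all of $\R^n$ in one stroke.)

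The gap is in the planar case, at the step where you claim that admissibility ``forces the leading logarithmic coefficient of $f+h^t$ at infinity to equal that of $u^t$.'' It does not. The condition in \eqref{tildeu} is the one-sided bound $\lim (f+h^t)/(-\log|x|)\ge c^t$, and the information coming from $f\ge 0$ (namely $f/(-\log|x|)\le 0$ for $|x|>1$, hence $\limsup (f+h^t)/(-\log|x|)\le \lim h^t/(-\log|x|)$) squeezes the coefficient $a:=\lim (f+h^t)/(-\log|x|)$ into an interval whose left endpoint is $c^t$; it does not give $a=c^t$ (with the paper's inequality $\lim h^t/(-\log|x|)<c^t$ taken literally it even makes the class empty). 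If $a>c^t$ then $w=\tilde u^t-f=u^t-(f+h^t)$ grows like $(a-c^t)\log|x|\to+\infty$, your Phragm\'en--Lindel\"of comparison cannot close, and indeed the conclusion $f\ge\tilde u^t$ fails near infinity. Concretely, take $x_0$ interior to the contact set, $\epsilon>0$ small and a suitable constant $C_\epsilon$, and set $f:=\tilde u^t+\epsilon P(\cdot-x_0)+C_\epsilon$ (mollifying $\delta_{x_0}$ if one insists on smoothness): then $f\ge 0$, $\Delta f=\Delta\tilde u^t-\epsilon\delta_{x_0}\le-\Delta h^t$, and $\lim (f+h^t)/(-\log|x|)=c^t+\tfrac{\epsilon}{2\pi}\ge c^t$, yet $f-\tilde u^t=-\tfrac{\epsilon}{2\pi}\log|x-x_0|+C_\epsilon\to-\infty$. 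So the one-sided condition at infinity admits competitors carrying excess logarithmic mass, and the comparison only closes once the coefficient is prescribed exactly --- which is what the paper's reformulation does. To repair your argument you must either impose the equality at infinity (matching the statement the paper actually proves) or add an argument excluding such competitors; the squeeze you invoke does neither.
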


Note that in particular $f= \tilde u^0$ satisfies \eqref{tildeu} since $\Delta u^0 = -\Delta h^0 \le -\Delta h^t$, and 
$$\lim_{x\to \infty} (\tilde u^0 +h^t) =  \lim_{x\to \infty} (\tilde u^0 +h^0)+\lim_{x\to \infty} (h^t-h^0)\ge  c^0 \ge c^t,$$ resp. 
$$\lim_{x\to \infty}\frac{ (\tilde u^0 +h^t) }{-\log |x|} \ge c(t).$$ 
 Therefore, applying Lemma \ref{lemcharinf} we obtain
$\tilde u^0 \ge \tilde u^t$ and  
\[\Omega^t = \{\tilde u^t >0\} \subset \{\tilde u^0 >0\} = \Omega^0\] 
for all $t>0$.
Equivalently  \eqref{orderingassump} implies that $\eta^t\ge 0$ on $\mathcal Z$ for $t>0$.

Later, when we prove Theorem \ref{thm1}, we will reduce to this case by decomposing $h^t$ as a sum of two functions, one  with nonnegative Laplacian and one with nonpositive Laplacian.

Let us define
\begin{equation}\label{tildev}
v^t := \delta_t \tilde u^0 = \frac 1 t  (\tilde u^t - \tilde u^0).
\end{equation}
The function $v^t$ is a solution of
\begin{equation}\label{vt}
\begin{cases}
 \Delta v^t  = -\Delta \delta_t h^0 \quad &\mbox{in }\Omega^t
\\
v^t =  -\frac 1 t  \tilde u^0   \quad &\mbox{on }\Gamma^t
\\
\lim_{x\to \infty} v^t = \delta_t c^0,\quad \text{resp. }  \lim_{x\to \infty} \frac{v^t(x)}{-\log |x|}= \delta_t c^0.
\end{cases}
\end{equation}

Since $\tilde u^0= | \nabla \tilde u^0| = 0$ on $\Gamma^0$, using  the classical estimate\footnote{Since $u^0$ is a  solution of the obstacle problem in  the whole $\R^n$ with a semiconcave obstacle $h^0$, $u^0$ is semiconcave with $D^2 u^0\ge -\|h\|_{C^{1,1}(\R^n)} {\rm Id}$ and the estimate follows using $\Delta u^0 =0$ where $u^0>h^0$.}
\[\|u^0\|_{C^{1,1}(\R^n)} \le (n-1) \|h^0\|_{C^{1,1}(\R^n)},
\] 
we obtain
\[
|\tilde u^0|\le C  \|h\|_{C^{1,1}(\R^n)} \,d_{\rm Hausdorff}^2(\Gamma^t,\Gamma^0) \le C t^2\quad\mbox{on }\Gamma^t.
\] 
Then, using that $\Omega^t$ grows to $\Omega^0$ as $t\downarrow 0$ and uniform estimates for $v^t$ we find that  $v^t\rightarrow v$ as $t\downarrow 0$, where $v$ is the solution of
\begin{equation}\label{eqnv}
\begin{cases}
\Delta  v  = -\Delta\dot h^0 \quad &\mbox{in }\Omega^0
\\
v =  0 &\mbox{on }\Gamma^0
\\
v (\infty)=  \dot c^0,   & \quad \text{resp. }  \lim_{x\to \infty} \frac{v(x)}{-\log |x|}= \dot c^0.
\end{cases}
\end{equation}
Here $ \Delta \dot h^0= \lim_{t\downarrow 0} \Delta \delta_t h^0  = (\Delta \partial_t  h)(0,x)$.  
%To see that $v$ attains continuously its boundary values we may use a standard barrier argument ---note that  since $\Omega^t\subset \Omega^0$ the barriers for $\Omega^0$ are automatically barriers for  $\Omega^t$.

%%
%%
%%
%%

\subsection{Equation and estimate for $\dot \eta^0$}

We first prove the following
\begin{proposition}\label{apriori1}
Let $k\ge 1$. Assume that for some  $t_m\downarrow 0$ there exists $\dot\eta^0\in C^{0}(\mathcal Z)$  such that 
\[\delta_{t_m}\eta^0\rightarrow \dot\eta^0\quad \mbox{in }C^{0}(\mathcal Z) \qquad \mbox{as } m \to \infty. \]

Then, the limit $\dot\eta^0$  is given by 
\begin{equation}\label{eqndoteta0}
 \dot \eta^0(z)  =  \left( \frac{\partial_N v}{ (N\cdot \nu^0)^2\Delta h^0 }\right) (z,\eta^0(z)) ,
\end{equation}with $v$ as in \eqref{eqnv}.
As a consequence, $\dot \eta^0$  is independent of the sequence $t_m$  and we have $\dot\eta\in C^{k-1,\alpha}(\mathcal Z)$  with the estimate
\begin{equation}\label{estdoteta0}
 \|\dot \eta^0\|_{C^{k-1,\alpha}(\mathcal Z)} \le C(\boldsymbol{\mathcal C^0}) \big(\|\dot h^0\|_{C^{k,\alpha}(B_{\RR})} + |\dot c^0|\big).
 \end{equation}
\end{proposition}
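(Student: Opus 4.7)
The argument splits into two stages: first, derive the pointwise identity \eqref{eqndoteta0} characterizing $\dot\eta^0$; then infer the $C^{k-1,\alpha}$ regularity and the quantitative bound \eqref{estdoteta0} from elliptic/potential-theoretic regularity of the exterior Dirichlet problem \eqref{eqnv} for $v$.

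For the first stage, fix $z_0\in\mathcal Z$ and set $x_0:=(z_0,\eta^0(z_0))\in\Gamma^0$. Since $\Delta h^0(x_0)\le-\rrho<0$, the point $x_0$ is a regular free-boundary point, and both $\tilde u^0$ and $\tilde u^{t_m}$ admit the classical half-space parabolic blow-ups there: setting $V_n:=(N\cdot\nu^0)(x_0)\,\dot\eta^0(z_0)$,
\[
t_m^{-2}\,\tilde u^0(x_0+t_m y)\to \tfrac12(-\Delta h^0(x_0))\bigl[y\cdot\nu^0\bigr]_+^2,\quad t_m^{-2}\,\tilde u^{t_m}(x_0+t_m y)\to \tfrac12(-\Delta h^0(x_0))\bigl[y\cdot\nu^0-V_n\bigr]_+^2.
\]
Since $v^{t_m}=t_m^{-1}(\tilde u^{t_m}-\tilde u^0)$, subtracting these expansions on the inner half-space $\{y\cdot\nu^0>V_n\}$ shows that $t_m^{-1}v^{t_m}(x_0+t_m\,\cdot)$ converges to an affine function with slope $\Delta h^0(x_0)\,V_n$ in the $\nu^0$-direction. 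On the other hand, $v^{t_m}\to v$ in the bulk of $\Omega^0$ and $v\in C^1(\overline{\Omega^0})$ with $v|_{\Gamma^0}\equiv 0$, so $t_m^{-1}v(x_0+t_m y)\to\partial_{\nu^0}v(x_0)(y\cdot\nu^0)$. Matching the linear parts for $y\cdot\nu^0\gg V_n$ (outside the $O(t_m)$ boundary layer at $\Gamma^{t_m}$) forces
\[
\Delta h^0(x_0)\,V_n=\partial_{\nu^0}v(x_0).
\]
Using that $v\equiv 0$ on $\Gamma^0$ makes the boundary gradient of $v$ purely normal, so $\partial_N v=(N\cdot\nu^0)\,\partial_{\nu^0}v$, together with the definition of $V_n$, gives \eqref{eqndoteta0}; in particular $\dot\eta^0$ is independent of the sequence $t_m$.

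For the second stage, Proposition~\ref{blank}(i) gives $\Gamma^0\in C^{k,\alpha}_{\rrho/4}$, hence $\nu^0\in C^{k-1,\alpha}(\Gamma^0)$. Applying the single- and double-layer potential estimates of Appendix~\ref{app} (alternatively, the classical Schauder theory for the exterior Dirichlet problem) to \eqref{eqnv}, and using $\dot h^0\in C^{k,\alpha}(B_\RR)$ together with $\dot c^0\in\R$, yields $v\in C^{k,\alpha}(\overline{\Omega^0})$ together with
\[
\|v\|_{C^{k,\alpha}(\overline{\Omega^0})}\le C(\boldsymbol{\mathcal C^0})\bigl(\|\dot h^0\|_{C^{k,\alpha}(B_\RR)}+|\dot c^0|\bigr).
\]
By \eqref{rho1} and \eqref{defdelta0}, the denominator $(N\cdot\nu^0)^2\,\Delta h^0$ belongs to $C^{k-1,\alpha}(\Gamma^0)$ and is bounded away from zero by a constant depending only on $\boldsymbol{\mathcal C^0}$; combined with $\partial_N v|_{\Gamma^0}\in C^{k-1,\alpha}$, the formula \eqref{eqndoteta0} then yields \eqref{estdoteta0}.

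The main technical subtlety is the matching step: $v^{t_m}$ lives on the shrinking domain $\Omega^{t_m}$ while $v$ lives on $\Omega^0$, and they disagree on $\Gamma^{t_m}$ by an $O(t_m)$ boundary layer generated by the motion of the free boundary. The parabolic blow-up at a regular point of $\Gamma^0$ isolates this layer and shows that only the linear (and not the additive constant) in the inner limit must match the outer normal derivative of $v$; naively equating $v^{t_m}(x_{t_m})$ with $v(x_{t_m})$ at $x_{t_m}\in\Gamma^{t_m}$ would produce a spurious factor of two.
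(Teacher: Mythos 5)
Your proof of \eqref{eqndoteta0} is, in substance, the same computation the paper performs, recast in blow-up language. Both arguments hinge on the second-order vanishing $\tilde u^t=|\nabla\tilde u^t|=0$ on $\Gamma^t$ together with $D^2\tilde u^t=-\Delta h^t\,\nu^t\otimes\nu^t$ from the $\Omega^t$ side, Taylor-expanded at the scale $t_m$. The paper writes this as the boundary relation $\partial_s v^{t_m}=-\tfrac1{t_m}\partial_s\tilde u^0$ on $\Gamma^{t_m}$ and expands $\partial_s\tilde u^0$ along $N$; you reach the same identity by blowing up $\tilde u^0$ and $\tilde u^{t_m}$ at $x_0$ at scale $t_m$ and subtracting. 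Your Step 2 (Schauder estimate for $v$, plus regularity of $\nu^0$, $\pi_1$, $\eta^0$, $\Delta h^0$ and the lower bound on $-\Delta h^0$) is essentially the paper's Step 2.

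The one place your write-up is short of a complete argument is the matching step. The phrase ``$v^{t_m}\to v$ in the bulk of $\Omega^0$'' (interior convergence) does not by itself justify equating the inner slope of $t_m^{-1}v^{t_m}(x_0+t_m\cdot)$ with $\partial_{\nu^0}v(x_0)$: what is actually needed is the gradient limit $\nabla v^{t_m}(x_0+t_m y)\to\nabla v(x_0)$ at points approaching $\Gamma^0$ at rate $t_m$, i.e.\ from well inside the boundary layer. The paper proves exactly this as \eqref{star1}, by combining the uniform $C^{1,\alpha}(\overline{\Omega^{t}})$ estimates for $v^t$ (coming from \eqref{vt} and the uniform smoothness of $\Gamma^t$) with interior convergence and the uniform continuity of $\nabla v$ on $\overline{\Omega^0}$. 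You gesture at the issue in your closing paragraph, but without the boundary gradient estimate for $v^{t_m}$ the matching cannot be closed, since interior elliptic estimates alone do not control $\nabla v^{t_m}$ at distance $O(t_m)$ from $\Gamma^{t_m}$. Once \eqref{star1} is supplied your blow-up computation reproduces \eqref{crucialrelation4}, and the remainder of the argument is correct.
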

\begin{proof} We split the proof in two steps. 

{\em Step 1.} We prove  \eqref{eqndoteta0}.
Recall that since $\tilde u^t$ is a solution of a zero obstacle problem we have 
\[\tilde u^t =| \nabla \tilde u^t|= 0\quad \mbox{on }\Gamma^t.\] 
Thus,
\begin{equation}\label{crucialrelation1}
 \partial_{\s} v^t  =  \frac 1 t \big( \partial_{\s} \tilde u^t - \partial_{\s} \tilde u^0 \big)  = -\frac{ \partial_{\s} \tilde u^0}{t} \quad \mbox{on }\Gamma^t.
 \end{equation}

From \eqref{crucialrelation1} we deduce that 
 \begin{equation}\label{crucialrelation2}
 \partial_{\s} v^{t_m}(z,\eta^{t_m})  = - \frac{1}{t_m}\partial_{\s} \tilde u^0(z,\eta^{t_m}) = - \frac{1}{t_m} \big( \partial_{\s} \tilde u^0(z,\eta^0) +   \partial_{\s\s} \tilde u^0(z,\eta^0) (\eta^{t_m}-\eta^0) + o(t_m)\big)
 \end{equation}
 where $\eta^0$ and $\eta^{t_m}$ are evaluated at $z$ (although we omit this in the notation) and 
 where  $\partial_{\s\s} \tilde u^0(z,\eta^0)$ is understood as the limit from the $\Omega^0$ side. 
 To justify the validity of the previous Taylor expansion we  use that $\tilde u^0 \in C^{2,\alpha}(\overline{\Omega^0})$, see Lemma \ref{lemhigherregtildeu}.
 
Since $\tilde u^0= |\tilde \nabla u^0| = 0$ on $\Gamma^0$ we obtain 
\[
\partial_{ee}  \tilde u^0 = (e\cdot \nu)^2 \partial_{\nu\nu} \tilde u^0 = (e\cdot \nu)^2 \Delta \tilde u^0 = -(e\cdot \nu)^2\Delta h^0 \quad \mbox{on }\Gamma^0 
\]
for every vector $e$, where  $\nu=\nu^0$ is the normal vector to $\Gamma^0$ (pointing towards $\Omega^0$). 
Again, the previous second derivatives on $\Gamma^0$ mean the limits from the $\Omega^0$ side.
Hence, we have
 \begin{equation}\label{crucialrelation3}
 \partial_{\s} \tilde u^0(z,\eta^0(z))=0 \qquad \mbox{and }\qquad   \partial_{\s\s} \tilde  u^0(z,\eta^0(z))  =  -\big((N\cdot \nu^0)^2\Delta h^0 \big)(z,\eta^0(z)) 
 \end{equation}
where $  \partial_{\s\s} \tilde u^0 (z,\eta^0(z))$ is from the $\Omega^0$ side. 
Dividing \eqref{crucialrelation3} by $t_m$ and taking the limit as $t_m\downarrow 0$ in \eqref{crucialrelation2} using the assumption,  we obtain
 \begin{equation}\label{crucialrelation4}
 \partial_{\s} v(z,\eta^0(z))  =   -\partial_{\s\s} \tilde u^0(z,\eta^0(z)) \,\dot\eta^0(z) =  \big((N\cdot \nu^0)^2\Delta h^0 \big)(z,\eta^0(z))  \,\dot\eta^0(z)
 \end{equation}
 where $\partial_{\s} v(z,\eta^0(z))$ and  $\partial_{\s\s} \tilde u^0(z,\eta^0(z))$ are  from the $\Omega^0$ side.
When computing the limit that yields \eqref{crucialrelation4} we must check that 
\begin{equation}\label{star1}
 \partial_{\s} v^{t_m}(z,\eta^{t_m}(z)) \rightarrow  \partial_{\s} v(z,\eta^0(z)),
 \end{equation}
  where $\partial_{\s} v(z,\eta^0)$ is from the $\Omega^0$ side. To prove this, note that the equation \eqref{vt} for $v^t$ with the uniform $C^{1,\alpha}$ estimates for the boundaries $\Gamma^t$ imply that $\|\nabla v^t\|_{C^{0,\alpha}(\overline{\Omega^t})}$  is uniformly bounded (for $t>0$ small). This implies that $\nabla v^t$ converges uniformly to $\nabla v$ in every compact set of $\Omega^0$. Then using the uniform continuity of the derivatives of $v$ on $\overline{\Omega^0}$ we show that 
\[
\lim \nabla v^{t_p} (x_p) \rightarrow \nabla v(x) \quad \mbox{as }p\to \infty \quad \mbox{whenever }t_p\downarrow 0 , \ x_p\to x\ \mbox{ and }x_p\in \overline{\Omega^{t_p}}.
\]
This establishes \eqref{star1} and  \eqref{crucialrelation4}. Then,  \eqref{eqndoteta0} follows immediately form \eqref{crucialrelation4}, after recalling that $N= \partial_{\s}$.

{\em Step 2.} We prove \eqref{estdoteta0}.
Indeed, from \eqref{eqnv}, and using that $\Gamma^0 =\partial \Omega^0\in C^{k,\alpha}_{\rrho/4}$ with norm universally bounded we obtain that 
\begin{equation}\label{estv}
%\begin{split}
\|v\|_{C^{k,\alpha}(\Omega^0)}\le C(\boldsymbol{\mathcal C^0}) \big(\|\Delta \dot h^0\|_{C^{k-2,\alpha}(\Omega^0)} + |\dot c^0|\big)
\le  C(\boldsymbol{\mathcal C^0}) \big(\| \dot h^0\|_{C^{k,\alpha}(\Omega^0)} + |\dot c^0|\big).
%\end{split}
\end{equation}

Now recalling that $N$ is smooth, that $\|\nu^0\|_{C^{k-1,\alpha}(\Gamma^0)} \le  C \|\Gamma^0\|_{C^{k,\alpha}_{\rrho/4}} \le C$, that 
$-\Delta h^0\ge \rrho$, and that $\|\eta^0\|_{C^{k,\alpha}(\mathcal Z)}\le C$,  \eqref{eqndoteta0}  and \eqref{estv} imply  \eqref{estdoteta0}.
\end{proof}

\subsection{Equation and estimate for $\ddot \eta^0$}

In this section we estimate the second derivative in $t$ of $\eta$ at $t=0$. It is convenient to introduce here the following notation, that we shall use throughout the paper. Given a function  $f:(-t_\circ, t_\circ)\times Y\to \R$  recall the notation  $f= f^t(y) = f(t,x)$. 
Let us also denote
\[
\delta_t^2 f^s :=  2\frac{\delta_tf^s -\dot f^s}{t}   \qquad \mbox{and} \qquad \ddot f^s :=  \lim_{t\downarrow 0}\delta_t^2 f^s =  \partial_{tt} f(y,0).
\]

From now on let us consider $v$ to be defined in all of $\R^n$ by extending the solution of \eqref{eqnv} by $0$ in $\R^n\setminus \Omega^0$. Note that this is consistent with $v= \lim_{t\downarrow0} v^t $ and  $v^t= \delta_t \tilde u^0 =0$ in $\R^n\setminus \Omega^0$ (since both $\tilde u^t$ and $\tilde u^0$ vanish there).

We now introduce the function, defined in all of $\R^n$, 
\[
w^t := \delta_t v^0 = \frac 1 t \big( v^t-v \big) = \frac 1 2 \delta_t^2 \tilde u ^0.
\]

Using \eqref{eqnv} and the following identity 
\[\Delta v^t= \frac{1}{t} \Delta (\tilde u^t-\tilde u^0)=- \frac{1}{t}\Delta \tilde u^0 = \frac{1}{t}\Delta h^0 \quad \mbox{in } \Omega^0\setminus \Omega^t\]
we find, in the distributional sense,
\begin{equation}\label{eqnwt}
\begin{cases}
\Delta w^t  =  \frac 1 t \left(\frac{\partial_Nv}{N\cdot \nu^0} \, \mathcal{H}^{n-1}\restriction_{\Gamma^0} + \left(\frac 1 t \Delta h^0 - \Delta \dot h^0\right)\, \chi_{\Omega^0\setminus \Omega^t} \right) - \frac 1 2 \Delta \delta_t^2 h^0 \chi_{\Omega^t}\quad & \mbox{in }\R^n
\vspace{5pt}
\\
w^t (\infty)=  \frac 1 2  \delta^2_t c^0,  \quad \text{resp.} \ \lim_{x\to \infty} \frac{w^t}{-\log |x|}=  \frac 1 2  \delta^2_t c^0
\end{cases}
\end{equation}
where $\mathcal{H}$ denotes the Hausdorff measure. 
Indeed,  note also that for $\nu = \nu^0$ we have
\[
 \partial_{N} v = (N\cdot \nu^0) \partial_\nu v\quad\mbox{on }\Gamma^0_{\rm out}\qquad \mbox{while}\qquad \partial_{\nu} v =0\quad  \mbox{on } \Gamma^0_{\rm in}.
\]  
Here,  `` $\Gamma^0_{\rm out}$'' refers to the limit from the $\Omega^0$ side while `` $\Gamma^0_{\rm in}$'' refers to the limit from the $\R^n\setminus\overline{\Omega^0}$ side.
Therefore, $\Delta w^t$ has  some mass concentrated on $\Gamma^0$ which is given by the jump in the normal derivative of $v$, namely, 
\[ \frac{1}{t} \frac{\partial_Nv}{N\cdot \nu^0} \, \mathcal{H}^{n-1}\restriction_{\Gamma^0}.\]

In the following lemma, and throughout the paper, $P$ denotes the Newtonian potential in dimension $n$,  namely:
\[
P(x) = \frac{1}{n(n-2)|B_1|}  |x|^{2-n}\qquad \mbox{resp. } P(x) = -\frac{1}{2\pi} \log |x|.
\]
Recall that $-\Delta  P = \delta_{x=0}$ in the sense of distributions.

We also need to introduce the Jacobian 
\[J(z,\s) := | {\rm det}\,  D\, (z, \s)^{-1}|\]
of the coordinates $(z,\s)$ defined by
\[ \int_A f(x) \,dx = \int_{(z,\s)(A)} f(z,\s) J(z,\s) \, dz\,d\s.\]
We use the following abuse of notation:
\begin{itemize} 
\item when $f=f(x)$ we  denote $f(z,\s)$ the composition $f\circ(z,\s)^{-1}$ ; and conversely,  
\item when $g=g(z,\s)$ we will denote $g(x)$ the composition $g\circ(z,\s)$. 
\end{itemize}

Finally, let us denote
\[ \pi_1 : U_\circ \rightarrow \mathcal Z\]
the projection map along $N$, which is defined in the coordinates $(z,\s)$ by 
\[ (z,\s)\mapsto (z,0).\]

We will need the following 
\begin{lemma} \label{LemmaJacobian}
Given $f:\Gamma^0\rightarrow \R$ continuous we have 
\[
\int_{\Gamma^0} (N\cdot \nu^0)(x) f(x) d\mathcal{H}^{n-1}(x)  = \int_{\mathcal Z} f(z,\eta^0(z)) J(z,\eta^0(z)) dz. 
\]
\end{lemma}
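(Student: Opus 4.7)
\medskip
\noindent\textbf{Proof plan.} The lemma is a change-of-variables identity relating the Cartesian surface measure on $\Gamma^0$ with the parameter measure $dz$ on $\mathcal Z$. I plan to prove it by applying the area formula to the natural parametrization of $\Gamma^0$.

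Let $\Phi := (z,\s)^{-1}: \mathcal Z \times (-\s_\circ,\s_\circ) \to U_\circ$, so that by the definition of $J$ given just before the lemma we have $J(z,\s) = |\det D\Phi(z,\s)|$. Since $\Gamma^0 = \{\s = \eta^0(z)\}$ in these coordinates, the map
\[
F: \mathcal Z \to \Gamma^0, \qquad F(z) := \Phi(z,\eta^0(z))
\]
is a $C^{k,\alpha}$-diffeomorphism (nondegeneracy follows from $N\cdot\nu^0 \ge 1-\varepsilon_o > 0$ as prescribed in \eqref{defdelta0}). Applying the area formula to $F$ with the integrand $g(x) := f(x)(N\cdot\nu^0)(x)$, the lemma reduces to the pointwise identity
\[
|J_F(z)| \, (N\cdot\nu^0)(F(z)) \;=\; J(z,\eta^0(z)) \qquad \text{for all } z \in \mathcal Z,
\]
where $|J_F(z)| := \sqrt{\det(DF^T DF)}$ is the $(n-1)$-dimensional Jacobian of $F$.

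The identity is proved by a determinant manipulation. Since $N = \partial_\s\Phi$, we have $\partial_{z_i}F = \partial_{z_i}\Phi + (\partial_{z_i}\eta^0)\,N$. Using the standard fact that the $(n-1)$-volume spanned by tangent vectors equals the $n\times n$ determinant obtained by appending the unit normal,
\[
|J_F(z)| \;=\; \bigl| \det[\partial_{z_1}F, \ldots, \partial_{z_{n-1}}F, \nu^0] \bigr|.
\]
The $n\times n$ matrix on the right is related by elementary column operations to $M := [\partial_{z_1}\Phi, \ldots, \partial_{z_{n-1}}\Phi, N]$, which satisfies $|\det M| = J(z,\eta^0(z))$. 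Specifically, since $\partial_{z_i}F \cdot \nu^0 = 0$ while $N\cdot\nu^0 > 0$, expanding $\nu^0$ in the basis $\{\partial_{z_1}F,\ldots,\partial_{z_{n-1}}F, N\}$ yields that the coefficient of $\nu^0$ along $N$ is $1/(N\cdot\nu^0)$; combining this with the column manipulation that converts each $\partial_{z_i}\Phi$ into $\partial_{z_i}F$ (which preserves the determinant) produces the factor $(N\cdot\nu^0)^{-1}$ and yields the displayed identity.

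\medskip
\noindent\textbf{Main obstacle.} There is no substantial obstacle; the proof is essentially the area formula plus a linear-algebra computation. The only care is required in checking that $F$ is a genuine bi-Lipschitz parametrization and that the orientations produce the correct absolute value — both handled uniformly by the hypothesis $N\cdot\nu^0 \ge 1-\varepsilon_o > 0$ in \eqref{defdelta0}.
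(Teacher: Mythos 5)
Your proof is correct, but it follows a genuinely different route from the paper's. The paper proves the identity by a thin-slab limiting argument: it extends $f$ continuously to a neighborhood of $\Gamma^0$, writes $\int_{\Gamma^0}(N\cdot\nu^0)f\,d\mathcal H^{n-1}$ as $\lim_{\varepsilon\downarrow 0}\frac1\varepsilon\int_{A^\varepsilon}f\,d\mathcal H^n$ over the slab $A^\varepsilon=\{\eta^0(z)\le \s\le \eta^0(z)+\varepsilon\}$ (the factor $N\cdot\nu^0$ arising geometrically as the normal thickness of a slab of width $\varepsilon$ along the unit field $N$), and then evaluates the volume integral using only the \emph{defining} property of $J$ from the change-of-variables formula. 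You instead apply the area formula to the explicit parametrization $F(z)=\Phi(z,\eta^0(z))$ and establish the pointwise identity $|J_F|\,(N\cdot\nu^0)\circ F=J(\cdot,\eta^0)$ by column operations together with the decomposition $N=(N\cdot\nu^0)\nu^0+T$ with $T$ tangent to $\Gamma^0$; your linear algebra checks out (subtracting $(\partial_{z_i}\eta^0)N$ from each column reduces $[\partial_{z_1}F,\dots,\partial_{z_{n-1}}F,N]$ to $[\partial_{z_1}\Phi,\dots,\partial_{z_{n-1}}\Phi,N]$, and replacing the last column $N$ by $\nu^0$ costs exactly the factor $N\cdot\nu^0$). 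Your version buys a pointwise, parametrization-level identity (strictly more information than the integral statement), needs no continuous extension of $f$ off $\Gamma^0$, and makes the appearance of $N\cdot\nu^0$ algebraically transparent; the paper's version avoids choosing any parametrization of $\mathcal Z$ and any determinant computation, at the price of an $\varepsilon\to0$ limit and of taking the slab-thickness fact for granted. One small point of care in your write-up: since $\mathcal Z$ is itself a hypersurface rather than a domain in $\R^{n-1}$, both $|J_F|$ and $|\det D\Phi|$ should be understood relative to a local parametrization of $\mathcal Z$ (the common factor $\sqrt{\det(Dz^TDz)}$ cancels in the identity), but this is routine and does not affect the argument.
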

\begin{proof}
Let us assume without loss of generality that $f$ is defined and continuous in the neighborhood $U_\circ$ of $\Gamma^0$.
Given $\varepsilon>0$ let $$A^\varepsilon: = \{x\in U_\circ\ :\  \eta^0(z(x)) \le s(x) \le \eta^0(z(x))+\varepsilon\}.$$
Recalling that $N= \partial_s$ and that $|N| =1$,  we have
\[
\int_{\Gamma^0} (N\cdot \nu^0)(x) f(x) d\mathcal{H}^{n-1}(x)  =\lim_{\varepsilon \downarrow 0}  \frac 1\varepsilon  \int_{A^{\varepsilon}} f(x) d\mathcal{H}^n(x).
\]

On the other hand, for $(z,s)(A^\varepsilon) : = \{(z,\s)\in\mathcal Z\times (-s_\circ,s_\circ):\  \eta^0(z) \le \s \le \eta^0(z)+\varepsilon\}$ we have, by definition of $J$,
\[
\begin{split}
\frac 1\varepsilon \int_{A^{\varepsilon}} f(x) d\mathcal{H}^n(x)& = \frac 1\varepsilon \int_{(z,s)(A^\varepsilon)} f(z,\s) J(z,\s) dz\,d\s 
\\
&=  \int_{\mathcal Z} dz\, \frac 1\varepsilon \int_{0}^{\varepsilon} d\bar s \,f(z,\eta^0(z)+\bar s) J(z,\eta^0(z)+\bar s)
\\
&= \int_{\mathcal Z} f(z,\eta^0(z)) J(z,\eta^0(z))\, dz + o(1)
\end{split}
\]
as $\varepsilon\downarrow 0$ and the lemma follows.
\end{proof}

%We will need the following 
%\begin{lemma}\label{LemmaJacobian}
%For any function $f$, we have
%\begin{equation}
%\int_{\mathcal Z} f(z) J(z,\eta^0(z)) dz = {\int_{\Gamma^0}} (N\cdot \nu^0) (\dot \eta^0\circ\pi_1)\,\Delta\dot h^0\, f \, d\mathcal{H}^{n-1}.\end{equation}%
%\end{lemma}
  
\begin{lemma} \label{distlimit}
Let $k\ge 2$.  Assume that for some $t_m\downarrow 0$ there exist $\dot\eta, \ddot\eta\in C^{0}(\mathcal Z)$ such that  
\[ \delta^2_{t_m} \eta^0 = 2\frac{\eta^{t_m} -\eta^0- \dot \eta^0 t_m}{t_m^2} \rightarrow \ddot \eta^0 \quad\mbox{in }C^{0}(\mathcal Z)\] 
as $t_m\downarrow 0$. 
Then, 
\[
 w^{t_m} \quad   \xrightarrow{\rm \ weakly\ }  \quad  w \quad \mbox{in } \R^n
\]
where $w$ can be decomposed as 
\begin{equation}\label{decompw}
w = w_{\rm solid} + w_{\rm single} + w_{\rm double}+ w_{\rm implicit} +  \mbox{constant}
\end{equation} 
for
\begin{equation}\label{solid}
 w_{\rm sol.}(x) =   \int_{\R^n } d\mathcal H^n(y) \big(\Delta \ddot h^0\, \chi_{\Omega_0}  \big)(y) P(x-y),
\end{equation}
\begin{multline}\label{single}
w_{\rm sin.}(x)
 :=   \int_{\Gamma_0} d\mathcal{H}^{n-1}(y)\, \bigg(  (N\cdot\nu^0) (\dot \eta^0\circ\pi_1)\,\Delta\dot h^0\, \\- \frac 1 2 (\dot \eta^0\circ \pi_1)^2  \frac{N\cdot\nu^0}{J} \partial_N\, \big(J \Delta h^0)\bigg)(y) \,P(x-y),
\end{multline}
\begin{equation}\label{double}
w_{\rm dou.}(x) :=   \int_{\Gamma_0} d\mathcal{H}^{n-1}(y)  \frac 1 2 \left( (\dot \eta^0\circ \pi_1)^2  \frac{N\cdot\nu^0}{J} \, \big(J \Delta h^0)\right)(y) \, \partial_N P(x-y),
\end{equation}
\begin{equation}\label{implicit}
w_{\rm imp.}(x) :=   \int_{\Gamma_0} d\mathcal{H}^{n-1}(y) \frac{\Theta}{(N\cdot \nu) }  (y)\, P(x-y),
\end{equation}
where $\Theta : \Gamma^0 \rightarrow \R$
\begin{equation}\label{defTheta}
\Theta  :=  \frac 1 2 (N\cdot \nu^0)^2 \Delta h^0 \, (\ddot \eta^0 \circ \pi_1).
\end{equation}
\end{lemma}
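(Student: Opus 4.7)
The plan is to represent $w^{t_m}$ as the Newtonian potential of its distributional Laplacian, writing $w^{t_m}(x) = -\int_{\R^n} P(x-y)\,\Delta w^{t_m}(y)\,dy + C_{t_m}$ with $C_{t_m}$ encoding the behavior at infinity coming from $\tfrac12 \delta_{t_m}^2 c^0$. I would then take $t_m\downarrow 0$ in each of the four pieces of the right-hand side of \eqref{eqnwt} separately. The main mechanism is a cancellation: the divergent $1/t_m$ part of the thin-layer term $\tfrac{1}{t_m^2}\Delta h^0\,\chi_{\Omega^0\setminus\Omega^{t_m}}$ exactly balances the single-layer term $\tfrac{1}{t_m}\tfrac{\partial_N v}{N\cdot\nu^0}\mathcal H^{n-1}\restriction_{\Gamma^0}$, after which the remaining limits produce $w_{\rm sol}+w_{\rm sin}+w_{\rm dou}+w_{\rm imp}$ plus a constant.

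To realize the cancellation, I would Taylor-expand the thin-layer integrals in the coordinates $(z,s)$. Writing $\Omega^0\setminus\Omega^{t_m}=\{(z,s):\eta^0(z)\le s \le \eta^{t_m}(z)\}$ and using the assumed expansion $\eta^{t_m}-\eta^0 = \dot\eta^0 t_m + \tfrac12 \ddot\eta^0 t_m^2 + o(t_m^2)$ in $C^0(\mathcal Z)$, a Taylor expansion of $(fJ)(z,\cdot)$ around $\eta^0(z)$ yields
\[
\tfrac{1}{t_m^2}\!\int_{\Omega^0\setminus\Omega^{t_m}}\! f\,dy = \tfrac{1}{t_m}\!\int_{\mathcal Z}\!\dot\eta^0 (fJ)(z,\eta^0)\,dz + \tfrac{1}{2}\!\int_{\mathcal Z}\!\bigl[\ddot\eta^0(fJ) + (\dot\eta^0)^2\partial_s(fJ)\bigr](z,\eta^0)\,dz + o(1).
\]
Applying this with $f = \Delta h^0\,P(x-\cdot)$ and combining with Lemma \ref{LemmaJacobian}, the $1/t_m$ part becomes $\tfrac{1}{t_m}\int_{\Gamma^0}(N\cdot\nu^0)(\dot\eta^0\circ \pi_1)\Delta h^0\,P(x-\cdot)\,d\mathcal H^{n-1}$, which, by the a priori identity $\partial_N v = (N\cdot\nu^0)^2\Delta h^0(\dot\eta^0\circ\pi_1)$ on $\Gamma^0$ from Proposition \ref{apriori1}, is exactly the single-layer contribution from \eqref{eqnwt}; with the sign from the Newton potential these two divergent pieces cancel.

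The remaining finite residuals are then matched to the four pieces of $w$: the bulk term $-\tfrac12 \Delta\delta_{t_m}^2 h^0\,\chi_{\Omega^{t_m}}$ gives $w_{\rm sol}$ (using $\delta_{t_m}^2 h^0\to \ddot h^0$ and $\chi_{\Omega^{t_m}}\to\chi_{\Omega^0}$ a.e.); the leading term of $-\tfrac{1}{t_m}\Delta\dot h^0\,\chi_{\Omega^0\setminus\Omega^{t_m}}$ contributes the first part of $w_{\rm sin}$ via Lemma \ref{LemmaJacobian}; the $\ddot\eta^0$-residual from $\tfrac{1}{t_m^2}\Delta h^0\,\chi_{\Omega^0\setminus\Omega^{t_m}}$ produces $w_{\rm imp}$ by definition of $\Theta$; and the remaining $(\dot\eta^0)^2\partial_s(\Delta h^0\,J\,P(x-\cdot))\bigr|_{s=\eta^0}$ piece splits into the second part of $w_{\rm sin}$ (from the $\partial_s(\Delta h^0\,J)$ factor) and $w_{\rm dou}$ (from the kernel identity $\partial_s P(x-(z,s))=-\partial_N P(x-y)$ with $y=(z,s)$, which converts the $s$-derivative of the second factor of $P$ into the normal derivative appearing in the double-layer kernel). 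The main obstacle is the rigorous justification of these limits near $\Gamma^0$: the convergences rely on the uniform $C^{1,\alpha}$ bounds on $\Gamma^{t_m}$ from Proposition \ref{blank} and on $v\in C^{1,\alpha}(\overline{\Omega^0})$ from elliptic regularity, so that $\partial_N v$, $J$ and their $\partial_s$-derivatives on $\Gamma^0$ admit the uniform control needed to take limits under the integrals; in dimension $n=2$ one must additionally track the logarithmic behavior at infinity carefully so that $C_{t_m}\to \tfrac12\ddot c^0$ produces the correct constant in the decomposition \eqref{decompw}.
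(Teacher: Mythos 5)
Your proposal is correct and follows essentially the same route as the paper: test $\Delta w^{t_m}$ against smooth functions (equivalently, convolve with $P$), use Lemma \ref{LemmaJacobian} to convert the thin-layer integrals over $\Omega^0\setminus\Omega^{t_m}$ into surface integrals on $\Gamma^0$, invoke the identity $\partial_N v=(N\cdot\nu^0)^2\Delta h^0\,(\dot\eta^0\circ\pi_1)$ from Proposition \ref{apriori1} to cancel the two $O(1/t_m)$ divergent pieces, and read off $w_{\rm sol}$, $w_{\rm sin}$, $w_{\rm dou}$ (via Leibniz on $\partial_s(J\Delta h^0\,P)$) and $w_{\rm imp}$ from the second-order residuals. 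The only difference is organizational — you perform a single second-order Taylor expansion of $\int_{\eta^0}^{\eta^{t_m}}(fJ)\,ds$, whereas the paper splits the same integral at $\eta^0+\dot\eta^0 t_m$ into two pieces $I_1+I_2$ — which amounts to the same computation.
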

\begin{proof}
Define
\[
\mathcal D^{t} :=  \Delta w^{t} = \frac 1 t \frac{\partial_N\,v}{N\cdot \nu^0} \, \mathcal{H}^{n-1}\restriction_{\Gamma^0}  - \left(\frac{1} {t^2} \Delta h^0 + \frac{1}{t}\Delta \dot h^0\right) \chi_{\Omega^0 \setminus \Omega^t} - \frac 1 2 \Delta \delta_t^2 h^0 \chi_{\Omega^t}.
\]

Let us show that $\mathcal D^{t_m} \rightarrow \mathcal D$  in the sense of distributions, for some distribution $\mathcal D$ that we compute.

Let us first  write 
\[ 
\mathcal D^t = \mathcal D^t_1 +\mathcal D^t_2
\]
where 
\begin{equation}\label{Dt1}
\mathcal D^t_1  :=    -\frac 1  t  \Delta \dot h^0 \, \chi_{\Omega^0\setminus \Omega^t}  - \frac 1 2 \Delta \delta_t^2 h^0 \chi_{\Omega^t}
\end{equation}
and 
\begin{equation}\label{Dt2}
\mathcal D^t_2  :=    \frac 1 t \bigg(\frac{\partial_N v}{ N\cdot \nu^0}\, \mathcal{H}^{n-1}\restriction_{\Gamma^0} - \frac 1 t \Delta h^0 \, \chi_{\Omega^0\setminus \Omega^t} \bigg).
\end{equation}
First we clearly have, for $\phi\in C^\infty_c(\R^n)$,
\[
\begin{split}
\int \phi(x) &\left(\frac 1  t  \Delta \dot h^0 \, \chi_{\Omega^0\setminus \Omega^t} \right)(x) \, dx =
\frac{1}{t} \int_{\mathcal Z}\int_{\eta^0}^{\eta^t}J(z,\s)  (\Delta\dot h^0\,\phi)(z,\s)\, dz \, d\s   
\\
 &\longrightarrow   \int_{\mathcal Z}\dot\eta^0(z) J(z,\eta^0)  (\Delta\dot h^0\,\phi)(z,\eta^0)\, dz = {\int_{\Gamma^0}} (N\cdot \nu^0) (\dot \eta^0\circ\pi_1)\,\Delta\dot h^0\,\phi \, d\mathcal{H}^{n-1}
\end{split}
\]
as $t=t_m \downarrow 0$, where we have used Lemma \ref{LemmaJacobian} and hence
\begin{equation}\label{AA}
\mathcal D^{t_m}_1  \ \xrightarrow{weakly}  - (N\cdot \nu^0) (\dot \eta^0\circ\pi_1) \,\Delta\dot h^0\,\mathcal{H}^{n-1}\restriction_{\Gamma^0} - \frac 1 2 \Delta \ddot h^0 \chi_{\Omega^0}.
\end{equation}

Next, using \eqref{eqndoteta0}, we compute, for $\phi\in C^\infty_c(\R^n)$,
\begin{equation}\label{BB}
\begin{split}
\int \phi \mathcal D^t_2 &=
\frac 1 t \left(\ \int_{\mathcal Z}dz  \frac{J(z,\eta^0)}{(N\cdot \nu^0)^2 (z,\eta^0)}  \partial_N v(z) \phi(z,\eta^0)  - \ \int_{\mathcal Z}dz    \frac 1 t \int_{\eta^0}^{\eta^t} d\s( J \Delta h^0\, \phi)(z,\s) \right) 
\\&=
 \frac 1 t \int_{\mathcal Z}dz  \left( \big(J \Delta h^0\, \phi   \big) (z,\eta^0)\dot \eta^0    -\frac 1 t \int_{\eta^0}^{\eta^t} d\s( J \Delta h^0\, \phi)(z,\s)\right) 
 \\&=
 I_1 + I_2, 
\end{split}
\end{equation}
where
\[I_1 :=   \frac 1 t \ \int_{\mathcal Z}dz  \left( \big(J \Delta h^0\, \phi   \big) (z,\eta^0)\dot \eta^0-    \frac 1 t \int_{\eta^0}^{\eta^0+\dot\eta^0 t} d\s( J \Delta h^0\, \phi)(z,\s)\right)  \]
and 
\[I_2 := \frac{-1}{t^2}   \int_{\mathcal Z}dz    \int_{\eta^0+\dot\eta^0 t}^{\eta^t} d\s( J \Delta h^0\, \phi)(z,\s). \]

On the one hand, letting $s = \eta^0 + \dot\eta^0 t \bar s $, 
\begin{equation}\label{CC}
\begin{split}
I_1 &=  \int_{\mathcal Z}dz  \int_0^1 \dot \eta^0 (z) t \,d \bar \s    \, \frac{\bar \s}{t}\left(\frac{ \big(J \Delta h^0\, \phi   \big) (z,\eta^0) -   ( J \Delta h^0\, \phi)(z,\eta^0+\dot\eta^0 t \bar\s )}{\bar \s t}\right) 
\\&=  \int_{\mathcal Z}dz  \int_0^1 (\dot \eta^0)^2 (z) \bar \s  d\bar \s\,   \partial_\s  \big(J \Delta h^0\, \phi   \big) (z,\eta^0) + o(1)
\\&=  \int_0^1  \bar \s  d\bar \s  \int_{\mathcal Z}dz  (\dot \eta^0)^2 (z)  \partial_\s  \big(J \Delta h^0\, \phi   \big) (z,\eta^0) + o(1)
\\&=   \frac 1 2 \int_{\Gamma^0}  (N\cdot\nu^0) \, d\mathcal{H}^{n-1}   \,\frac 1 J(\dot \eta^0\circ \pi_1)^2 \,  \partial_N\, \big(J \Delta h^0\, \phi   \big) +o(1).
\end{split}
\end{equation}
as $t=t_m \downarrow 0$, where for the last relation we used Lemma \ref{LemmaJacobian} with 
\[f(x) =  \left( \frac 1 J(\dot \eta^0\circ \pi_1)^2 \partial_N\, \big(J \Delta h^0\, \phi   \big) \right) (x) ,\]  noting also   that $\partial_s = \partial_N$ and $(\dot \eta^0\circ \pi_1)^2(z,\eta^0(z))  =   (\dot \eta^0)^2 (z)$ .
On the other hand, 
\begin{equation}\label{DD}
\begin{split}
I_2 &=  \frac{-1}{t^2}   \int_{\mathcal Z}dz    \int_{\eta^0+\dot\eta^0 t}^{\eta^0+\dot\eta^0 t +\frac 1 2 \ddot\eta^0t^2} d\s( J \Delta h^0\, \phi)(z,\s) +o(1)
\\&=
 -\frac 1 2 \int_{\mathcal Z}dz  \,\ddot\eta^0 \,( J \Delta h^0\, \phi)(z,\eta^0)  +o(1)
 \\
 &=
 -\frac 1 2 \int_{\Gamma^0} d\mathcal{H}^{n-1}  (N\cdot \nu^0)\,(\ddot\eta^0\circ\pi_1) \, \Delta h^0 \phi\, +o(1)
\end{split}
\end{equation}
as $t=t_m \downarrow 0$. 
Therefore, $\mathcal D^{t_m}_2 \rightarrow \mathcal D_2$ where 
\begin{equation}\label{D2}
\int \phi \mathcal D_2 =
\frac 1 2 \int_{\Gamma^0} d\mathcal{H}^{n-1}  (\dot \eta^0\circ \pi_1)^2 \, \frac{N\cdot\nu^0}{J} \partial_N\, \big(J \Delta h^0\, \phi   \big) 
-\frac 1 2 \int_{\Gamma^0} d\mathcal{H}^{n-1}  (N\cdot \nu^0)\,(\ddot\eta^0\circ\pi_1) \, \Delta h^0\phi.
\end{equation}

In dimension $n\ge 3$ we have
\[
w^{t_m}(\infty)= \frac 1 2\lim_{x\to \infty} \delta^2_{t_m} \tilde u ^0(\infty)    = \frac 1 2 \delta^2_{t_m} c ^0 \rightarrow \frac1 2 \ddot c^0 
\] 
and thus 
\[
w(\infty) = \frac 1 2 \ddot c^0 = \mbox{constant}.
\]
In dimension $n=2$ we have  instead
\[
\lim_{x\to \infty} \frac{w(x)}{- \log|x|}  = \frac1 2 \ddot c^0 
\]
and this implies that $2\pi \frac1 2 \ddot c^0 =  \int_{\R^2} \Delta w$ and that $w$ can be obtained (up to an additive constant) by convoling the Newtonian potential $P$ with $\Delta w$.

Therefore, combining \eqref{AA} and \eqref{D2},  we obtain that \eqref{decompw}--\eqref{implicit} hold.
\end{proof}

We may now state the  final result of this section.
\begin{proposition}\label{propapriori2}
Let $k\ge 2$.  Assume that for some $t_m\downarrow 0$ there exist $\dot\eta, \ddot\eta\in C^{0}(\mathcal Z)$ such that  
\[ \delta^2_{t_m} \eta^0 = 2\frac{\eta^{t_m} -\eta^0- \dot \eta^0 t_m}{t_m^2} \rightarrow \ddot \eta^0 \quad\mbox{in }C^{0}(\mathcal Z)\] 
as $t_m\downarrow 0$. 
Assume that $w\in C^1(\overline{\Omega^0})$ and
\begin{equation}\label{convergencewtk}
\lim \nabla w^{t_m} (x_m) \rightarrow \nabla w(x) \quad \mbox{as }m\to \infty \quad \mbox{for all } \ x_m\to x\ \mbox{ such that  }x_m\in \overline{\Omega^{t_m}}.
\end{equation}
Then,  $\Theta : \Gamma^0 \rightarrow \R$ defined by \eqref{defTheta} satisfies
\begin{equation}\label{second2}
\Theta -\frac 1 2 \partial_{\s\s\s} u^0 (\dot \eta^0 \circ\pi_1)^2  = \partial_{ss}v\, \dot \eta^0 + \partial_\s w \quad\mbox{on }\Gamma^0_{\rm out}.
\end{equation}
Moreover,  $\ddot \eta^0$ does not depend on $(t_m)$  and
% $\ddot\eta^0\in C^{k-2,\alpha}(\mathcal Z)$ with the estimate
\begin{equation}\label{estddoteta0}
 \|\ddot \eta^0\|_{C^{k-2,\alpha}(\mathcal Z)} \le C(\boldsymbol{\mathcal C}^0) \boldsymbol {\mathcal Q}
 \end{equation}
 where 
\[\boldsymbol {\mathcal Q}:=  \|\ddot h^0\|_{C^{k-1,\alpha}(\R^n)} + |\ddot c^0| +(\|\dot h^0\|_{C^{k,\alpha}(\R^n)}+|\dot c^0|)(  \|\dot h^0\|_{L^\infty(\R^n)}+|\dot c^0|).\]

\end{proposition}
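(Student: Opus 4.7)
The plan is to first derive the a priori relation \eqref{second2} by a second-order Taylor expansion of the identity $\partial_\s \tilde u^t = 0$ on $\Gamma^t$, and then to combine \eqref{second2} with the layer-potential representation of $w$ from Lemma~\ref{distlimit} to obtain a Fredholm boundary integral equation for $\Theta$ on $\Gamma^0$. From the definitions $\tilde u^t = \tilde u^0 + t v + t^2 w^t$ and $\eta^t = \eta^0 + t\dot\eta^0 + \tfrac{t^2}{2}\delta_t^2\eta^0$, plugging these into $\partial_\s \tilde u^{t_m}(z,\eta^{t_m}(z)) = 0$, expanding $\partial_\s \tilde u^0$ to second order and $\partial_\s v$ to first order around $\s = \eta^0(z)$, and using \eqref{convergencewtk} to replace $\partial_\s w^{t_m}(z,\eta^{t_m})$ by $\partial_\s w(z,\eta^0)$ from the $\Omega^0$ side, one obtains an expansion whose $O(t_m)$ coefficients cancel thanks to \eqref{eqndoteta0} in Proposition~\ref{apriori1}. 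Matching the $O(t_m^2)$ coefficients and using $\partial_{\s\s}\tilde u^0 = -(N\cdot\nu^0)^2\Delta h^0$ on $\Gamma^0_{\rm out}$ yields exactly \eqref{second2} with $\Theta$ as in \eqref{defTheta}. The Taylor expansion is justified by $\tilde u^0\in C^{3,\alpha}(\overline{\Omega^0})$ (Lemma~\ref{lemhigherregtildeu}) and $v\in C^{2,\alpha}(\overline{\Omega^0})$ from \eqref{estv}, both applicable since $k\ge 2$.

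Next, I substitute the decomposition $w = w_{\rm sol}+w_{\rm sin}+w_{\rm dou}+w_{\rm imp}+\text{const}$ from Lemma~\ref{distlimit} into \eqref{second2} and evaluate $\partial_\s w|_{\Gamma^0_{\rm out}}$ layer by layer. By the potential-theoretic estimates on $C^{k,\alpha}$ boundaries collected in the appendix, the normal traces of the three ``explicit'' pieces on $\Gamma^0_{\rm out}$ lie in $C^{k-2,\alpha}(\Gamma^0)$ with norms controlled by $\boldsymbol{\mathcal Q}$: $w_{\rm sol}$ is a volume Newtonian potential of a $C^{k-1,\alpha}$ density; $w_{\rm sin}$ is a single layer with density in $C^{k-2,\alpha}$ (using $\dot\eta^0\in C^{k-1,\alpha}$ from Proposition~\ref{apriori1}); and $w_{\rm dou}$ is a double layer with density in $C^{k-1,\alpha}$. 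The remaining piece $w_{\rm imp}$ is a single layer with density $\Theta/(N\cdot\nu^0)$, and the classical jump relation gives $\partial_N w_{\rm imp}|_{\Gamma^0_{\rm out}} = -\tfrac 1 2 \Theta + \mathcal T\Theta$, where $\mathcal T$ is a bounded operator on $C^{k-2,\alpha}(\Gamma^0)$ built from the principal-value and tangential parts of the single-layer gradient. Inserting this into \eqref{second2} reduces the problem to the Fredholm integral equation
\[
\tfrac 3 2 \Theta - \mathcal{T}\Theta = F \quad \text{on } \Gamma^0, \qquad \|F\|_{C^{k-2,\alpha}(\Gamma^0)}\le C(\boldsymbol{\mathcal C^0})\,\boldsymbol{\mathcal Q}.
\]

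Finally, since $\mathcal T$ has the structure of a double-layer adjoint on a $C^{k,\alpha}$ surface, $\tfrac 3 2 I - \mathcal T$ is Fredholm of index zero on $C^{k-2,\alpha}(\Gamma^0)$. Injectivity is verified by observing that any $\Theta^\star$ in its kernel generates a single-layer potential that is harmonic in $\R^n\setminus\Gamma^0$ with the prescribed behavior at infinity and whose jump in normal derivative reproduces $\Theta^\star$; a standard Liouville/uniqueness argument then forces $\Theta^\star\equiv 0$. Thus $\tfrac 3 2 I - \mathcal T$ is boundedly invertible, giving $\|\Theta\|_{C^{k-2,\alpha}(\Gamma^0)}\le C(\boldsymbol{\mathcal C^0})\,\boldsymbol{\mathcal Q}$. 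Dividing by $(N\cdot\nu^0)^2\Delta h^0$, which is bounded away from zero by $-\Delta h^0 \ge \rrho$ on $\overline{\UU}$ and $N\cdot\nu^0\ge 1-\varepsilon_o$, then yields the estimate \eqref{estddoteta0} for $\ddot\eta^0$, and the unique solvability of the Fredholm equation ensures $\ddot\eta^0$ is independent of the subsequence $(t_m)$. The main obstacle is the rigorous analysis of the singular integral operator $\mathcal T$ at $C^{k-2,\alpha}$ regularity together with the Fredholm/injectivity step, which crucially relies on the single- and double-layer estimates collected in the appendix.
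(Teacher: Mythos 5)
Your derivation of \eqref{second2} is the paper's argument in only slightly different clothing: the paper expands the identity $\partial_\s v^t=-\tfrac1t\partial_\s\tilde u^0$ on $\Gamma^t$ (equation \eqref{crucialrelation41}), subtracts \eqref{dsv}, and uses \eqref{convergencewtk} exactly as you do; and the reduction of \eqref{second2} to a boundary integral equation for $\Theta$ by splitting $\partial_\s w$ into the three ``explicit'' layers plus the implicit single layer, and extracting the $\tfrac12\Theta$ jump term, is also the paper's Step~2. Where you genuinely diverge is the last step. The paper does \emph{not} invoke the Fredholm alternative: it writes $\partial_N w_{\rm imp.}=(N\cdot\nu)\partial_\nu w_{\rm imp.}+\omega\cdot\nabla w_{\rm imp.}$ with $\omega=N-(N\cdot\nu)\nu$, observes that $\|\omega\|_{C^{k-2,\alpha}(\Gamma^0)}$ can be made arbitrarily small by the choice of $\varepsilon_o$ in \eqref{defdelta0}, bounds the principal-value part $\tilde\Theta$ by $\|\Theta\|_{C^{k-3,\alpha}}$ via Theorem \ref{singlelayerthm}(iii), and closes the estimate by absorption and interpolation. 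This yields \eqref{estddoteta0} with an explicit constant and with no spectral input.

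The weak point of your route is precisely the step you defer to a ``standard Liouville/uniqueness argument.'' First, your $\mathcal T$ is not a pure adjoint double layer: it contains the tangential term $\omega\cdot\nabla(\text{single layer})$, which is bounded on $C^{k-2,\alpha}(\Gamma^0)$ but \emph{not} compact (it is an order-zero singular integral), so Fredholmness of $\tfrac32 I-\mathcal T$ already requires invoking the smallness of $\omega$ --- something you never do. Second, the kernel of $\tfrac32 I-\mathcal T$ is characterized by the condition $\partial_N W|_{\Gamma^0_{\rm out}}=\Theta^\star$ for the single layer $W$ with density $\Theta^\star/(N\cdot\nu)$; this is not one of the classical interior/exterior Neumann eigenvalue problems (the derivative is along $N$, not $\nu$, and the trace is prescribed to equal a multiple of the jump rather than zero), so classical uniqueness does not apply off the shelf. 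Worse, in $n=2$ a single layer whose density has nonzero total integral satisfies $|\nabla W|\sim c/|x|$ at infinity, so $\int_{\Omega^0}|\nabla W|^2=\infty$ and the energy/Liouville identity you would need in the exterior domain diverges; this case must be handled separately (e.g., by first showing the total charge vanishes). Finally, even granting injectivity, the Fredholm alternative gives $\|(\tfrac32 I-\mathcal T)^{-1}\|\le C$ only abstractly, and one must still argue that $C$ depends only on $\boldsymbol{\mathcal C^0}$. None of these obstructions is fatal, but each requires an argument you have not supplied, whereas the paper's absorption--interpolation scheme bypasses all of them. I would either carry out the energy argument carefully (including $n=2$ and the $\omega$-perturbation) or switch to the paper's quantitative absorption argument for Step~3.
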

As for $\dot\eta$, the independence of $t_m$ and regularity of $\ddot\eta$ will be consequences of the fact that $\Theta$ solves the equation \eqref{second2}, for which regularity estimates and uniqueness hold. However, note that \eqref{second2} is an implicit equation for $\Theta$ since $w_{\rm imp.}$ depends on $\Theta$, which makes the analysis more involved.

To prove Proposition \ref{propapriori2}, we will need two auxiliary lemmas with standard proofs.
\begin{lemma}
\label{lemhigherregtildeu}
We have 
\[
\|\tilde u^0\|_{C^{k+1,\alpha}\left(\overline{B_{\RR}\cap\Omega^t}\right)}\le C(\boldsymbol{\mathcal C}^0 )
\] 
More generally, for $t\in[0,t_\circ)$ , where $t_\circ=t_\circ(\boldsymbol{\mathcal C})$ we have $\tilde u^t \in C^{k+1,\alpha}\left(\overline{\Omega^t}\right)$ with 
\[
\|\tilde u^t\|_{C^{k+1,\alpha}\left(\overline{B_{\RR}\cap\Omega^t}\right)}\le C(\boldsymbol{\mathcal C} )
\] 
\end{lemma}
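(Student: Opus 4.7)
The plan is to bootstrap the regularity of $\tilde u^t$ up to the free boundary $\Gamma^t$ by differentiating the equation once and exploiting that both $\tilde u^t$ and $\nabla \tilde u^t$ vanish on $\Gamma^t$. This extra information lets one apply standard boundary Schauder estimates to $\partial_e \tilde u^t$ rather than to $\tilde u^t$ itself, gaining one derivative over what Proposition~\ref{blank} would yield directly.

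First I would recall that classical obstacle-problem theory in $\R^n$ provides $\tilde u^t \in C^{1,1}(\R^n)$ with a norm depending only on $\|h^t\|_{C^{1,1}(\R^n)}$, and that interior Schauder estimates applied to $\Delta \tilde u^t = -\Delta h^t$ in $\Omega^t$ give $\tilde u^t \in C^{k+1,\alpha}_{\mathrm{loc}}(\Omega^t)$ with interior norms controlled by $\boldsymbol{\mathcal C}$. Consequently the content of the lemma is the uniform estimate up to $\Gamma^t$.

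For the main step, assuming $k \ge 2$, I would fix an arbitrary direction $e \in \S^{n-1}$ and set $v := \partial_e \tilde u^t$. Then $v$ is Lipschitz on $\R^n$, vanishes identically in $\R^n \setminus \Omega^t$, and in particular $v \equiv 0$ on $\Gamma^t$ since $\nabla \tilde u^t = 0$ there. Inside $\Omega^t$ it satisfies
\[
\Delta v = -\partial_e \Delta h^t,
\]
with right-hand side in $C^{k-2,\alpha}$ by the hypothesis $h \in C^{k+1,\alpha}$. By Proposition~\ref{blank}(i) we have the uniform bound $\|\Gamma^t\|_{C^{k,\alpha}_{\rrho/4}} \le C_o$, so the standard boundary Schauder estimates for the Dirichlet problem on $C^{k,\alpha}$ domains with zero boundary data give
\[
\|v\|_{C^{k,\alpha}(\overline{B_{\RR} \cap \Omega^t})} \le C(\boldsymbol{\mathcal C}).
\]
Since this holds for every $e \in \S^{n-1}$, combining with the interior bound yields $\|\tilde u^t\|_{C^{k+1,\alpha}(\overline{B_{\RR} \cap \Omega^t})} \le C(\boldsymbol{\mathcal C})$, and the analogous $t = 0$ statement with constant $C(\boldsymbol{\mathcal C}^0)$ is identical.

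The borderline case $k = 1$ requires a separate remark, since $\partial_e \Delta h^t$ need not even be continuous; here I would invoke the classical $C^{2,\alpha}$ regularity of the solution to the obstacle problem up to the regular part of the free boundary, which follows from Caffarelli's work together with the non-degeneracy $\Delta h^0 \le -\rrho$ in $\overline{\UU}$ from \eqref{rho1} and the uniform $C^{1,\alpha}$ control of $\Gamma^t$ in Proposition~\ref{blank}. The only real point to check -- rather than a substantial obstacle -- is the uniformity in $t$ of the Schauder constants, which is automatic from Proposition~\ref{blank} and the fact that $\boldsymbol{\mathcal C}$ bounds $\|h\|_{C^{k+1,\alpha}([-1,1]\times\overline{U})}$.
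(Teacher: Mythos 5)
Your proposal is correct and follows essentially the same route as the paper: differentiate the equation, use that $\partial_e \tilde u^t$ vanishes on $\Gamma^t$ because $\nabla\tilde u^t=0$ there, and apply boundary Schauder estimates on the uniformly $C^{k,\alpha}$ domain $\Omega^t$ furnished by Proposition~\ref{blank}. Your extra remarks on the borderline case $k=1$ and on the uniformity of the Schauder constants are sensible refinements of the same argument, not a different approach.
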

\begin{proof}
Note that $\partial_i \tilde u^t$ solves 
 \[ \Delta (\partial_i \tilde u^t) = -\Delta (\partial_i  h^t)\quad \mbox{in }\Omega^t\quad \mbox{ with } \quad \partial_i \tilde u^t= 0 \quad \mbox{on }\Gamma^t= \partial \Omega^t .\]
Since $-\Delta (\partial_i  h^t) \in  C^{k-2,\alpha}(\R^n)$ and  $\Gamma^t$ belongs to $C^{k,\alpha}_\rrho$, using standard Schauder estimates up to the boundary we obtain \[ \partial_i \tilde u^t \in C^{k,\alpha}\left(\overline{B_{\RR}\cap\Omega^t}\right) \]
 and hence 
\[
\tilde u^t \in C^{k+1,\alpha}\left(\overline{B_{\RR}\cap\Omega^t}\right).
\] 
\end{proof}

\begin{lemma}\label{potofchar}
Let $U\subset \overline{B_{R}}\subset\R^n$ be  bounded with $\partial  U$ belonging to  $C^{m+2,\alpha}_{r}$ for some $r>0$ and $f\in C^{m,\alpha}_c(B_{2R})$, where $m\ge 0$.
Let $W$  the solution of 
\[
\begin{cases}
\Delta W =f\,\chi_{\R^n\setminus U} \quad&\mbox{in } \R^n
\\
W(\infty) = 0 & \quad \text{resp. } \lim_{x\to \infty} \frac{W(x)}{-\log |x|} = 2\pi \int_{\R^2} f\,\chi_{\R^n\setminus U} \  .
\end{cases}
\]
which is given in dimension 2 by convolution with the logarithmic Newtonian potential.

Then, 
\[
\|W\|_{C^{m+2,\alpha}(\overline{B_{2R}} \setminus U)}\ +\|W\|_{C^{m+2,\alpha}(\overline{U})}\le C  \|f\|_{C^{m,\alpha}(\overline{B_{2R}})}
\]
where $C = C(n, m, \alpha,R,  r, \|\partial U\|_{C^{m+2,\alpha}_{r}})$.
\end{lemma}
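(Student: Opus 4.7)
The strategy is a representation formula: I would use Green's second identity to rewrite $W$ as a globally smooth bulk potential plus boundary single- and double-layer potentials on $\partial U$, and then read off the one-sided $C^{m+2,\alpha}$ regularity from the classical layer potential estimates collected in the appendix.

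First I would introduce the auxiliary Newtonian potential $F$ of the compactly supported density $f$ itself: since $f\in C^{m,\alpha}_c(B_{2R})$, the classical Schauder estimates for volume Newtonian potentials give $F\in C^{m+2,\alpha}(B_{3R})$ with
\[
\|F\|_{C^{m+2,\alpha}(B_{3R})}\le C\|f\|_{C^{m,\alpha}(\overline{B_{2R}})}.
\]
In particular, using $\partial U\in C^{m+2,\alpha}_r$, the traces satisfy $F|_{\partial U}\in C^{m+2,\alpha}(\partial U)$ and $\partial_\nu F|_{\partial U}\in C^{m+1,\alpha}(\partial U)$ with the corresponding norms controlled by $\|f\|_{C^{m,\alpha}(\overline{B_{2R}})}$.

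Next I would apply Green's second identity on $\R^n\setminus\overline U$ to the pair $(P(x-\cdot), F)$, using $\Delta F=f$ and $-\Delta_y P(x-y)=\delta_{y=x}$. For $n\ge 3$ the boundary contributions at infinity vanish by the usual $|y|^{2-n}$, $|y|^{1-n}$ decay of $P, F$ and their gradients; for $n=2$ the two leading logarithmic contributions in $P(x-y)\partial_\nu F(y)$ and $F(y)\partial_{\nu_y}P(x-y)$ on $\partial B_R$ cancel pointwise (both are proportional to $\log R/R$ times the total mass of $f$) and the remainder decays fast enough to vanish as $R\to\infty$. After accounting for the Dirac contribution when $x\in\R^n\setminus U$, this yields (up to signs and an additive constant in the $n=2$ case) a representation
\[
W(x)\;=\;F(x)\,\chi_{\R^n\setminus U}(x)\;+\;D\bigl[F|_{\partial U}\bigr](x)\;-\;S\bigl[\partial_\nu F|_{\partial U}\bigr](x),
\]
where $S[\phi](x):=\int_{\partial U}P(x-y)\phi(y)\,dS(y)$ and $D[\phi](x):=-\int_{\partial U}\phi(y)\,\partial_{\nu_y}P(x-y)\,dS(y)$ are the standard single- and double-layer potentials on $\partial U$.

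Finally I would invoke the one-sided $C^{m+2,\alpha}$ layer potential estimates (to be collected in the appendix): for a $C^{m+2,\alpha}$ surface, the single-layer of a $C^{m+1,\alpha}$ density and the double-layer of a $C^{m+2,\alpha}$ density each belong to $C^{m+2,\alpha}$ separately on $\overline U$ and on $\overline{\R^n\setminus U}$, with norms controlled by those of the density and by $\|\partial U\|_{C^{m+2,\alpha}_r}$. Applying these to $\partial_\nu F$ and $F|_{\partial U}$ from the first step, and combining with the global $C^{m+2,\alpha}$ bound on $F$ (so that $F\chi_{\R^n\setminus U}$ also inherits the one-sided $C^{m+2,\alpha}$ bound on each side of $\partial U$), yields the claimed estimate. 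The main obstacle is really just bookkeeping: both the Green identity computation (particularly the cancellation at infinity in dimension $2$) and the one-sided boundary layer estimates must be stated with constants that depend quantitatively on $\|\partial U\|_{C^{m+2,\alpha}_r}$ rather than merely qualitatively on $\partial U$; this is precisely the form in which the appendix provides them.
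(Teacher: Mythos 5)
Your argument is essentially correct but follows a genuinely different route from the paper's. You represent $W$ via Green's identity as a bulk Newtonian potential of $f$ plus a single layer with density $\partial_\nu F|_{\partial U}$ \emph{and} a double layer with density $F|_{\partial U}$, and then you invoke one-sided $C^{m+2,\alpha}$ estimates for both kinds of layer potentials. The paper instead avoids double layers entirely: it introduces $\tilde W$ solving the exterior Dirichlet problem $\Delta\tilde W=f$ in $\R^n\setminus U$ with $\tilde W=0$ on $\partial U$ (controlled by Schauder estimates up to the boundary), extends it by zero inside $U$, and observes that $\tilde W-W$ is then a \emph{pure single layer} whose density is the normal derivative jump $\partial_{\nu,\rm out}\tilde W\in C^{m+1,\alpha}(\partial U)$; Theorem~\ref{singlelayerthm} finishes the proof. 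The zero Dirichlet data is precisely what kills the double-layer term in the jump representation.

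The one caveat in your proposal is that you attribute the double-layer estimate to the appendix, but the appendix (Theorem~\ref{singlelayerthm}) only treats single layer potentials. The one-sided $C^{m+2,\alpha}$ regularity of the double layer of a $C^{m+2,\alpha}$ density on a $C^{m+2,\alpha}$ surface is classical but more delicate than the single-layer statement (the sharp smoothness bookkeeping between surface and density is exactly the sort of thing that is hard to find cleanly referenced, as the paper itself remarks even for single layers), so if you follow your route you must either prove that estimate or replace the double layer by a tangential-derivative-of-single-layer reduction. Aside from this, your Green's identity computation, including the cancellation of the logarithmic terms at infinity in dimension $2$, is sound, and the quantitative dependence of the constants on $\|\partial U\|_{C^{m+2,\alpha}_r}$ comes out the same way in both arguments.
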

%\cm{what is $r$ here?}
\begin{proof}
 Let $\tilde W$ be the solution of 
\[
\begin{cases}
\Delta \tilde W =f \quad&\mbox{in } \R^n\setminus U
\\
\tilde W = 0 &\mbox{on }\partial U
\\
\tilde W(\infty) = 0, &\quad \mbox{resp. } \lim_{x\to \infty} \frac{\tilde W(x)}{-\log |x|} = 0 \ .
\end{cases}
\]
We consider $\tilde W$ defined in all of $\R^n$ by extending it by $0$ in $U$.

Note that by standard Schauder estimates up to the boundary we have
\begin{equation}\label{star2}
\| \tilde W \|_{C^{m+2,\alpha}(\overline{B_{2R}}\setminus U)} \le C \|f\|_{C^{m,\alpha}(\overline{B_{2R}})}.
\end{equation}

On the other hand the difference $(\tilde W-W)$ solves, in all of $\R^n$
\[
\begin{cases}
\Delta (\tilde W-W) =  \partial_{\nu,\,{\rm out}}\tilde W \,H^{n-1}\restriction_{\partial U}   \quad\mbox{in } \R^n
\\
(\tilde W-W)(\infty)= 0, \\ \quad \mbox{resp. } \lim_{x\to \infty} \frac{(\tilde W-W)(x)}{-\log |x|} =  -2\pi \int_{\R^2} f \,\chi_{\R^n\setminus U}  = 2\pi \int_{\partial U}\partial_{\nu,\,{\rm out}}\tilde W \mbox{ at }\infty.
\end{cases}
\]
Therefore, $\tilde W-W$ is a single layer potential and using Theorem \ref{singlelayerthm} we obtain 
\[
\begin{split}
\| (\tilde W-W) \|_{C^{m+2,\alpha}(\overline{B_{2R}}\setminus U)} + \| (\tilde W-W) \|_{C^{m+2,\alpha}(\overline  U)}  &\le C \| \partial_{\nu,\,{\rm out}}\tilde W\|_{C^{m+1,\alpha}(\partial U)} \\ &\le C \| \tilde W \|_{C^{m+2,\alpha}(\overline{B_{2R}}\setminus U)} \le C \|f\|_{C^{m,\alpha}(\overline{B_4})}.
\end{split}
\]
Using \eqref{star2} and recalling that by definition $\tilde W\equiv 0$ in $U$ we obtain 
\[
\|W\|_{C^{m+2,\alpha}(\overline{B_{2R}} \setminus U)}\ +\|W\|_{C^{m+2,\alpha}(\overline{U})}\le C  \|f\|_{C^{m,\alpha}(\overline{B_{2R}})}.
\] 
\end{proof}

\begin{proof}[Proof of Proposition \ref{propapriori2}]
{\em Step 1.} We first prove \eqref{second2}.

Expanding \eqref{crucialrelation1} like in \eqref{crucialrelation2} but up to the next order, we find
 \begin{equation}\label{crucialrelation41}
 \partial_{\s} v^t(z,\eta^t)  = -  \partial_{\s\s} \tilde u^0(z,\eta^0) \left(\dot \eta^0  +\frac 1 2 \ddot \eta^0 t +o(t)\right)  -\frac 1 2 \partial_{\s\s\s} \tilde u^0( z,\eta^0) \left(\dot \eta^0 \right)^2 t + o(t).
 \end{equation}
 as $t= t_m \downarrow 0$.
 
Here $\eta$, $\dot \eta$ and $\ddot \eta$ are evaluated at $z$ (although we omit this in the notation) and $\partial_{\s\s} \tilde u^0(z,\eta^0)$ and $\partial_{\s\s\s} \tilde u^0( z,\eta^0)$ mean the limits from $\Omega^0$. 
To obtain the Taylor expansion up to third order of $\tilde u^0$ we are using that, by Lemma \ref{lemhigherregtildeu},  $u^0 \in C^{k+1,\alpha}\left(\overline{B_{\RR}\cap\Omega^0}\right)$ where $k\ge 2$.
Recall here that $\{u^0=0\} = \R^n\setminus \Omega^0 \subset \UU \subset B_{\RR}$.
 
Subtracting to both sides of \eqref{crucialrelation41} the quantity
\begin{equation}\label{dsv}
\partial_{\s} v(z,\eta^0) =  -  \partial_{\s\s} \tilde u^0(z,\eta^0) \dot \eta^0  \end{equation}
and dividing by $t$ we obtain 
\begin{equation}\label{second}
\frac{\partial_{\s} v^t(z,\eta^t) - \partial_{\s} v(z,\eta^0)}{t} = -\frac 1 2 \partial_{\s\s} \tilde u^0(z,\eta^0)  \ddot \eta^0-\frac 1 2 \partial_{\s\s\s} u^0( z,\eta^0) \left(\dot \eta^0 \right)^2 +o(1).
\end{equation}

Recall that by Lemma \ref{distlimit} we have $w^t\rightarrow w$ in the sense of distributions with $w$ given by \eqref{decompw}--\eqref{implicit}. 
Then, the assumption \eqref{convergencewtk} allows us to compute  the limit of the left-hand side in \eqref{second}, namely,  
\begin{equation}\label{second1}
\begin{split}
\lim_{t=t_m \downarrow 0} \frac{\partial_s v^t(z,\eta^t) -\partial_s v(z,\eta^0)}{t} 
&= \lim_{t=t_m \downarrow 0} \frac{\partial_s v(z,\eta^t) -\partial_s v (z,\eta^0)}{t} +  \frac{\partial_s v^t(z,\eta^t) -\partial_s v (z,\eta^t)}{t} 
\\
&= \partial_{ss} v(z,\eta^0)\, \dot \eta  +  \lim_{t=t_m \downarrow 0} (N(z,\eta^t) \cdot \nabla w^t (z,\eta^t) 
\\
&= \partial_{ss} v(z,\eta^0)\, \dot \eta  +  \partial_s w^t(z,\eta^0) 
\end{split}
\end{equation}
where we have used the assumption \eqref{convergencewtk}.

Taking $t= t_m \downarrow 0$ in  \eqref{second} and using \eqref{second1} we obtain
\[
 -\frac 1 2 \partial_{\s\s} \tilde u^0(z,\eta^0)  \ddot \eta^0-\frac 1 2 \partial_{\s\s\s} u^0( z,\eta^0) \left(\dot \eta^0 \right)^2  = \partial_{ss} v(z,\eta^0)\, \dot \eta  +  \partial_s w^t(z,\eta^0)  .
\]
Recalling the definition of $\Theta$  in \eqref{defTheta}  and the fact that $\partial_{\s\s} \tilde u^0 = -\Delta h^0$ on $\Gamma^0$ --- and in particular at $(z,\eta^0)$ --- we obtain  \eqref{second2}.

{\em Step 2.} We use \eqref{second2} to prove uniqueness and regularity of $\ddot\eta$.
Recall that 
\[\partial_\s w = \partial_N w = \partial_N w_{\rm sol.} +\partial_N w_{\rm sin.}+\partial_N w_{\rm dou.}+\partial_N w_{\rm imp.}\]
and while $\partial_N w_{\rm sol.}$,  $\partial_N w_{\rm sin.}$, $\partial_N w_{\rm dou.}$ depend only on ``known'' functions --- see \eqref{solid}, \eqref{single}, \eqref{double} ---   the term $\partial_N w_{\rm imp.}$ introduces a ``implicit'' dependence on $\Theta$ --- see \eqref{implicit}. We therefore need to ``solve for $\Theta$'' in \eqref{second2} in order to prove the uniqueness and regularity of its solutions $\Theta$.

For this, we write 
\[
\partial_N w_{\rm imp.} = (N\cdot \nu)\partial_\nu w_{\rm imp.}  + \big(N-(N\cdot \nu)\nu\big)\cdot \nabla w_{\rm imp.}\quad \mbox{on }\Gamma^0_{\rm out}
\]
where $\nu =\nu^0$. Recall that by a standard result on single layer potentials --- see Theorem \ref{singlelayerthm} --- we have
\begin{equation}\label{normdersingle}
(N\cdot \nu) \partial_\nu w_{\rm imp.}(x) =  \frac{\Theta(x) }{2}  + \tilde\Theta(x)\quad \mbox{on }\Gamma^0_{\rm out}.
\end{equation}
where
\begin{equation}\label{tildeTheta}
\tilde \Theta(x) :=   \int_{\Gamma_0} d\mathcal{H}^{n-1}(y) \left( -\frac{\Theta}{(N\cdot \nu) } \right) (y)\  \nu(x)\cdot \nabla P(x-y).
\end{equation} 
Note that the first term in the right-hand side of \eqref{normdersingle} is exactly the half of the first (and main) term in the left-hand side of \eqref{second2}.  Using this and denoting 
\[
\omega(x) := \big(N-(N\cdot \nu)\nu\big)(x) \quad \mbox{for $x$ on $\Gamma^0$}
\]
we obtain 
\begin{equation}\label{equationddot}
\begin{split}
\frac 12 \Theta \ =\   \frac 1 2 \partial_{\s\s\s} u^0 (\dot \eta^0 \circ\pi_1)^2  &+ \dot \eta^0\, \partial_\s N\cdot \nabla v  + \partial_\s (w_{\rm sol.}+w_{\rm sin.}+  w_{\rm dou.}) \ +
\\
 &\hspace{30mm}+ \omega\cdot \nabla w_{\rm imp.}  
+ \tilde \Theta\quad \mbox{on }\Gamma^0_{\rm out}.
\end{split}
\end{equation}

{\it Step 3.}   From \eqref{equationddot},  we  may deduce optimal regularity estimates for $\Theta$, and hence for $\ddot\eta ^0$. To do so we will bound  each of the five terms in the right-hand side of \eqref{equationddot} separately.
 
From here on, the constant   $C$ means $C=C\big(n, k,\alpha, \rrho, \|h^0\|_{C^{k+1,\alpha}(\R^n)} \big)$.
 
For the first term, we use that $h^0\in C^{k+1,\alpha}$, we obtain  that $\Gamma^0 \in C^{k, \alpha}_{\rrho/4}$ , that  $\nu^0 \in C^{k-1, \alpha}(\Gamma^0)$, and that $\eta^0\in C^{k,\alpha}(\mathcal Z)$ with estimates ---here we are using the regularity estimates on $\Gamma^0$  from Propostition \ref{blank}.
In particular, 
  \begin{equation}\label{projandnorm}
 \|\pi_1\|_{C^{k, \alpha}(\Gamma^0)} + \|\nu^0\|_{C^{k-1, \alpha}(\Gamma^0)}\le C.
 \end{equation}
Observe also that the vector field $N$ is smooth and hence $\partial_{\s\s\s}u^0$ ---the third derivative  of $u^0$ along an integral curve of $N$--- as regular as $D^3u^0$.

Therefore, 
\begin{equation}\label{1stterm} 
\begin{split}
\left\| \frac 1 2 \partial_{\s\s\s} u^0 (\dot \eta^0 \circ\pi_1)^2 \right\|_{C^{k-2,\alpha}(\Gamma^0)}
&\le 
C \big( \left\| u^0  \right\|_{C^{k+1,\alpha}(\overline{B_{\RR}\cap\Omega^0})}\left\| (\dot \eta^0\circ \pi_1)^2  \right\|_{L^\infty(\Gamma^0) }\  + 
\\
& \hspace{13mm}+\left\| u^0  \right\|_{L^\infty(B_{\RR}\cap\Omega^0)} \left\| (\dot \eta^0\circ\pi_1)^2  \right\|_{C^{k-2,\alpha}(\Gamma^0) }\big)
\\
&\le C \left\| (\dot \eta^0)^2  \right\|_{C^{k-2,\alpha}(\mathcal Z)}
\\& \le C  \boldsymbol {\mathcal Q}.
\end{split}
\end{equation}

For the second term, we use again that $N$ is smooth and recalling  the estimate  \eqref{estv} for  $v$  and the estimate $\dot\eta$ in \eqref{estdoteta0},  we obtain
\begin{equation}\label{2ndterm}
\begin{split}
\left\|  (\dot \eta^0\circ\pi_1) \,\partial_{ss}  v   \right\|_{C^{k-2,\alpha}(\Omega^0)}
&\le 
C \left( \left\|  \dot \eta^0 \right\|_{C^{k-2,\alpha}(\mathcal Z)}\left\| v  \right\|_{L^\infty (B_{\RR}\cap\Omega^0)} + \left\| \dot \eta^0   \right\|_{L^\infty(\mathcal Z)} \left\| v  \right\|_{C^{k,\alpha}(\overline{B_{\RR}\cap\Omega^0)} }\right)
\\
&\le  C \boldsymbol {\mathcal Q}.
\end{split}
\end{equation}
where we used \eqref{estdoteta0} and \eqref{estv}.

For the third term, we proceed as follows.
From Lemma \ref{potofchar} we obtain that 
\[
 \left\| \nabla w_{\rm sol}  \right\|_{C^{k-2,\alpha}(B_{\RR}\cap\Omega^0)} \le C \left\|  \Delta \ddot h^0 \right\|_{C^{k-2,\alpha}}
 \le C\boldsymbol {\mathcal Q}.
\]
Next,  since $N$ and $J$ are smooth, $\Delta h^0\in C^{k-1,\alpha}$, $\Gamma^0\in C^{k,\alpha}$, and $\nu^0\in C^{k-1,\alpha}$ we obtain by Theorem \ref{singlelayerthm} (i) that 
\[
 \left\| w_{\rm sin.}  \right\|_{C^{k-1,\alpha}(\Omega^0)} \le C\left(  \left\| (\dot \eta^0 \circ\pi_1)\Delta\dot h^0 \right\|_{C^{k-2,\alpha}(\Gamma_0)} +  \left\| (\dot \eta^0\circ \pi_1)^2  \right\|_{C^{k-2,\alpha}(\Gamma_0)} \right) 
 \le C \boldsymbol {\mathcal Q}
\]
and by Theorem \ref{singlelayerthm} (iii)
\[ \left\| w_{\rm dou.}  \right\|_{C^{k-1,\alpha}(\Omega^0)} \le C \left\| (\dot \eta^0\circ\pi_1)^2  \right\|_{C^{k-1,\alpha}(\Gamma_0)}\le C  \boldsymbol {\mathcal Q}.\]
Hence, 
\begin{equation}\label{3rdterm}
\left\| \partial_\s (w_{\rm sol.} +w_{\rm sin.}+  w_{\rm dou.})  \right\|_{C^{k-2,\alpha}(\Gamma^0)} \le C  \boldsymbol {\mathcal Q}.
\end{equation}

For the term $\omega \cdot \nabla w_{\rm imp.}$ we use that Theorem \ref{singlelayerthm} (i) yields \[ \left\| w_{\rm imp.}  \right\|_{C^{k-1,\alpha}(B_{\RR}\cap\Omega^0)} \le C \left\| \Theta \right\|_{C^{k-2,\alpha}(\Gamma_0)} \]
and thus
\begin{equation}\label{4thterm}
\left\| \omega \cdot \nabla w_{\rm imp.}  \right\|_{C^{k-2,\alpha}(\Gamma^0)} \le C\left\| \omega \right\|_{C^{k-2,\alpha}(\Gamma^0)}  \left\| \Theta  \right\|_{C^{k-2,\alpha}(\Gamma^0)} .
\end{equation}

Also,  recalling the definition of $\tilde \Theta$ in \eqref{tildeTheta} and using Theorem \ref{singlelayerthm} (iii)  we obtain %
\begin{equation}\label{5thterm}
\left\| \tilde \Theta \right\|_{C^{k-2,\alpha}(\Gamma^0)} \le C\left\| \Theta  \right\|_{C^{k-3,\alpha}(\Gamma^0)} .
\end{equation}

Inserting \eqref{1stterm}--\eqref{5thterm} into \eqref{equationddot}, we obtain 
\[
\left\| \Theta \right\|_{C^{k-2,\alpha}(\Gamma^0)} \le C\left(   \boldsymbol {\mathcal Q} + \left\| \omega \right\|_{C^{k-2,\alpha}(\Gamma^0)}  \left\| \Theta  \right\|_{C^{k-2,\alpha}(\Gamma^0)}  +\left\| \Theta  \right\|_{C^{k-3,\alpha}(\Gamma^0)}\right) .
\]

Note that we may take $ \left\| \omega  \right\|_{C^{k-2,\alpha}(\Gamma^0)}$  arbitrarily small by taking $\varepsilon_o$ in \eqref{defdelta0} small enough. Then, by a standard interpolation argument we obtain 
\begin{equation}\label{aprioritheta}
\left\|  \Theta \right\|_{C^{k-2,\alpha}(\Gamma^0)} \le C \boldsymbol {\mathcal Q} .  
\end{equation}

Finally we recall the definition of $\Theta$ in $\eqref{defTheta}$, use that $\nu^0\in C^{k-1,\alpha}$, $-\Delta h^0 \ge \rrho$, $\Delta h^0 \in C^{k-1,\alpha}$, and observe that
$\pi_0^{-1}: \mathcal Z\rightarrow \Gamma^0$ satisfies $\|\pi_0^{-1}\|_{C^{k,\alpha}(\mathcal Z)}\le C$ with $C$ universal, 
to  obtain
\begin{equation}\label{apriori2}
\left\|  \ddot\eta^0 \right\|_{C^{k-2,\alpha}(\mathcal Z)} \le C \boldsymbol {\mathcal Q}.  
\end{equation}

\end{proof}

\section{Removing the a priori assumptions}\label{sec4}
In Section \ref{sec:apriori} we assumed the existence of the limits
\begin{equation}\label{assumpC0}
 \frac{\eta^{t_m}-\eta^0}{{t_m}} \rightarrow \dot \eta^0 
\qquad\mbox{and}\qquad
2\frac{\eta^{t_m} -\eta^0- \dot \eta^0 {t_m}}{t_m^2} \rightarrow  \ddot \eta^0  \qquad\mbox{in }C^{0}(\mathcal Z)
\end{equation}
and we have shown that  $\dot \eta^0$ and $\ddot\eta^0$ must  then satisfy certain equations for which  uniqueness  and regularity estimates were proven. 

The purpose of the next section is to prove that  under our assumptions,   \eqref{assumpC0}  indeed holds for every sequence $t_m\downarrow 0$. 

\subsection{The setup}
We start by introducing a new system of coordinates in  $U_\circ\cap \overline {\Omega^0}$ that are adapted to $u^0$.

Let us define 
\begin{equation}
\sigma = \sigma(x) :=  \partial_N \tilde u_0(x) .
\end{equation}
Note that   $\sigma$ is defined in  $U_\circ\cap \overline {\Omega^0}$ and takes positive values in that neighborhood of $\Gamma^0$ if $U_\circ $ is chosen small enough. An application of the implicit function theorem gives that $(z,\sigma)$ are $C^{k,\alpha}$ coordinates in  $U_\circ\cap \overline {\Omega^0}$ (up to taking a smaller neighborhood $U_\circ$).
Indeed,  for $\nu = \nu^0$
\begin{equation}\label{changeofcords}
\frac{\partial \sigma}{\partial \s} =  \partial_{ss} \tilde u^0  = (N\cdot \nu)^2 \partial_{\nu\nu} \tilde u^0 = (N\cdot \nu)^2 \Delta \tilde u^0 = -  (N\cdot \nu)^2 \Delta h^0
\end{equation}
on $\Gamma^0_{\rm out}$ and where by assumption $-\Delta h^0\ge \rrho>0$ in a neighborhood of $\Gamma^0$.
Note in addition that the new coordinates $(z,\sigma)$ are indeed $C^{k,\alpha}$  since $\tilde u^0\in  C^{k+1,\alpha}(\overline{\Omega^0})$.
 
Let us also introduce
\[ \bar\pi_1 : U_\circ\cap\Omega^0 \rightarrow \mathcal Z\]
to be the projection defined in the coordinates $(z,\sigma)$ by 
\[ (z,\sigma)\mapsto (z,0).\]

These coordinates are clearly related to the hodograph transform of the obstacle problem introduced by Kinderlehrer and Nirenberg in \cite{KN}. Note also that  for the case of the model solution to the obstacle problem $\frac 1 2 (x_n)_+^2$ and with $N= \boldsymbol e_n$ the coordinate $\sigma$ would simply be $x_n$.

In view of Proposition \ref{blank} there exist $\lambda^t\in C^{k,\alpha}(\mathcal Z)$ such that
\begin{equation}\label{deflambda}
 \Gamma^t \ = \ \{\sigma = \lambda^t(z)\}\qquad  \mbox{ for } t\in(0,t_\circ).
 \end{equation}

In the coordinates $(z,\sigma)$ we have
\begin{equation}
\lambda^0 \equiv  0 
\end{equation}
since $\sigma = \partial_N u^0 \equiv 0$ on $\Gamma^0$.
In addition, from \eqref{crucialrelation1} and  the definition of the coordinate $\sigma$ we have 
\[
\partial_N v^t   =  -\frac{\partial_N \tilde u^0}{t} =  -\frac{\sigma}{t} = -\frac{\lambda^t}{t} \circ \bar \pi_1 \quad \mbox{on }\Gamma^t
\]
hence
\begin{equation}\label{crucialrelation}
\frac{\lambda^t}{t}(z) = -\partial_N v^t(z,\lambda^t(z)) .
\end{equation}%\cm{sign?}
Indeed to prove \eqref{crucialrelation} we use \eqref{crucialrelation1} and  the definition of the coordinate $\sigma$ to obtain
\[
\partial_N v^t   =  -\frac{\partial_N \tilde u^0}{t} =  -\frac{\sigma}{t} = -\ltt\circ \bar \pi_1 \quad \mbox{on }\Gamma^t.
\]
The relation \eqref{crucialrelation}  will allow to  prove uniform $C^{k-1,\alpha}$ estimates for  
$ \frac{\lambda^t}{t}$, then leading to the existence of the limit as $t\downarrow 0$ of $\ltt$, which will be denoted $\dot \lambda^0$. Later  on, we  will prove uniform $C^{k-2,\alpha}$ estimates for 
\[\frac 1 2 \, \frac{\lambda^t -\dot \lambda^0t }{t^2} = \frac{1}{2}\, \frac{\ltt - \dot\lambda^0}{t}\]
which  will lead to the existence of its limit as $t\to 0$, denoted $\ddot \lambda^0$.
These estimates will be deduced from the equation
\begin{equation}\label{crucialrelationbis}
\frac{\ltt - \dot\lambda^0}{t}  =  - \frac{\partial_N v(z,\lambda^t(z))-\partial_N v(z,0) }{t} - \partial_N w^t(z,\lambda^t(z)),
\end{equation}
obtained from \eqref{crucialrelation} by subtracting $\dot\lambda^0(z)= -\partial_N v(z,0)$ to both sides, dividing by $t$ on both sides, and recalling that by definition $w^t= (v^t-v)/t$.

\subsection{Estimate on $\ltt$}

The goal of this subsection is to prove a  regularity result (without a priori assumptions) on $\ltt$. We state it next.
\begin{proposition}\label{lambdat/t}
For $t\in (0,t_\circ)$ we have 
\[
\left\| \ltt \right\|_{C^{k-1,\alpha}(\mathcal Z)} \le C(\boldsymbol{\mathcal C}).
\]
\end{proposition}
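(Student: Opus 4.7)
The starting point is the key relation \eqref{crucialrelation}, $\ltt(z)=-\partial_N v^t(z,\lambda^t(z))$, which reduces the proposition to a uniform $C^{k-1,\alpha}$ bound on $\partial_N v^t|_{\Gamma^t}$ (once this is known, the conclusion follows since $\lambda^t\in C^{k,\alpha}(\mathcal Z)$ uniformly by Proposition \ref{blank}). The plan is to realise $v^t$ as a global Newtonian potential and then to turn \eqref{crucialrelation} into an integral equation for $\mu^t:=\ltt$ that one can close by a Fredholm-type argument combined with a bootstrap on regularity.

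First, the natural extension of $v^t$ by $-\tilde u^0/t$ in $\Omega^0\setminus\Omega^t$ and by $0$ outside $\Omega^0$ is globally $C^1$ on $\R^n$ (the Dirichlet condition in \eqref{vt} together with \eqref{crucialrelation} exactly matches both $v^t$ and $\nabla v^t$ from the two sides of $\Gamma^t$). This extension satisfies, in the distributional sense,
\[
\Delta v^t = -\Delta\delta_t h^0\,\chi_{\Omega^t} + \tfrac{1}{t}\Delta h^0\,\chi_{\Omega^0\setminus\Omega^t}\quad\text{in }\R^n,
\]
together with the prescribed behaviour at infinity. Hence $v^t = I_1^t + I_2^t + \mathrm{const}$ (with the appropriate $\log$-correction in $n=2$), where $I_1^t$ is the Newton potential of $\Delta\delta_t h^0\,\chi_{\Omega^t}$ and $I_2^t$ that of $-\tfrac{1}{t}\Delta h^0\,\chi_{\Omega^0\setminus\Omega^t}$. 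Lemma \ref{potofchar} together with the uniform $C^{k,\alpha}_{\rrho/4}$ regularity of $\Gamma^t$ from Proposition \ref{blank} gives $\|\partial_N I_1^t|_{\Gamma^t}\|_{C^{k-1,\alpha}(\mathcal Z)} \le C(\boldsymbol{\mathcal C})$.

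The heart of the argument is the treatment of $I_2^t$. Changing variables to $(z,\sigma)$ and rescaling $\sigma=t\tau$, the $O(t)$-width strip $\Omega^0\setminus\Omega^t=\{0<\sigma<\lambda^t(z)\}$ becomes the fixed-geometry region $\{0<\tau<\mu^t(z)\}$, and Taylor-expanding the kernel $P(x-y(z,t\tau))$ together with $\Delta h^0(z,t\tau)$ and the Jacobian $J(z,t\tau)$ in the small parameter $t\tau$, combined with Lemma \ref{LemmaJacobian}, gives
\[
I_2^t \;=\; \mathcal S\bigl[-(N\!\cdot\!\nu^0)\,\Delta h^0\cdot \mu^t\!\circ\!\pi_1\bigr] + R^t,
\]
where $\mathcal S$ is the single-layer potential on $\Gamma^0$ and $R^t$ collects remainders whose normal derivatives on $\Gamma^t$ satisfy $\|\partial_N R^t|_{\Gamma^t}\|_{C^{k-1,\alpha}(\mathcal Z)}\le C\,t\,(1+\|\mu^t\|_{C^{k-1,\alpha}})$ by the appendix's single/double-layer estimates. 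Applying the classical jump formula of Theorem \ref{singlelayerthm} from the $\Omega^0$-side and substituting into \eqref{crucialrelation} produces an integral equation
\[
\bigl(1-\tfrac12(N\!\cdot\!\nu^0)^2\Delta h^0\bigr)\,\mu^t \;=\; F^t + K[\mu^t] + E^t[\mu^t]\quad\text{on }\mathcal Z,
\]
with $F^t\in C^{k-1,\alpha}(\mathcal Z)$ uniformly bounded, $K$ a principal-value (single-layer) operator that gains one derivative, $K:C^{j,\alpha}\to C^{j+1,\alpha}$ for $0\le j\le k-2$, and $\|E^t[\mu^t]\|_{C^{k-1,\alpha}}\le Ct\,\|\mu^t\|_{C^{k-1,\alpha}}$. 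Crucially, the multiplier on the left is bounded below by $1$ uniformly since $-\Delta h^0\ge\rrho>0$ and $(N\!\cdot\!\nu^0)^2\in(0,1]$.

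To finish, note $\|\mu^t\|_{L^\infty(\mathcal Z)}\le C(\boldsymbol{\mathcal C})$ is immediate from Proposition \ref{blank}(ii). Because $K$ gains a derivative and $E^t$ is small in norm for $t_\circ$ small, a bootstrap on regularity raises the estimate step by step: if $\mu^t\in C^{j,\alpha}$ uniformly, then $K[\mu^t]\in C^{j+1,\alpha}$ uniformly, and dividing by the uniformly positive multiplier yields $\mu^t\in C^{\min(j+1,k-1),\alpha}$ uniformly; after $k-1$ iterations one reaches $\|\mu^t\|_{C^{k-1,\alpha}(\mathcal Z)}\le C(\boldsymbol{\mathcal C})$, as claimed. \textbf{The main obstacle} is the quantitative expansion of $I_2^t$ as a single-layer potential on $\Gamma^0$ with a remainder $E^t$ controlled in the full $C^{k-1,\alpha}$ topology by $O(t)\,\|\mu^t\|_{C^{k-1,\alpha}}$: only with this quantitative statement can the derivative-gaining property of $K$ be exploited uniformly in $t$, closing the bootstrap, and this is exactly where the single- and double-layer estimates of Appendix \ref{app} enter in an essential way.
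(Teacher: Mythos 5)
Your overall strategy is the paper's: start from \eqref{crucialrelation}, split $v^t$ into the potential of $\Delta\delta_t h^0\chi_{\Omega^t}$ (handled by Lemma \ref{potofchar}) and the potential of the thin strip $\Omega^0\setminus\Omega^t$, view the latter as an approximate single layer, apply the jump formula to extract a multiple of $\ltt$, and absorb. However, there are two genuine problems. First, your leading-order density is wrong. You parametrize the strip as $\{0<\sigma<\lambda^t(z)\}$ in the hodograph coordinate $\sigma=\partial_N\tilde u^0$ but then use the Jacobian $J$ of the $(z,s)$ coordinates; the correct volume element is $\bar J\,dz\,d\sigma$, and the conversion of $\frac1t\int_0^{\lambda^t}(\cdot)\,d\sigma$ into a surface integral carries the factor $\bigl(\tfrac{\partial\sigma}{\partial s}\bigr)^{-1}=(\partial_{ss}\tilde u^0)^{-1}\approx\bigl(-(N\cdot\nu^0)^2\Delta h^0\bigr)^{-1}$ (this is Lemma \ref{LemmaJacobian2}). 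With it, the density is $\approx \ltt\circ\bar\pi_1/(N\cdot\nu^0)$, not $-(N\cdot\nu^0)\Delta h^0\,\ltt\circ\pi_1$, and the jump formula then yields the universal coefficient $\tfrac12\ltt$ — whence the robust absorbed inequality $\tfrac12\|\ltt\|\le C+\tfrac1{100}\|\ltt\|$ of Lemma \ref{controldifmain}. Your multiplier $1-\tfrac12(N\cdot\nu^0)^2\Delta h^0$ is an artifact of the missing Jacobian; with the signs tracked correctly the analogous (still wrong) multiplier would be $1+\tfrac12(N\cdot\nu^0)^2\Delta h^0$, which can vanish when $-\Delta h^0$ is of order $2$, so the claimed lower bound by $1$ is not something one can rely on. The fact that the Jacobian of the hodograph coordinate exactly cancels $\Delta h^0$ is not cosmetic: it is what makes the equation solvable uniformly in the data.

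Second, your key quantitative step — that $I_2^t$ equals a single layer on the \emph{fixed} surface $\Gamma^0$ plus a remainder $R^t$ with $\|\partial_N R^t|_{\Gamma^t}\|_{C^{k-1,\alpha}}\le Ct(1+\|\ltt\|_{C^{k-1,\alpha}})$ — is asserted, not proved, and it is precisely the hard point: the error of smearing a layer over an $O(t)$-wide strip involves one extra derivative of the kernel, and controlling it in $C^{k-1,\alpha}$ up to $\Gamma^t$ (which lies inside the strip) does not follow from Theorem \ref{singlelayerthm} alone. The paper avoids this expansion-plus-remainder entirely via Lemma \ref{lemapproxsingle}: by Fubini the strip potential is \emph{exactly} an average $\int_0^1 V^\theta\,d\theta$ of honest single layer potentials on the intermediate surfaces $\Gamma^t_\theta=\{\sigma=\theta\lambda^t\}$, each of which is uniformly $C^{k,\alpha}$; the jump formula is applied on each $\Gamma^t_\theta$, and the mismatch between evaluating $\partial_N V^\theta$ on $\Gamma^t$ versus on $\Gamma^t_\theta$ is controlled in \eqref{I1zzz} by writing it as $t\int_\theta^1\partial_\sigma\partial_N V^{\bar\theta}\,\ltt\,d\bar\theta$ and using $t\|\ltt\|_{C^{k,\alpha}}=\|\lambda^t\|_{C^{k,\alpha}}\le C$. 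You would need either to reproduce this averaging device or to supply a full proof of your remainder bound; as written the argument does not close.
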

Before proving Proposition \ref{lambdat/t}, let us state its main corollary
\begin{corollary}\label{corvelocityexists}
There exist $\dot \eta^0$ and $\dot \lambda^0$ such that
\[
\frac{\eta^t-\eta^0}{t} \rightarrow \dot \eta^0 \qquad \mbox{and} \qquad \frac{\lambda^t}{t} \rightarrow \dot \lambda^0 \qquad\mbox{in }C^0(\mathcal Z) 
\]
as $t\downarrow 0$.
\end{corollary}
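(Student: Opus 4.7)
The plan is to combine the unconditional $C^{k-1,\alpha}$ bound in Proposition~\ref{lambdat/t} with compactness and the uniqueness statement in Proposition~\ref{apriori1}. Since $k\ge 1$, Proposition~\ref{lambdat/t} gives that the family $\{\ltt\}_{t\in(0,t_\circ)}$ is uniformly bounded in $C^{0,\alpha}(\mathcal Z)$. As the embedding $C^{0,\alpha}(\mathcal Z)\hookrightarrow C^0(\mathcal Z)$ is compact, Arzel\`a--Ascoli yields, for every sequence $t_m\downarrow 0$, a subsequence (not relabeled) along which $\lambda^{t_m}/t_m$ converges to some $\dot\lambda^0\in C^{k-1,\alpha}(\mathcal Z)$ in $C^0(\mathcal Z)$.

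Next, I would transfer this to $C^0$-convergence of $\delta_{t}\eta^0=(\eta^{t}-\eta^0)/t$ via the coordinate change. By definition, a point $(z,\eta^t(z))\in \Gamma^t$ has $\sigma$-coordinate $\lambda^t(z)$, i.e.\ $\lambda^t(z)=\sigma\bigl(z,\eta^t(z)\bigr)$. Using $\sigma(z,\eta^0(z))=0$ together with
\[
\frac{\partial\sigma}{\partial s}(z,\eta^0)\;=\;-\bigl((N\cdot\nu^0)^2\Delta h^0\bigr)(z,\eta^0)\;=:\;-\kappa(z),
\]
from \eqref{changeofcords}, together with the Lipschitz bound $\|\eta^t-\eta^0\|_{L^\infty(\mathcal Z)}\le C\,t$ from Proposition~\ref{blank} and the fact that $\tilde u^0\in C^{k+1,\alpha}(\overline{\Omega^0\cap B_{\RR}})$ (Lemma~\ref{lemhigherregtildeu}), a Taylor expansion at $\s=\eta^0(z)$ produces
\[
\frac{\lambda^t(z)}{t}\;=\;-\kappa(z)\,\frac{\eta^t(z)-\eta^0(z)}{t}\;+\;o(1)\qquad\text{in } L^\infty(\mathcal Z)\text{ as }t\downarrow 0.
\]
Since $\kappa\ge (1-\varepsilon_o)^2\rrho>0$ uniformly on $\mathcal Z$, multiplying by $-1/\kappa$ shows that convergence of $\ltt$ in $C^0(\mathcal Z)$ along a subsequence is equivalent to convergence of $\delta_t\eta^0$ in $C^0(\mathcal Z)$ along the same subsequence, and the limits are related by $\dot\eta^0=-\dot\lambda^0/\kappa$.

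Finally, I would upgrade subsequential convergence to convergence of the full family by invoking the uniqueness built into Proposition~\ref{apriori1}: any $C^0$ subsequential limit $\dot\eta^0$ of $\delta_{t_m}\eta^0$ must be given by the explicit formula \eqref{eqndoteta0}, so the limit is the same along every subsequence. Hence $\delta_t\eta^0\to\dot\eta^0$ in $C^0(\mathcal Z)$ as $t\downarrow 0$, and the corresponding $\dot\lambda^0=-\kappa\,\dot\eta^0$ is the full $C^0$ limit of $\ltt$. The main obstacle is essentially bookkeeping: making sure the Taylor remainder is genuinely $o(1)$ uniformly in $z$ (which follows from the a priori bound $|\eta^t-\eta^0|\le Ct$ together with $\sigma\in C^{k,\alpha}$ near $\Gamma^0$); everything else is compactness plus the a priori identification of the limit already proved in Section~\ref{sec:apriori}.
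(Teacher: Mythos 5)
Your proposal is correct and follows essentially the same route as the paper: the uniform $C^{k-1,\alpha}$ bound from Proposition~\ref{lambdat/t}, Arzel\`a--Ascoli for subsequential $C^0$ limits, transfer between the $(z,\s)$ and $(z,\sigma)$ coordinates, and the uniqueness of the limit supplied by Proposition~\ref{apriori1} (you merely make the coordinate change more explicit via the Taylor expansion of $\sigma$). The only slip is a harmless sign convention: with your definition $\kappa=(N\cdot\nu^0)^2\Delta h^0$ one has $\kappa\le-(1-\varepsilon_o)^2\rrho<0$ rather than $\kappa>0$, but since only the lower bound on $|\kappa|$ is used to divide, the argument is unaffected.
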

\begin{proof}
Let $t_p\downarrow 0$. 
Note that both coordinate systems $(z,s)$ and $(z,\sigma)$ are $C^{k,\alpha}$. 
Hence, the estimate $\left\| \ltt \right\|_{C^{k-1,\alpha}(\mathcal Z)} \le C$ implies 
$\left\| \frac{\eta^t-\eta^0}{t} \right\|_{C^{k-1,\alpha}(\mathcal Z)} \le C$ and by  Arzel\`a-Ascoli there is a subsequence $t_{m}$ such that 
\[ 
\frac{\eta^{t_{m}}-\eta^0}{t_{m}} \rightarrow \ell_1 \qquad \mbox{and} \qquad \frac{\lambda^{t_{m}}}{t_{m}} \rightarrow  \ell_2 \qquad\mbox{in }C^0(\mathcal Z) 
\] 
for certain limit functions $\ell_1$ and $\ell_2$ in $C^{k-1,\alpha}(\mathcal Z)$.
Applying Proposition \ref{apriori1}, we must  have  $\ell_1 =\dot \eta^0$, the function given by \eqref{eqndoteta0}. Then, either using the change of variables between $s$ and $\sigma$ or passing to the limit in \eqref{crucialrelation} we obtain 
\[
\ell_2(z) = \dot\lambda^0(z) := \partial_N v(z, \sigma=0)
\]

Therefore, we have proven that each sequence has a subsequence converging to a limit that is independent of the sequence. In other words the limits as $t\downarrow 0$ exist and are given by $\dot \eta^0$ and $\dot\lambda^0$.
\end{proof}

In view of \eqref{crucialrelation}, Proposition \ref{lambdat/t} will follow immediately  from the following
\begin{lemma}\label{controldifmain}
For $t\in (0,t_\circ)$  we have 
\[
\left\|\partial_{N} v^t (\cdot , \lambda^t(\cdot)) - \frac{1}{2} \frac{\lambda^t}{t} \right\|_{C^{k-1,\alpha}(\mathcal Z)} \le C(\boldsymbol{\mathcal C})  +  \frac{1}{100} \left\|\frac{\lambda^t}{t}\right\|_{C^{k-1,\alpha}(\mathcal Z)}.
\]
\end{lemma}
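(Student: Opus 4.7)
The plan is to represent $v^t$ as a Newton potential of its distributional Laplacian on $\R^n$, to rewrite the resulting integral over the thin strip $\Omega^0\setminus\Omega^t$ as a single layer on $\Gamma^0$ plus a higher-order remainder, and then to use the one-sided single-layer jump formula from the $\Omega^0$ side (Theorem \ref{singlelayerthm}, exactly as invoked in \eqref{normdersingle}) to extract the main $\tfrac12 \lambda^t/t$ contribution. All other terms will be bounded by $C+\tfrac{1}{100}\|\lambda^t/t\|_{C^{k-1,\alpha}}$ via Theorem \ref{singlelayerthm} and interpolation.

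Since $\Delta\tilde u^s = -\Delta h^s\chi_{\Omega^s}$ distributionally on $\R^n$, one computes $-\Delta v^t = \Delta(\delta_t h^0)\chi_{\Omega^t} - (\Delta h^0/t)\chi_{\Omega^0\setminus\Omega^t}$, and together with $v^t(\infty)=\delta_t c^0$ this writes $v^t = G^t - F^t + \delta_t c^0$, where $G^t$ is the Newton potential of the first summand (bounded in $C^{k+1,\alpha}$ by Lemma \ref{potofchar}) and $F^t := (1/t)\int P(x-y)\Delta h^0\chi_{\Omega^0\setminus\Omega^t}(y)\,dy$. Changing coordinates to $(z',\sigma')$ on the strip $\{0<\sigma'<\lambda^t(z')\}$, Taylor-expanding in $\sigma'$ at $\sigma'=0$, and using \eqref{changeofcords} together with Lemma \ref{LemmaJacobian}, the leading term of $F^t$ reduces to a single layer on $\Gamma^0$,
\[
F^t(x) = -\int_{\Gamma^0} P(x-y)\mu^t(y)\,d\mathcal{H}^{n-1}(y) + R^t(x), \qquad \mu^t(y):=\frac{\lambda^t(z(y))/t}{(N\cdot\nu^0)(y)},
\]
with the effective surface density of $R^t$ on $\Gamma^0$ being of size $(\lambda^t)^2/t = t(\lambda^t/t)^2$. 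The crucial algebraic identity $(N\cdot\nu^0)\mu^t = \lambda^t/t$ will then make the extraction of $\tfrac12\lambda^t/t$ from the jump formula exact.

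Substituting into $v^t = G^t + \int_{\Gamma^0} P\mu^t - R^t + \mathrm{const}$, using the one-sided single-layer trace identity $(N\cdot\nu^0)\partial_{\nu^0}(\int P\mu^t)|_{\Gamma^0_{\rm out}} = \tfrac12\lambda^t/t + \tilde T\mu^t$ from \eqref{normdersingle}--\eqref{tildeTheta}, the decomposition $N = (N\cdot\nu^0)\nu^0 + \omega$ with $\omega:=N-(N\cdot\nu^0)\nu^0$ tangential to $\Gamma^0$, and the Hausdorff estimate $d_{\rm Hausdorff}(\Gamma^t,\Gamma^0)\le Ct$ from Proposition \ref{blank} to transfer the boundary trace from $\Gamma^0$ to $\Gamma^t$ up to an $O(t^\alpha)$ Hölder correction, one obtains
\[
\partial_N v^t\bigl(z,\lambda^t(z)\bigr) - \tfrac12\tfrac{\lambda^t(z)}{t} = \partial_N G^t + \tilde T\mu^t + \omega\cdot\nabla_\tau u - \partial_N R^t + O(t^\alpha),
\]
where $u(x) := \int_{\Gamma^0} P(x-y)\mu^t(y)\,d\mathcal{H}^{n-1}(y)$. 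Each of the four remainder terms will be bounded in $C^{k-1,\alpha}(\mathcal Z)$ by $C+\tfrac{1}{400}\|\lambda^t/t\|_{C^{k-1,\alpha}}$. For $\partial_N R^t$, the extra $t$-factor from the higher-order Taylor term combined with $\|\lambda^t/t\|_{L^\infty}\le C$ (Proposition \ref{blank}) and Theorem \ref{singlelayerthm}(i) yields $\|\partial_N R^t\|_{C^{k-1,\alpha}}\le Ct\|\lambda^t/t\|_{C^{k-1,\alpha}}$, sufficiently small for $t_\circ$ small. For $\tilde T\mu^t$, Theorem \ref{singlelayerthm}(iii) provides the derivative-gaining estimate $\|\tilde T\mu^t\|_{C^{k-1,\alpha}}\le C\|\mu^t\|_{C^{k-2,\alpha}}$; standard interpolation with $\|\mu^t\|_{L^\infty}\le C$ then produces the bound $\epsilon\|\lambda^t/t\|_{C^{k-1,\alpha}}+C_\epsilon$, with $\epsilon$ chosen small. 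For $\omega\cdot\nabla_\tau u$, the smallness $\|\omega\|_{L^\infty}\le\varepsilon_o$ from \eqref{defdelta0} absorbs the dominant contribution, and the product-rule cross term $\|\omega\|_{C^{k-1,\alpha}}\|\nabla u\|_{L^\infty}\le C$ uses Theorem \ref{singlelayerthm}(i) together with $\|\mu^t\|_{L^\infty}\le C$. The $O(t^\alpha)$ transfer is controlled analogously.

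The main technical obstacle is that the reduced single-layer density $\mu^t=(\lambda^t/t)/(N\cdot\nu^0)$ literally contains the quantity $\lambda^t/t$ whose $C^{k-1,\alpha}$ norm we are trying to bound, so all naive estimates produce a constant times $\|\lambda^t/t\|_{C^{k-1,\alpha}}$ on the right-hand side. Closing the estimate with the small constant $\tfrac{1}{100}$ requires three independent smallness mechanisms operating in concert: the $t$-smallness from the higher-order strip-Taylor terms, the $\varepsilon_o$-smallness of $\omega$ arising from the freedom to choose $N$ arbitrarily close to $\nu^0$, and the derivative-gaining regularity of the principal-value operator provided by Theorem \ref{singlelayerthm}(iii), which reflects the weakly singular nature of the double-layer kernel of the Newton potential.
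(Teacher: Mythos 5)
Your overall architecture matches the paper's: split $v^t$ into the smooth volume potential of $\Delta\delta_t h^0\chi_{\Omega^t}$ (handled by Lemma \ref{potofchar}, as in Lemma \ref{controlvt2}) plus the potential of the thin strip $\frac1t\Delta h^0\chi_{\Omega^0\setminus\Omega^t}$; extract $\tfrac12\lambda^t/t$ from the single-layer jump formula using the algebraic cancellation $(N\cdot\nu^0)\cdot\frac{-\Delta h^0}{\partial_{ss}u^0}\cdot(N\cdot\nu^0)=1$ on $\Gamma^0$; and absorb the remaining terms via the three smallness mechanisms you correctly identify ($t$-smallness, $\varepsilon_o$-smallness of $\omega$, and interpolation against the derivative-gaining operator of Theorem \ref{singlelayerthm}(iii)). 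The identification of the effective density $\mu^t=(\lambda^t/t)/(N\cdot\nu^0)$ is also exactly right.

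The genuine gap is your treatment of the Taylor remainder $R^t$. You claim $\|\partial_N R^t\|_{C^{k-1,\alpha}}\le Ct\|\lambda^t/t\|_{C^{k-1,\alpha}}$ ``by Theorem \ref{singlelayerthm}(i) combined with the extra $t$-factor,'' but $R^t$ is not a layer potential: after one Taylor step in $\sigma'$ its kernel involves $\nabla P(x-y)$ integrated over the thin strip, whose boundary contains the very surface $\Gamma^t$ on which you then take $\partial_N$ and a $C^{k-1,\alpha}$ norm. Controlling $k$ derivatives plus a H\"older seminorm of such an ``approximate double layer'' up to its own boundary is precisely the delicate point of this whole section, and Theorem \ref{singlelayerthm} does not apply to it as stated; you have in effect assumed the conclusion of the hard step. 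The paper circumvents this by never Taylor-expanding the kernel: in Lemma \ref{lemapproxsingle} it writes the strip potential exactly as $\int_0^1 V^\theta\,d\theta$, where each $V^\theta$ is a genuine single layer on the intermediate surface $\Gamma^t_\theta=\{\sigma=\theta\lambda^t\}$, applies the jump formula on each leaf (Lemma \ref{sobolev}), and controls the mismatch between evaluating on $\Gamma^t$ versus on $\Gamma^t_\theta$ by the one-sided interior estimate $\|V^\theta\|_{C^{k+1,\alpha}(\overline{\Omega^t_\theta})}\le C\|\lambda^t/t\|_{C^{k,\alpha}}$ together with the crucial absorption $t\,\|\lambda^t/t\|_{C^{k,\alpha}}=\|\lambda^t\|_{C^{k,\alpha}}\le C$ from Proposition \ref{blank} (Step 1 of Lemma \ref{controldif}); note that this absorption uses the $C^{k,\alpha}$ norm of $\lambda^t$ itself, not merely an ``$O(t^\alpha)$ H\"older correction'' as you suggest. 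To make your route rigorous you would have to represent $R^t$ as an average over the same foliation (as the paper does for the genuinely second-order remainder in Lemma \ref{reg-approx-double-layer}), at which point you have reproduced the paper's argument; as written, the estimate of $\partial_N R^t$ is unjustified.
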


Next we state a sequence of lemmas aimed at  proving Lemma \ref{controldifmain}.
To study the regularity of $\partial_Nv^t$, let us write down (for the first time) the equation for $v^t = \frac 1 t(\tilde u^t-\tilde u^0)$ in all of $\R^n$.
We have 
\begin{equation}\label{vtall}
\begin{cases}
\Delta v^t =  -\frac{\Delta h^0}{t} \chi_{\Omega^0\setminus \Omega^t} + \Delta\delta_th^0 \chi_{\Omega^t} \quad \mbox{in }\R^n
\\
\hspace{3pt}
v^t(\infty) = \delta_t c^0, \quad \mbox{ resp. } \lim_{x\to \infty} \frac{v^t(x)}{-\log |x|} = \delta_tc^0.
\end{cases}
\end{equation}
Hense, we may decompose $v^t$ as
\[ v^t  =  v^t_{1}+ v^t_{2} + \mbox{constant},\] 
where
\begin{equation}\label{vt1}
 v^t_1(x)  := - \int_{\R^n} dy \left(\frac{\Delta h^0}{t} \chi_{\Omega^0\setminus \Omega^t} \right) (y) P(x-y)
\end{equation}
and
\begin{equation}\label{vt2}
v^t_2(x)  =   -\int_{\R^n} dy  \Delta \delta_th^0 \chi_{\Omega^t} (y) P (x-y) .
\end{equation}

To prove Lemma  \ref{controldifmain}  we will deal separately with the two contributions $\partial_Nv_1$ and $\partial_Nv_2$ to $\partial_Nv$.

Note that  $\partial_Nv_1$  is an ``approximate single layer potential''. To study its regularity we need the 
next lemma. Before giving its statement, we need to introduce some notation.

We denote
\[\bar J(z,\sigma) := | {\rm det}\,  D\, (z, \sigma)^{-1}|\] the Jacobian 
of the coordinates $(z,\sigma)$ defined by
\begin{equation}\label{jac2} \int_A f(x) \,dx = \int_{(z,\sigma)(A)} f(z,\sigma) \bar J(z,\sigma) \, dz\,d\sigma.\end{equation}
Also, for $\theta\in(0,1)$ we denote 
\[ 
\Omega^t_\theta :=Â \{ x\in U \cap \Omega^0 : \sigma(x)> \theta \lambda^t(z(x))  \}\cup (\Omega^0\setminus U),\]
\[
\Gamma^t_\theta :=\partial \Omega^t_\theta = \{\sigma = \theta\lambda^t(z)\}
\]
and $\nu^t_\theta$ the unit normal to $\Gamma^t_\theta$ towards $\Omega^t_\theta$.
Although the following lemma will be used in this subsection for $F\equiv -\Delta h^0$, we write it for general $F$ for later use.

\begin{lemma}\label{lemapproxsingle}
Let $V$ be the single layer potential
\begin{equation}\label{W1}
 V(x)  =  \int_{\R^n} dy \left(\frac{1}{t} F\,\chi_{\Omega^t\setminus \Omega^0} \right)(y) P(x-y).
\end{equation}
We may write
\begin{equation}\label{formulaV}
V=  \int_0^1 V^{\theta} \,d\theta 
\end{equation}
where 
\[
V^{\theta}  =  \int_{\Gamma^t_\theta}   \mathcal{H}^{n-1} (y)  \left( F\, \ltt \circ \bar\pi_1\, \frac{(N\cdot \nu^t_\theta)}{\partial_{ss} u^0}\right) (y) P(x-y)
\]
and  for all $\theta \in (0,1)$ we have
\begin{equation}\label{estV}
\|\nabla V^\theta\|_{C^{k-1,\alpha}(\overline{\Omega^t})} \le C(\boldsymbol{\mathcal C}) \left \|F\,   \ltt \circ \bar\pi_1\right\|_{C^{k-1,\alpha}(\Gamma^t_\theta)}.
\end{equation}
\end{lemma}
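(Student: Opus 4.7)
The plan is to foliate the thin region $\Omega^0\setminus\Omega^t = \{0<\sigma<\lambda^t(z)\}$ by the level surfaces $\Gamma^t_\theta = \{\sigma=\theta\lambda^t(z)\}$, $\theta\in(0,1)$, write $V$ as a superposition over $\theta$ of single layer potentials on these surfaces, and then apply Theorem \ref{singlelayerthm} uniformly in $\theta$. (I read $\chi_{\Omega^t\setminus\Omega^0}$ in \eqref{W1} as $\chi_{\Omega^0\setminus\Omega^t}$, consistently with \eqref{vt1}.)

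Using the Jacobian $\bar J$ from \eqref{jac2}, I rewrite
\[
V(x) = \int_{\mathcal Z}dz\int_0^{\lambda^t(z)}\frac{F(z,\sigma)}{t}\bar J(z,\sigma)\,P(x-(z,\sigma))\,d\sigma,
\]
and substitute $\sigma=\theta\lambda^t(z)$, $d\sigma = \lambda^t(z)\,d\theta$, interchanging the integration order by Fubini to get
\[
V(x) = \int_0^1 d\theta\int_{\mathcal Z}dz\,\bar J(z,\theta\lambda^t)\,\frac{F\lambda^t}{t}(z,\theta\lambda^t)\,P(x-(z,\theta\lambda^t)).
\]
To convert the inner $dz$-integral into a surface integral on $\Gamma^t_\theta$, a co-area-type computation modeled on the proof of Lemma \ref{LemmaJacobian} (slicing an Euclidean layer of thickness $\varepsilon$ in the $N$-direction above $\Gamma^t_\theta$ and using $\partial_s\sigma = \partial_{ss}\tilde u^0$) gives
\[
\int_{\Gamma^t_\theta} (N\cdot\nu^t_\theta)\,g\,d\mathcal H^{n-1} = \int_{\mathcal Z}g(z,\theta\lambda^t)\,(\partial_{ss}\tilde u^0)(z,\theta\lambda^t)\,\bar J(z,\theta\lambda^t)\,dz.
\]
Applied to $g = \bigl(F\cdot (\lambda^t/t)\circ\bar\pi_1\bigr)\,P(x-\cdot)/\partial_{ss}u^0$, the $\partial_{ss}\tilde u^0$ factor cancels and one recovers \eqref{formulaV} with $V^\theta$ exactly of the claimed form.

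For the estimate \eqref{estV}, I view $V^\theta$ as a single layer potential on $\Gamma^t_\theta$ with density
\[
\rho_\theta := \bigl(F\,(\lambda^t/t)\circ\bar\pi_1\bigr)\,\frac{N\cdot\nu^t_\theta}{\partial_{ss}u^0}\bigg|_{\Gamma^t_\theta}.
\]
Since $N$ is smooth, $\bar\pi_1\in C^{k,\alpha}$, and $\partial_{ss}\tilde u^0$ is $C^{k-1,\alpha}$ and bounded below by $\rrho/2$ near $\Gamma^0$ (by \eqref{rho1} and continuity, after shrinking $U_\circ$ and $t_\circ$ if needed), the factor $(N\cdot\nu^t_\theta)/\partial_{ss}u^0$ is $C^{k-1,\alpha}$ with universal bound, provided one has uniform (in $\theta$) $C^{k,\alpha}$ regularity of $\Gamma^t_\theta$. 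This last point follows from a uniform $C^{k,\alpha}(\mathcal Z)$ bound on $\lambda^t$, itself a consequence of Proposition \ref{blank} together with the $C^{k,\alpha}$ change of variables $(z,s)\leftrightarrow(z,\sigma)$. Theorem \ref{singlelayerthm}(i) then yields
\[
\|\nabla V^\theta\|_{C^{k-1,\alpha}(\overline{\Omega^t})} \le C\,\|\rho_\theta\|_{C^{k-1,\alpha}(\Gamma^t_\theta)} \le C\,\|F\,(\lambda^t/t)\circ\bar\pi_1\|_{C^{k-1,\alpha}(\Gamma^t_\theta)},
\]
where the one-sided estimate suffices since $\overline{\Omega^t}$ lies on the $\sigma>\theta\lambda^t$ side of $\Gamma^t_\theta$. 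The main point requiring care will be the bookkeeping in the surface-slicing identity --- getting the factor $\partial_{ss}\tilde u^0$ in the right place so that it cancels cleanly against $1/\partial_{ss}u^0$ in the claimed density --- together with the uniformity in $\theta\in(0,1)$ of the single-layer constant; beyond this, the argument is routine potential theory.
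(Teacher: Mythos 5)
Your proof is correct and matches the paper's approach: foliate $\Omega^0\setminus\Omega^t$ by the surfaces $\Gamma^t_\theta$, pass to surface integrals via the coordinate Jacobian identity (the paper's Lemma~\ref{LemmaJacobian2}), and apply Theorem~\ref{singlelayerthm}(i) uniformly in $\theta$. The only cosmetic difference is that you manipulate $V(x)$ directly while the paper identifies the distribution $\Delta V$ against a test function; you also correctly read the paper's $\chi_{\Omega^t\setminus\Omega^0}$ as $\chi_{\Omega^0\setminus\Omega^t}$ and its $\partial_{ss}u^0$ as $\partial_{ss}\tilde u^0=\partial\sigma/\partial s$, consistent with the rest of the paper.
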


 Before giving the proof of the previous lemma let us give the analogue to Lemma \ref{LemmaJacobian} in the present context.
\begin{lemma} \label{LemmaJacobian2}
Given $f:\Gamma^t_{\theta}\rightarrow \R$ continuous we have 
\[
\int_{\Gamma^t_{\theta}} \frac{(N\cdot \nu^t_{\theta})}{\frac{\partial\sigma}{\partial s}} (x) f(x) d\mathcal{H}^{n-1}(x)  = \int_{Z} f(z,\theta\lambda^t(z)) \bar J(z,\theta\lambda^t(z)) dz. 
\]
\end{lemma}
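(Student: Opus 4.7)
The strategy mirrors the proof of Lemma \ref{LemmaJacobian}: I will compute the volume integral over a thin slab adjacent to $\Gamma^t_\theta$ in two ways and equate the limits. The coordinates $(z,\sigma)$ are a $C^{k,\alpha}$ diffeomorphism from a neighborhood of $\Gamma^0$ in $U_\circ\cap\overline{\Omega^0}$ onto $\mathcal{Z}\times(-\sigma_\circ,\sigma_\circ)$, and by Proposition~\ref{blank} together with \eqref{deflambda}, $\Gamma^t_\theta$ lies in this neighborhood for $t\in(0,t_\circ)$ with $t_\circ$ small. After extending $f$ continuously to a neighborhood of $\Gamma^t_\theta$, I define
\[
A^\varepsilon := \{x\in U_\circ\cap \Omega^0 : \theta\lambda^t(z(x)) \le \sigma(x) \le \theta\lambda^t(z(x)) + \varepsilon\}.
\]

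First computation: applying the change of variables \eqref{jac2} and substituting $\sigma = \theta\lambda^t(z)+\bar\sigma$,
\[
\frac{1}{\varepsilon}\int_{A^\varepsilon} f\,dx = \int_{\mathcal Z}dz\,\frac{1}{\varepsilon}\int_0^\varepsilon (f\bar J)(z,\theta\lambda^t(z)+\bar\sigma)\,d\bar\sigma \longrightarrow \int_{\mathcal Z} f(z,\theta\lambda^t(z))\bar J(z,\theta\lambda^t(z))\,dz,
\]
which is the right-hand side of the claim.

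Second computation: I use the geometry of the slab. At a point $x\in\Gamma^t_\theta$, increasing $\sigma$ by $\varepsilon$ while keeping $z$ fixed corresponds to moving along the integral curve of $N=\partial_s$ by an $s$-increment of $\varepsilon/\frac{\partial\sigma}{\partial s}(x) + o(\varepsilon)$, uniformly in $x$. Since $N$ is smooth of unit length, $A^\varepsilon$ is thus a tubular slab whose $N$-thickness at $x\in\Gamma^t_\theta$ equals $\varepsilon/\frac{\partial\sigma}{\partial s}(x) + o(\varepsilon)$. The standard tube/coarea formula (using that the $N$-flow slab projects onto $\Gamma^t_\theta$ with local Jacobian $N\cdot\nu^t_\theta$, exactly as in the computation of Lemma~\ref{LemmaJacobian}) then gives
\[
\frac{1}{\varepsilon}\int_{A^\varepsilon} f\,dx \longrightarrow \int_{\Gamma^t_\theta} f(x)\,\frac{(N\cdot\nu^t_\theta)(x)}{\frac{\partial\sigma}{\partial s}(x)}\,d\mathcal{H}^{n-1}(x).
\]
Equating the two limits yields the identity. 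The only (mild) point requiring care is the uniformity of the $s$-versus-$\sigma$ Taylor expansion; this is immediate since the change of variables $(z,\sigma)\mapsto(z,s)$ is $C^{k,\alpha}$ and $\frac{\partial\sigma}{\partial s}$ is bounded below by a positive constant in the neighborhood, thanks to \eqref{changeofcords} together with the hypothesis $-\Delta h^0\ge \rrho$. There is no serious obstacle here: this lemma is purely a bookkeeping statement, and the difference from Lemma~\ref{LemmaJacobian} is exactly the extra factor $1/\frac{\partial\sigma}{\partial s}$ coming from the fact that $\sigma$, unlike $s$, is not arc-length along $N$.
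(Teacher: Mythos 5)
Your proposal is correct and follows essentially the same route as the paper: both compute $\tfrac{1}{\varepsilon}\int_{A^\varepsilon} f$ over the same thin slab in the $\sigma$-coordinate, once via the change of variables with Jacobian $\bar J$ and once as a tube integral over $\Gamma^t_\theta$ with the weight $(N\cdot\nu^t_\theta)/\tfrac{\partial\sigma}{\partial s}$. Your remark on the uniform $N$-thickness $\varepsilon/\tfrac{\partial\sigma}{\partial s}+o(\varepsilon)$ just makes explicit a step the paper leaves implicit.
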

\begin{proof}
Let us assume without loss of generality that $f$ is continuously extended in a neighborhood of $\Gamma^t_0$ contained in $U_\circ\cap \overline{\Omega^0}$.
Given $\varepsilon>0$ let $$A^\varepsilon: = \{x\in U_\circ\ :\  \theta \lambda^t(z(x)) \le \sigma(x) \le  \theta \lambda^t(z(x))+\varepsilon\}.$$
Recalling that $\frac{\partial \sigma}{\partial s} \partial_\sigma = N = \partial_s$  and that $|N| =1$,  we have
\[
\int_{\Gamma^0}  \frac{(N\cdot \nu^t_{\theta})}{\frac{\partial\sigma}{\partial s}} (x)  f(x) d\mathcal{H}^{n-1}(x)  =\lim_{\varepsilon \downarrow 0}  \frac 1\varepsilon  \int_{A^{\varepsilon}} f(x) d\mathcal{H}^n(x).
\]
On the other hand, for $$(z,\sigma)(A^\varepsilon) : = \{(z,\sigma)\in\mathcal Z\times (-\sigma_\circ,\sigma_\circ):\  \theta \lambda^t(z) \le \sigma \le \theta \lambda^t(z)+\varepsilon\}$$ we have, by definition of $\bar J$,
\[
\begin{split}
\frac 1\varepsilon \int_{A^{\varepsilon}} f(x) d\mathcal{H}^n(x)& = \frac 1\varepsilon \int_{(z,s)(A^\varepsilon)} f(z,\s) \bar J(z,\s) dz\,d\s 
\\
&=  \int_{\mathcal Z} dz\, \frac 1\varepsilon \int_{0}^{\varepsilon} d\bar s \,f(z,\theta \lambda^t(z)+\bar s) J\bar (z,\theta \lambda^t(z)+\bar s)
\\
&= \int_{\mathcal Z} f(z,\theta \lambda^t(z)) \bar J(z,\theta \lambda^t(z))\, dz + o(1)
\end{split}
\]
as $\varepsilon\downarrow 0$ and the lemma follows.
\end{proof}

\begin{proof}[Proof of Lemma \ref{lemapproxsingle}]
The key idea in the proof is to think of  an approximate  single layer potential as an average (or integral) of exact single layer potentials. 
More precisely, using \eqref{jac2} we may write
\[
\begin{split}
\int \phi\Delta V &:= \frac{1}{t} \int_{\mathcal Z} dz \int_0^{\lambda^t (z)} d\sigma  (F\phi \bar J)(z,\sigma)
\\
&=  \int_{\mathcal Z}dz \frac{1}{t}  \int_0^{1}d\theta \,\lambda^t(z) (F\phi \bar J)(z, \theta\lambda^t(z))
\\
&=   \int_0^{1} d\theta \int_{\{\sigma =\theta \lambda^t(z)\}}  \ltt (F\phi)(y)  \frac{(N\cdot \nu^t_\theta)(y)}{\frac{\partial \sigma}{\partial s}(y)} d\mathcal{H}^{n-1} (y),
\end{split}
\]\
where we used  Lemma \ref{LemmaJacobian2}. 

Recalling that $\sigma = \partial_s u^0$, this proves \eqref{formulaV}.

To prove \eqref{estV} we use that  $V^\theta$ is a single layer potential on the surface $\Gamma^t_\theta$, with charge density $(\ltt \circ\bar\pi_1) F \frac{(N\cdot \nu^t_\theta)}{u^0_{ss}}$. Note that Proposition \ref{blank} yields $\| \lambda^t \|_{C^{k,\alpha}(\mathcal Z)} \le C$ and hence $\{\sigma =\theta \lambda^t(x)\}$ is $C^{k,\alpha}$ and its normal vector $\nu^t_\theta$ is $C^{k-1,\alpha}$. Recall also that $u^0\in C^{k+1,\alpha}(\overline{\Omega^0})$ and that $u^0_{ss} \approx -(N\cdot \nu^0)^2\Delta h^0>0$ in a neighborhood of $\Gamma^0$.
Then, if 
\[F\,   \ltt \circ \bar\pi_1 \in C^{k-1,\alpha}\]
it follows from Theorem \ref{singlelayerthm} that $V^{\theta}$ is $C^{k,\alpha}(\overline{\Omega^t_\theta})$ and in  particular $V^{\theta}$ is $C^{k,\alpha}(\overline{\Omega^t})$ with the estimate \eqref{estV}.
\end{proof}

Recalling \eqref{vt1}, and using Lemma \ref{lemapproxsingle} with $F= -\Delta h^0$, we may now write 
\begin{equation}\label{decompvt1}
v_1^t(x) = \int_0^1 V^\theta(x) \,d\theta
\end{equation}
where 
\begin{equation}\label{decompvt12}
V^\theta(x) := \int_{\Gamma^t_\theta}  \left(-\Delta h^0\, \ltt \circ \bar\pi_1\, \frac{(N\cdot \nu^t_\theta)}{\partial_{ss} u^0}\right)(y) P(x-y)\,dy .
\end{equation}

The following lemma is a straightforward consequence of Theorem \ref{singlelayerthm} in the Appendix.

\begin{lemma}\label{sobolev}
Let $V^\theta$ be as in \eqref{decompvt12}.
We have
\begin{equation}\label{eqsobolev}
-\partial_{\nu^t_\theta, {\rm out}} V^\theta = \frac{1}{2}(N\cdot\nu^t_\theta) \frac{-\Delta h^0}{\partial_{ss} u^0}  \,\frac{\lambda^t}{t} \circ \bar \pi_1 + \partial_{\nu^t_\theta,0} V^\theta\quad \mbox{on  }\Gamma^t_\theta,
\end{equation} 
where
\[
\|\partial_{\nu^t_\theta,0} V^\theta \|_{C^{k-1,\alpha}(\Gamma^t_\theta)} \le C(\boldsymbol{\mathcal C})\|\ltt\|_{C^{k-2,\alpha}(\mathcal Z)}.
\]
\end{lemma}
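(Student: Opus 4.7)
The plan is to recognize \eqref{eqsobolev} as the standard jump relation for the outer normal derivative of a single layer potential, with the estimate on the remainder following from the regularizing property of the associated double-layer-type principal value operator. Both ingredients are supplied by Theorem \ref{singlelayerthm} in the appendix.

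Denoting the density
\[
\phi^\theta(y) := \Big(-\Delta h^0\,(\ltt\circ\bar\pi_1)\,\tfrac{N\cdot\nu^t_\theta}{\partial_{ss} u^0}\Big)(y), \qquad y\in\Gamma^t_\theta,
\]
so that $V^\theta(x) = \int_{\Gamma^t_\theta}\phi^\theta(y) P(x-y)\,d\mathcal{H}^{n-1}(y)$, I would first apply the standard jump formula (Theorem \ref{singlelayerthm}, as used for instance in \eqref{normdersingle}) to write
\[
-\partial_{\nu^t_\theta,\mathrm{out}} V^\theta(x) \;=\; \tfrac{1}{2}\phi^\theta(x) + \mathcal T^\theta[\phi^\theta](x), \qquad x\in\Gamma^t_\theta,
\]
where $\mathcal T^\theta[\phi](x) := \int_{\Gamma^t_\theta}\phi(y)\,\nu^t_\theta(x)\cdot\nabla P(x-y)\,d\mathcal{H}^{n-1}(y)$. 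The first term on the right-hand side is precisely the first term on the right of \eqref{eqsobolev}, so I would set $\partial_{\nu^t_\theta,0}V^\theta := \mathcal T^\theta[\phi^\theta]$.

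For the quantitative estimate, two bounds must be combined. First, the prefactor $(-\Delta h^0)(N\cdot\nu^t_\theta)/\partial_{ss}u^0$ is bounded in $C^{k-1,\alpha}(\Gamma^t_\theta)$ uniformly in $\theta\in(0,1)$ and $t\in(0,t_\circ)$: this uses $h^0\in C^{k+1,\alpha}$, $\tilde u^0\in C^{k+1,\alpha}(\overline{\Omega^0})$ from Lemma \ref{lemhigherregtildeu}, the lower bound $\partial_{ss}u^0\gtrsim\rrho$ near $\Gamma^0$ coming from \eqref{changeofcords} and \eqref{rho1}, smoothness of $N$ and of the coordinate system $(z,\sigma)$, and the uniform bound $\|\lambda^t\|_{C^{k,\alpha}(\mathcal Z)}\le C$ from Proposition \ref{blank}. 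Multiplying by $\ltt\circ\bar\pi_1$ then yields $\|\phi^\theta\|_{C^{k-2,\alpha}(\Gamma^t_\theta)}\le C(\boldsymbol{\mathcal C})\|\ltt\|_{C^{k-2,\alpha}(\mathcal Z)}$. Second, the key point is the gain of one derivative for the double-layer operator $\mathcal T^\theta$ given by Theorem \ref{singlelayerthm}(iii), i.e.\
\[
\|\mathcal T^\theta[\phi^\theta]\|_{C^{k-1,\alpha}(\Gamma^t_\theta)} \le C(\boldsymbol{\mathcal C})\,\|\phi^\theta\|_{C^{k-2,\alpha}(\Gamma^t_\theta)}.
\]
Chaining the two bounds gives the lemma.

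The only non-routine point is securing uniformity of the constant in the double-layer estimate across the foliation $\{\Gamma^t_\theta\}_{\theta\in(0,1)}$. I would address this by observing that $\Gamma^t_\theta=\{\sigma=\theta\lambda^t(z)\}$ is a uniformly $C^{k,\alpha}$ family of surfaces, with a uniform touching-ball radius as $\theta\to 0^+$ and $\theta\to 1^-$, since the $C^{k,\alpha}$-norm and curvature bounds depend linearly on $\theta\|\lambda^t\|_{C^{k,\alpha}}$ and the latter is controlled by Proposition \ref{blank}. Consequently the constant in Theorem \ref{singlelayerthm}(iii) applied to $\Gamma^t_\theta$ depends only on $\boldsymbol{\mathcal C}$.
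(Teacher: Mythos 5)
Your proposal is correct and follows essentially the same route as the paper: the paper's proof simply recalls that $(N\cdot\nu^t_\theta)$, $-\Delta h^0$, $\partial_{ss}u^0>\rrho/2$ and $\bar\pi_1$ are uniformly $C^{k-1,\alpha}$ and then invokes Theorem~\ref{singlelayerthm} (ii)--(iii), exactly as you do. Your additional remarks on the uniformity of the constants over the foliation $\{\Gamma^t_\theta\}$ are a sound (and welcome) elaboration of what the paper leaves implicit, and the only discrepancy with the paper is an immaterial sign convention in the definition of $\partial_{\nu^t_\theta,0}V^\theta$.
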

\begin{proof}
We recall that $(N\cdot\nu^t_\theta)$, $-\Delta h^0$, $\partial_{ss} u^0> \rrho/2>0$, and   $\bar\pi_1^{-1}: \mathcal Z\rightarrow \Gamma^t_\theta$, $-\Delta h^0$ are all $C^{k-1, \alpha}$ functions. Then, the lemma follows from Theorem \ref{singlelayerthm} (ii)-(iii).
\end{proof}

The next lemma will be used to control the ``difference'' 
\[
-\partial_{N} V^\theta (z, \eta^t(z)) - \textstyle\frac{1}{2} \, \frac{\lambda^t}{t}(z) .
\]
\begin{lemma}\label{controldif}
Let $V^\theta$ as in \eqref{decompvt12}. We have
\[
\big\|-\partial_{N} V^\theta (\cdot , \eta^t(\cdot)) -\textstyle \frac{1}{2} \,\ltt \big\|_{C^{k-1,\alpha}(\mathcal Z)} \le C(\boldsymbol{\mathcal C})  + \frac{1}{100} \|\ltt\|_{C^{k-1,\alpha}(\mathcal Z)}.
\]
\end{lemma}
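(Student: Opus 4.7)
The plan is to decompose $-\partial_N V^\theta(z,\eta^t(z))$ by first applying the jump formula of Lemma~\ref{sobolev} on the auxiliary surface $\Gamma^t_\theta$, then transferring the evaluation from $\Gamma^t_\theta$ back to $\Gamma^t$. Write $N=(N\cdot\nu^t_\theta)\nu^t_\theta+\omega_\theta$, where $\omega_\theta:=N-(N\cdot\nu^t_\theta)\nu^t_\theta$ is tangent to $\Gamma^t_\theta$, and let $y_\theta(z)\in\Gamma^t_\theta$ be the unique point on the $N$-integral curve through $(z,\eta^t(z))$; in $(z,\sigma)$-coordinates $y_\theta(z)=(z,\theta\lambda^t(z))$, and in $(z,s)$-coordinates it has the form $(z,\bar s_\theta(z))$ with $|\eta^t(z)-\bar s_\theta(z)|=O((1-\theta)t)$. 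Combining the decomposition with Lemma~\ref{sobolev}, on the outer face of $\Gamma^t_\theta$ one has
\[
-\partial_N V^\theta(y_\theta(z))^{+} \;=\; \tfrac12\,a(z)\,\ltt(z) \;+\; (N\cdot\nu^t_\theta)\,\partial_{\nu^t_\theta,0}V^\theta(y_\theta(z)) \;-\; \omega_\theta\cdot\nabla V^\theta_{\mathrm{out}}(y_\theta(z)),
\]
with $a(z):=\bigl((N\cdot\nu^t_\theta)^2\,\tfrac{-\Delta h^0}{\partial_{ss}u^0}\bigr)(y_\theta(z))$.

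The main term is already close to $\tfrac12\ltt$: since $\tilde u^0=|\nabla\tilde u^0|=0$ on $\Gamma^0$ forces $\partial_{ss}u^0=-(N\cdot\nu^0)^2\Delta h^0$ there, one has $a\equiv 1$ on $\Gamma^0$, and because $y_\theta(z)$ is $O(t)$-close to $\Gamma^0$ with all the data $C^{k-1,\alpha}$-bounded by $\boldsymbol{\mathcal C}$, we obtain $\|a-1\|_{C^{k-1,\alpha}(\mathcal Z)}\le C(\boldsymbol{\mathcal C})\,t$. Using $\|\ltt\|_{C^0}\le C$ (from Proposition~\ref{blank} combined with \eqref{crucialrelation}) and the H\"older product rule, this term contributes at most $Ct\|\ltt\|_{C^{k-1,\alpha}}+C$. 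The ``principal-value'' term is bounded in $C^{k-1,\alpha}$ by $C\|\ltt\|_{C^{k-2,\alpha}}$ thanks to Lemma~\ref{sobolev}, and interpolation with $\|\ltt\|_{C^0}\le C$ absorbs it as $\epsilon\|\ltt\|_{C^{k-1,\alpha}}+C_\epsilon$. For the tangential piece, $|\omega_\theta|\le\sqrt{2\varepsilon_o}$ by \eqref{defdelta0} while Lemma~\ref{lemapproxsingle} yields $\|\nabla V^\theta\|_{C^{k-1,\alpha}(\overline{\Omega^t_\theta})}\le C\|\ltt\|_{C^{k-1,\alpha}}$, so choosing $\varepsilon_o$ small gives $\|\omega_\theta\cdot\nabla V^\theta\|_{C^{k-1,\alpha}}\le\tfrac{1}{400}\|\ltt\|_{C^{k-1,\alpha}}$.

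The main obstacle is the remaining transfer step: controlling
\[
E(z)\;:=\;-\partial_N V^\theta(z,\eta^t(z))\;-\;\bigl(-\partial_N V^\theta(z,\bar s_\theta(z))^+\bigr)
\]
in $C^{k-1,\alpha}(\mathcal Z)$. The two evaluation points lie on the same $N$-integral curve at distance $O((1-\theta)t)$, separated by the thin strip $\{\theta\lambda^t(z)<\sigma<\lambda^t(z)\}$ on which $V^\theta$ is harmonic. Writing
\[
E(z)\;=\;-\int_{\bar s_\theta(z)}^{\eta^t(z)}\partial_{ss}V^\theta(z,s)\,ds,
\]
a pointwise bound would lose a derivative; instead we use the harmonicity of $V^\theta$ on the strip to replace $\partial_{ss}V^\theta$ by tangential-in-$z$ second derivatives of $V^\theta$ modulo lower order terms, and then integrate by parts in the $z$-variables, trading a derivative on $V^\theta$ for one on the $C^{k,\alpha}$ endpoint functions $\bar s_\theta$ and $\eta^t$. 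Combined with $\|\nabla V^\theta\|_{C^{k-1,\alpha}(\overline{\Omega^t_\theta})}\le C\|\ltt\|_{C^{k-1,\alpha}}$, this yields $\|E\|_{C^{k-1,\alpha}(\mathcal Z)}\le C(\boldsymbol{\mathcal C})\,t\,\|\ltt\|_{C^{k-1,\alpha}}$, which is $\le\tfrac{1}{400}\|\ltt\|_{C^{k-1,\alpha}}$ for $t\le t_\circ(\boldsymbol{\mathcal C})$ small.

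Collecting the four contributions and choosing $\varepsilon_o$, $\epsilon$, and $t_\circ$ small enough (all depending only on $\boldsymbol{\mathcal C}$) yields the stated inequality.
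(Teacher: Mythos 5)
Your decomposition into a main jump term, a principal-value term, a tangential term, and a transfer term $E$ matches the structure of the paper's proof (their $I_2$ collects your first three pieces, their $I_1$ is your $E$), and your treatment of the first three is essentially theirs: the jump formula of Lemma~\ref{sobolev} on $\Gamma^t_\theta$, smallness of $N-\nu^t_\theta$ via $\varepsilon_o$, and interpolation to absorb the compact operator $\partial_{\nu^t_\theta,0}$. One caveat there: your claim $\|a-1\|_{C^{k-1,\alpha}(\mathcal Z)}\le C t$ is not available, since it would require $\|\lambda^t\|_{C^{k,\alpha}(\mathcal Z)}\le Ct$, which is essentially the conclusion of Proposition~\ref{lambdat/t} being sought; but you only need $\|a-1\|_{L^\infty}$ small and $\|a-1\|_{C^{k-1,\alpha}}$ bounded, which together with $\|\ltt\|_{L^\infty}\le C$ and the H\"older product rule give $\epsilon\|\ltt\|_{C^{k-1,\alpha}}+C$, exactly as in the paper.

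The genuine gap is the transfer term $E$. You correctly identify that differentiating $\int_{\bar s_\theta}^{\eta^t}\partial_{ss}V^\theta\,ds$ in $z$ up to order $k-1$ brings in $D^{k+1}V^\theta$, but the proposed remedy --- harmonicity of $V^\theta$ in the strip to convert $\partial_{ss}V^\theta$ into tangential second derivatives, followed by ``integration by parts in the $z$-variables'' --- does not yield an estimate: there is no integral over $z$ against which to integrate by parts when bounding a H\"older norm of a function of $z$, and the variant via differentiation under the integral sign either reintroduces $D^{k+1}V^\theta$ or requires $D^{k+1}\eta^t$, which does not exist since $\eta^t\in C^{k,\alpha}$ only. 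The paper's resolution is different and simpler: accept the loss of a derivative. Writing the difference as $t\int_\theta^1\partial_\sigma\partial_N V^\theta\big(\cdot,\bar\theta\lambda^t\big)\,\ltt\,d\bar\theta$, the product rule gives a bound by $Ct\big(\|V^\theta\|_{C^{k+1,\alpha}}\|\ltt\|_{L^\infty}+\|D^2V^\theta\|_{L^\infty}\|\ltt\|_{C^{k-1,\alpha}}\big)$; the single layer estimate applied one order higher costs only $\|V^\theta\|_{C^{k+1,\alpha}}\le C\|\ltt\|_{C^{k,\alpha}}\le Ct^{-1}\|\lambda^t\|_{C^{k,\alpha}}\le Ct^{-1}$ by Proposition~\ref{blank}, and the explicit prefactor $t$ cancels this loss, giving a bound by $C(\boldsymbol{\mathcal C})$. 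Your claimed output $\|E\|_{C^{k-1,\alpha}}\le Ct\|\ltt\|_{C^{k-1,\alpha}}$ is therefore not established as written; replacing your transfer argument by this Taylor/product-rule computation closes the proof.
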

\begin{proof}
{\em Step 1.} We estimate the $C^{k-1,\alpha}(\mathcal Z)$ norm of 
\[
I_1(z) : =\partial_{N} V^\theta(z, \lambda^t(z)) - \partial_{N,{\rm out}} V^\theta(z,\theta \lambda^t(z)) 
\]
To do it we write this difference as 
\[I_1 = t \int_{\theta}^{1} d\bar \theta \  \partial_\sigma \partial_{N} V^s\big(x', \bar\theta \lambda^t(x')\big) \ltt(x').\]
Then,  using  Lemma \ref{lemapproxsingle} we obtain 
\begin{equation}\label{I1zzz}
\begin{split}
 \|I_1\|_{C^{k-1,\alpha}(\mathcal Z)} &\le C t \big( \|V^\theta\|_{C^{k+1,\alpha}(\Omega^t_\theta)}\|\ltt\|_{L^{\infty}(\mathcal Z)} +  \|V^\theta\|_{L^\infty(\Omega^t_\theta)}\|\ltt\|_{C^{k-1,\alpha}(\mathcal Z)} \big) 
 \\
 &\le   C t\|\ltt\|_{C^{k,\alpha}(\mathcal Z)} \|\ltt\|_{L^{\infty}(\mathcal Z)} \le  C \|\lambda^t\|_{C^{k,\alpha}(\mathcal Z)} \|\ltt\|_{L^{\infty}(\mathcal Z)} \le C,
\end{split}
\end{equation}
where $C= C(\boldsymbol{\mathcal C})$. Here we have used the fact that $ \|\ltt\|_{L^{\infty}(\mathcal Z)} \le C$, and information that follows from Proposition \ref{blank}.

{\em Step 2.} We next estimate the $C^{k-1,\alpha}(\mathcal Z)$ norm of 
\[
I_2(z):=\partial_{N, {\rm out}} V^\theta(z,\theta \lambda^t(z)) -\textstyle \frac{1}{2} \,\ltt(z).
\]
Using \eqref{eqsobolev} we have
\[
I_2(z) =  (N-\nu^t_\theta) \cdot \nabla_{\rm out} V^\theta(z,\theta \lambda^t(z)) + \frac{1}{2} \left((N\cdot\nu^t_\theta) \frac{-\Delta h^0}{\partial_{ss} u^0} -1\right) \ltt\circ \bar \pi_1 + \partial_{\nu^t_\theta,0} V^\theta.
\]
Using the estimates from Lemma \ref{sobolev} and  \ref{lemapproxsingle}  we have
\[
\|\nabla V^\theta\|_{C^{k-1,\alpha}(\Gamma^t_\theta) } \le \| V^\theta\|_{C^{k,\alpha}(\overline{\Omega^t_\theta}) } \le C \|\ltt\|_{C^{k-1,\alpha}(\mathcal Z) }.
\]
In addition,
\[
|N-\nu^t_\theta| \approx 0, \qquad (N\cdot\nu^t_\theta) \approx 1,\qquad \mbox{and}
\qquad \frac{- \Delta h^0}{\partial_{ss} u^0} \approx 1 \qquad \mbox{on }\Gamma^t_\theta
\]
for $t\in(0,t_\circ)$, where $X \approx Y$ means  that ``$X$ is arbitrarily close to $Y$''  provided that $t_\circ$ and $\varepsilon_o$ are chosen small enough depending only of $\boldsymbol{\mathcal C}$. 

Therefore, using the estimate in Lemma \ref{sobolev}  and an interpolation inequality we obtain
\begin{equation}\label{I2zzz}
\begin{split}
 \|I_2\|_{C^{k-1,\alpha}(\mathcal Z)} &\le \frac{\epsilon}{2} \|\ltt\|_{C^{k-1,\alpha}(\mathcal Z)} + C \|\ltt\|_{C^{k-2,\alpha}(\mathcal Z)} 
 \\
 &\le    \epsilon \|\ltt\|_{C^{k-1,\alpha}(\mathcal Z)} +C \|\ltt\|_{L^{\infty}(\mathcal Z)} 
 \le \epsilon \|\ltt\|_{C^{k-1,\alpha}(\mathcal Z)} +C
\end{split}
\end{equation}
where $\epsilon>0$ can be taken arbitrarily small by decreasing, if necessary, $t_\circ$ and $\varepsilon_o$.

{\em Step 3.} We conclude  by the triangle inequality that 
\[
\big\|\partial_{N} V^\theta (\cdot , \eta^t(\cdot)) -\textstyle  \frac{1}{2} \,\ltt \big\|_{C^{k-1,\alpha}(\mathcal Z)} \le \|I_1\|_{C^{k-1,\alpha}(\mathcal Z)} +\|I_2\|_{C^{k-1,\alpha}(\mathcal Z)} 
\]
and the lemma follows from \eqref{I1zzz} and \eqref{I2zzz}, setting $\epsilon = \frac{1}{100}$.
\end{proof}

The three Lemmas \ref{lemapproxsingle}, \ref{sobolev}, and \ref{controldif} will be used to treat the term $\partial_N v_1$. As a counterpart, the next lemma will be used to treat the term $\partial_N v_2$.
\begin{lemma}\label{controlvt2}
We have
\[
\| v^t_2\|_{C^{k,\alpha}(\overline{\Omega^t})}\le C(\boldsymbol{\mathcal C}).
\]
\end{lemma}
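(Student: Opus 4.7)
The plan is to recognise $v^t_2$ as the Newtonian potential of a compactly supported source and invoke Lemma \ref{potofchar} directly. From the integral formula \eqref{vt2} and the identity $-\Delta P =\delta_0$, one reads off that $v^t_2$ is (modulo an additive constant, resp.\ a logarithmic term in dimension two) the unique solution in $\R^n$ of
\[
\Delta v^t_2 \ = \ \Delta\delta_t h^0\,\chi_{\Omega^t}.
\]
By \eqref{compactsupp} this source is compactly supported in $B_{\RR}$, and since $h\in C^{k+1,\alpha}([-1,1]\times B_{\RR})$, writing $\delta_t h^0 = \int_0^1 \dot h^{st}\,ds$ gives
\[
\|\Delta\delta_t h^0\|_{C^{k-1,\alpha}(B_{\RR})}\ \le\ C(\boldsymbol{\mathcal C})
\]
uniformly in $t\in(-t_\circ,t_\circ)$.

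I would then apply Lemma \ref{potofchar} with $U:=\R^n\setminus\Omega^t$, $f:=\Delta\delta_t h^0$, $R:=\RR$ and $m:=k-2$ (so that $m+2=k$; this covers $k\ge 2$). Each hypothesis holds uniformly in $t\in(-t_\circ,t_\circ)$: the contact set $U$ is bounded and contained in $B_{\RR}$ thanks to \eqref{rho1} together with the Hausdorff estimate in Proposition \ref{blank}(ii); its boundary $\partial U=\Gamma^t$ satisfies $\|\Gamma^t\|_{C^{k,\alpha}_{\rrho/4}}\le C_o$ by Proposition \ref{blank}(i); and $\|f\|_{C^{k-2,\alpha}(\overline{B_{2\RR}})}\le C(\boldsymbol{\mathcal C})$ by the above. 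The conclusion of the lemma then delivers
\[
\|v^t_2\|_{C^{k,\alpha}(\overline{B_{2\RR}}\cap\Omega^t)}+\|v^t_2\|_{C^{k,\alpha}(\overline{\R^n\setminus\Omega^t})}\ \le\ C(\boldsymbol{\mathcal C}).
\]

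It only remains to estimate $v^t_2$ on the unbounded portion $\overline{\Omega^t}\setminus B_{2\RR}$: there $v^t_2$ is harmonic (the source vanishes outside $B_{\RR}$) and enjoys the prescribed decay at infinity inherited from its representation as a Newtonian convolution of a compactly supported function, so standard interior estimates for harmonic functions give $C^{k,\alpha}$ bounds (after subtracting the explicit logarithmic contribution when $n=2$). Combining the two regions produces $\|v^t_2\|_{C^{k,\alpha}(\overline{\Omega^t})}\le C(\boldsymbol{\mathcal C})$. The one mildly delicate point is the borderline case $k=1$, where $m=-1$ falls outside the range of Lemma \ref{potofchar}; in that regime one can instead appeal to $W^{2,p}$ estimates for the Poisson equation with bounded compactly supported right-hand side together with Sobolev embedding, which together with the $C^{1,\alpha}$ regularity of $\Gamma^t$ yields the required $C^{1,\alpha}$ bound.
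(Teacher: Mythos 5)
Your proof is correct and is essentially the paper's own argument: the paper likewise identifies $\Delta v^t_2=\Delta\delta_t h^0\,\chi_{\Omega^t}$ and applies Lemma \ref{potofchar} with the contact set as $U$, using the uniform $C^{k,\alpha}$ regularity of $\Gamma^t$ from Proposition \ref{blank} and the bound $\|\Delta\delta_t h^0\|_{C^{k-2,\alpha}}\le \|h\|_{C^{k+1,\alpha}([-1,1]\times\R^n)}$. Your extra care about the region $|x|>2\RR$ and the borderline case $k=1$ goes beyond the paper's two-line proof (which silently ignores both); the only small correction is that $\delta_t h^0=\int_0^1\dot h^{st}\,ds$ gives $\Delta\delta_t h^0$ uniformly in $C^{k-2,\alpha}$, not $C^{k-1,\alpha}$, which is exactly the regularity you actually use when invoking Lemma \ref{potofchar} with $m=k-2$.
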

\begin{proof}
Recalling that $\Delta v^t_2  = \Delta\delta_th^0 \chi_{\Omega^t}$ and that $\Gamma^t=\partial\Omega^t$ are (uniformly)  $C^{k,\alpha}$, it  follows from Lemma \ref{potofchar}
that 
\[
 \| v^t_2\|_{C^{k,\alpha}(\overline{\Omega^t})}\le C(\boldsymbol{\mathcal C})\| \Delta\delta_th^0\|_{C^{k-2,\alpha}(\R^n)}.
\]
Using the trivial estimate
\[
\| \Delta\delta_th^0\|_{C^{k-2,\alpha}(\R^n)} \le \| h\|_{C^{k+1,\alpha}([-1,1]\times\R^n)}
\]
the lemma follows.
\end{proof}

We now can give the 

\begin{proof}[Proof of Lemma \ref{controldifmain}]
We have
\[
\partial_N v^t(z, \lambda^t(z))  = (\partial_N v^t_1 +\partial_N v^t_2)(z, \lambda^t(z))
\]
and by \eqref{decompvt1}--\eqref{decompvt12} we have 
\[
\partial_N v^t_1(z, \lambda^t(z)) =  \int_0^1  \partial_N V^\theta (z, \lambda^t(z))d\theta.
\]
Hence, by the triangle inequality, and using Lemmas \ref{controldif} and \ref{controlvt2} 
\[
\begin{split}
\left\| \partial_N v^t(\cdot , \lambda^t) -\textstyle \frac 1 2 \,\ltt(z)  \right\|_{C^{k-1,\alpha}(\mathcal Z)}  &\le
 \int_0^1 d\theta  \left\| \partial_N V^\theta (\cdot , \lambda^t)   -\textstyle  \frac 1 2 \,\ltt  \right\|_{C^{k-1,\alpha}(\mathcal Z)}  +
 \\
 &\hspace{50mm}+  \left\| \partial_N v^t_2 (\cdot , \lambda^t)  \right\|_{C^{k-1,\alpha}(\mathcal Z)}
 \\
 &\le
C  + \textstyle \frac{1}{100} \|\ltt\|_{C^{k-1,\alpha}(\mathcal Z)}  +  C\left\| \partial_N v^t_2 \right\|_{C^{k-1,\alpha}(\Gamma^t)}
\\
&\le C  +\textstyle  \frac{1}{100} \|\ltt\|_{C^{k-1,\alpha}(\mathcal Z)},
\end{split}
\] 
where $C= C(\boldsymbol{\mathcal C})$.
\end{proof}

We complete here the
\begin{proof}[Proof of Proposition \ref{lambdat/t}]
Recall \eqref{crucialrelation}, that is  
$
\ltt(z) = -\partial_N v^t(z, \lambda^t(z)) .
$
Subtracting $ \frac 1 2 \,\ltt(z)$ to both sides and using Lemma \ref{controldifmain} we obtain 
\[
\frac 1 2 \big\|\ltt \big\|_{C^{k-1,\alpha}(\mathcal Z)} \le \big\|-\partial_{N} v^t (\cdot , \lambda^t(\cdot)) - \frac{1}{2} \,\ltt \big\|_{C^{k-1,\alpha}(\mathcal Z)} \le C(\boldsymbol{\mathcal C})  +  \frac{1}{100} \|\ltt\|_{C^{k-1,\alpha}(\mathcal Z)}
\]
as desired.
\end{proof}

\subsection{Estimate on $\frac 1 t (\ltt-\dot\lambda^0)$}

The goal of this subsection is to prove  the following regularity result (without a priori assumptions)
\begin{proposition}\label{delta2lambda}
We have 
\[
\left\| \textstyle \frac {1}{t} \left( \ltt-\dot\lambda^0\right)  \right\|_{C^{k-2,\alpha}(\mathcal Z)} \le C(\boldsymbol{\mathcal C}).
\]
\end{proposition}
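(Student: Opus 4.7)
The plan is to mirror the proof of Proposition~\ref{lambdat/t}, applied one order higher, starting from identity~\eqref{crucialrelationbis}. The first term on its right-hand side equals $\ltt(z)$ times the average of a second-order derivative of $v$ along the segment from $\sigma=0$ to $\sigma=\lambda^t(z)$. Since Schauder estimates applied to \eqref{eqnv} give $v\in C^{k,\alpha}(\overline{\Omega^0})$ with norm controlled by $\boldsymbol{\mathcal C}$, and since $\|\ltt\|_{C^{k-1,\alpha}(\mathcal Z)}\le C(\boldsymbol{\mathcal C})$ by Proposition~\ref{lambdat/t}, this piece is bounded in $C^{k-2,\alpha}(\mathcal Z)$ by $C(\boldsymbol{\mathcal C})$.

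The bulk of the work is to prove the analog of Lemma~\ref{controldifmain} at this order, namely
\[
\left\| -\partial_N w^t\bigl(\cdot,\lambda^t(\cdot)\bigr) - \frac{1}{2}\cdot\frac{\ltt-\dot\lambda^0}{t} \right\|_{C^{k-2,\alpha}(\mathcal Z)} \le C(\boldsymbol{\mathcal C}) + \frac{1}{100}\left\|\frac{\ltt-\dot\lambda^0}{t}\right\|_{C^{k-2,\alpha}(\mathcal Z)}.
\]
To establish this, I use \eqref{eqnwt} to decompose $w^t = W_{\rm sing}^t + W_{\rm app}^t + W_{\rm sol}^t + \mathrm{const}$, where $W_{\rm sing}^t$ is the exact single-layer Newtonian potential on $\Gamma^0 = \Gamma^t_0$ with density $\frac{1}{t}\frac{\partial_N v}{N\cdot\nu^0}$; $W_{\rm app}^t$ is the Newtonian potential of $\frac{1}{t}\bigl(\frac{1}{t}\Delta h^0 - \Delta\dot h^0\bigr)\chi_{\Omega^0\setminus\Omega^t}$, which, by Lemma~\ref{lemapproxsingle} applied with $F = \Delta h^0/t - \Delta\dot h^0$, rewrites as a $\theta$-average over $(0,1)$ of exact single layers on $\Gamma^t_\theta$; and $W_{\rm sol}^t$ is the Newtonian potential of $-\frac{1}{2}\Delta\delta_t^2 h^0\chi_{\Omega^t}$, bounded in $C^{k,\alpha}(\overline{\Omega^t})$ by Lemma~\ref{potofchar}.

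For each exact-single-layer component I apply the jump formula (Theorem~\ref{singlelayerthm}, exactly as in Lemma~\ref{sobolev}) at its supporting surface $\Gamma^t_\theta$ and then transport the result to $\Gamma^t = \Gamma^t_1$ via an $I_1$-type estimate as in Lemma~\ref{controldif}. The key cancellation is that, combining the infinitesimal limit of~\eqref{crucialrelation} with~\eqref{changeofcords} (which gives $\partial_{ss}u^0 = -(N\cdot\nu^0)^2\Delta h^0$ on $\Gamma^0$), the effective single-layer densities coming from $W_{\rm sing}^t$ and from the $\theta\to 0$ limit of the slices of $W_{\rm app}^t$ combine to produce $(N\cdot\nu^0)^{-2}\cdot\frac{\ltt-\dot\lambda^0}{t}$ as the leading density, up to corrections that are lower-order in $t$ or $\theta$. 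The residual errors---transport contributions, $\theta$-dependent corrections to the density, the tangential piece $\omega = N-(N\cdot\nu^0)\nu^0$, the solid term $W_{\rm sol}^t$, and the $-\frac{1}{t}\Delta\dot h^0\chi_{\Omega^0\setminus\Omega^t}$ piece of $W_{\rm app}^t$ (which contributes a $C^{k-1,\alpha}$-bounded approximate single layer with density of order $\ltt \Delta \dot h^0$ via Lemma~\ref{lemapproxsingle})---are controlled in $C^{k-2,\alpha}$ by $C(\boldsymbol{\mathcal C})$ plus, after interpolation and choosing $\varepsilon_o$ in \eqref{defdelta0} and $t_\circ$ small, a fraction $\frac{1}{100}$ of $\|\frac{\ltt-\dot\lambda^0}{t}\|_{C^{k-2,\alpha}(\mathcal Z)}$. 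Inserting the resulting bound into the rewritten \eqref{crucialrelationbis} and absorbing the small term as in Proposition~\ref{lambdat/t} yields the proposition.

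The main obstacle is tracking the cancellation just described: the boundary charge $\frac{1}{t}\frac{\partial_N v}{N\cdot\nu^0}\mathcal{H}^{n-1}\restriction_{\Gamma^0}$ is of order $1/t$ and the bulk source $\frac{1}{t^2}\Delta h^0\chi_{\Omega^0\setminus\Omega^t}$ is pointwise of order $1/t^2$, yet only their combined effect produces an $O(1)$ source driven by $(\ltt-\dot\lambda^0)/t$. Preserving $C^{k-2,\alpha}$ regularity through this cancellation requires a careful splitting of the approximate-single-layer density into its $\theta$-constant leading part---which cancels exactly against the boundary charge---and higher-order $\theta$-dependent corrections that can be handled separately; this step is the direct counterpart of the delicate analysis carried out in Lemmas~\ref{lemapproxsingle}--\ref{controldif}, now performed one derivative level down.
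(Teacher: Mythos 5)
Your overall architecture is the paper's: reduce the proposition to a second-order analogue of Lemma~\ref{controldifmain} for $\partial_N w^t$ (this is the paper's Lemma~\ref{controldifmainw}), decompose $w^t$ into potential-theoretic pieces using \eqref{eqnwt}, extract the main jump term $\frac12\,\frac{\ltt-\dot\lambda^0}{t}$, and absorb the remainder by interpolation after shrinking $\varepsilon_o$ and $t_\circ$. Your treatment of the first term of \eqref{crucialrelationbis}, of the solid term, and of the $\frac1t\Delta\dot h^0\chi_{\Omega^0\setminus\Omega^t}$ piece is correct and matches the paper's Lemma~\ref{lemwt1}.

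The gap is in the mechanism you propose for the crucial cancellation between the boundary charge $\frac1t\frac{\partial_Nv}{N\cdot\nu^0}\,\mathcal H^{n-1}\restriction_{\Gamma^0}$ and the bulk charge $\frac{1}{t^2}\Delta h^0\chi_{\Omega^0\setminus\Omega^t}$. Applying the jump formula to each slice $V^\theta$ on its surface $\Gamma^t_\theta$ and to the boundary single layer on $\Gamma^0$ separately only cancels the \emph{local} halves $\pm\frac12 f$ of the normal derivatives; the nonlocal parts $\partial_{\nu,0}V^\theta$ and the corresponding term for the boundary layer are each of size $O(1/t)$ (their densities are $O(1/t)$) and are supported on \emph{different surfaces}, so they do not ``combine'' at the level of densities. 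Nor can the discrepancy be absorbed by an $I_1$-type transport estimate as in Lemma~\ref{controldif}: that estimate pays one extra derivative on the density, and $\|\ltt\|_{C^{k,\alpha}(\mathcal Z)}\le C/t$, so the transported error is $t\cdot O(1/t^2)=O(1/t)$ rather than $O(1)$. What is missing is the paper's intermediate object: an exact single layer on $\Gamma^0$ with density $\frac1t(N\cdot\nu^0)\frac{\Delta h^0}{\partial\sigma/\partial s}\,\ltt$. Its difference with the boundary charge is an exact single layer on a \emph{single} surface whose density $\frac{1}{t(N\cdot\nu^0)}(\dot\lambda^0-\ltt)$ is precisely the unknown quantity; this yields the main term and the absorbable remainder (Lemma~\ref{wt22}). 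Its difference with the bulk term is a second-order Taylor remainder in $\sigma$, i.e.\ an approximate \emph{double} layer with $O(1)$ density proportional to $(\ltt)^2$, whose gradient is bounded in $C^{k-2,\alpha}$ only after rewriting $\partial_N P$ in divergence form and invoking single-layer regularity (Lemma~\ref{reg-approx-double-layer}). This double-layer step is a genuinely new ingredient at second order --- not a ``direct counterpart'' of Lemmas~\ref{lemapproxsingle}--\ref{controldif} --- and without it your estimate does not close.
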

Before proving Proposition \ref{delta2lambda}, let us give its main corollary
\begin{corollary}\label{coraccelerationexists}
There exist $\ddot \eta^0$ and $\ddot \lambda^0$ such that
\[
2\frac{\eta^t-\eta^0- t \dot \eta^0}{t^2} \rightarrow \ddot \eta^0 \qquad \mbox{and} \qquad 2\frac{\lambda^t- t \dot \lambda^0}{t^2} \rightarrow \ddot \lambda^0 \qquad\mbox{in }C^0(\mathcal Z) 
\]
as $t\downarrow 0$.
\end{corollary}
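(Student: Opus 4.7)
The plan is to mimic the proof of Corollary \ref{corvelocityexists}, now using Proposition \ref{delta2lambda} in place of Proposition \ref{lambdat/t} and Proposition \ref{propapriori2} in place of Proposition \ref{apriori1}. The uniform $C^{k-2,\alpha}(\mathcal Z)$ bound on $\frac{1}{t}(\lambda^t/t-\dot\lambda^0)$ provides compactness in $C^0(\mathcal Z)$ via Arzelà--Ascoli, while Proposition \ref{propapriori2} uniquely identifies any subsequential limit; hence the full limit exists.

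\textbf{Compactness and change of coordinates.} First, Proposition \ref{delta2lambda} is exactly the statement that the family
\[
2\,\frac{\lambda^t-t\dot\lambda^0}{t^2}\;=\;\frac{2}{t}\!\left(\frac{\lambda^t}{t}-\dot\lambda^0\right)
\]
is uniformly bounded in $C^{k-2,\alpha}(\mathcal Z)$ for $t\in(0,t_\circ)$. Since the map $(z,s)\mapsto(z,\sigma)$ is a $C^{k,\alpha}$-diffeomorphism of $U_\circ\cap\overline{\Omega^0}$ (because $\tilde u^0\in C^{k+1,\alpha}$ by Lemma \ref{lemhigherregtildeu}), a Taylor expansion of the change of variables $\sigma=\partial_N\tilde u^0(z,s)$ along $s=\eta^t(z)$, combined with the identity $\dot\lambda^0(z)=\partial_{ss}\tilde u^0(z,0)\,\dot\eta^0(z)$ already established in Corollary \ref{corvelocityexists}, transfers this bound to
\[
\left\|2\,\frac{\eta^t-\eta^0-t\dot\eta^0}{t^2}\right\|_{C^{k-2,\alpha}(\mathcal Z)}\le C(\boldsymbol{\mathcal C}).
\]
By Arzelà--Ascoli, along any sequence $t_m\downarrow 0$ we may extract a subsequence (still denoted $t_m$) along which $2(\eta^{t_m}-\eta^0-t_m\dot\eta^0)/t_m^2\to\ell_1$ and $2(\lambda^{t_m}-t_m\dot\lambda^0)/t_m^2\to\ell_2$ in $C^0(\mathcal Z)$.

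\textbf{Identifying the limits.} To apply Proposition \ref{propapriori2} with $\ddot\eta^0:=\ell_1$, we must verify hypothesis \eqref{convergencewtk}, namely that $\nabla w^{t_m}(x_m)\to\nabla w(x)$ whenever $x_m\to x$ with $x_m\in\overline{\Omega^{t_m}}$. This follows from the decomposition of $\Delta w^t$ in \eqref{Dt1}--\eqref{Dt2}: the bulk pieces converge by dominated convergence on the shrinking set $\Omega^0\setminus\Omega^t$, while the ``approximate single layer'' piece $\mathcal D^t_2$ is controlled uniformly up to $\Gamma^{t_m}$ by exactly the potential-theoretic machinery (Lemma \ref{lemapproxsingle}, Theorem \ref{singlelayerthm}, Lemma \ref{potofchar}) already used in Section \ref{sec4} to bound $\nabla v^t$; combining this with the uniform $C^{k,\alpha}_{\rrho/4}$ regularity of $\Gamma^{t_m}$ from Proposition \ref{blank} yields $C^{1,\alpha}$ compactness of $w^{t_m}$ on $\overline{\Omega^{t_m}}$, and the limit is the $w$ produced by Lemma \ref{distlimit}. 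Proposition \ref{propapriori2} then gives that $\ell_1$ satisfies the implicit equation \eqref{second2}, whose unique solvability (via \eqref{aprioritheta}) forces $\ell_1=\ddot\eta^0$ independently of the subsequence. Finally, taking two $t$-derivatives of $\sigma(z,\eta^t(z))=\lambda^t(z)$ at $t=0$ (which makes sense because $\tilde u^0\in C^{k+1,\alpha}$ and $k\ge 2$) yields
\[
\ddot\lambda^0(z)=\partial_{ss}\tilde u^0(z,0)\,\ddot\eta^0(z)+\partial_{sss}\tilde u^0(z,0)(\dot\eta^0(z))^2,
\]
so $\ell_2=\ddot\lambda^0$ is also independent of the subsequence. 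Since every sequence $t_m\downarrow 0$ admits a subsequence converging to the same pair $(\ddot\eta^0,\ddot\lambda^0)$, the full limits exist in $C^0(\mathcal Z)$.

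\textbf{Main obstacle.} The one nontrivial step is the verification of \eqref{convergencewtk}, since the distributional limit of $\Delta w^t$ computed in Lemma \ref{distlimit} has both a singular part on $\Gamma^0$ and a bulk part, and one needs uniform boundary regularity of $w^{t_m}$ up to $\Gamma^{t_m}$ (not merely the weak convergence to $w$). This is however a direct repackaging of the single- and double-layer estimates established in Section \ref{sec4}, applied to the decomposition $v^t=v^t_1+v^t_2+\mathrm{const}$ from \eqref{vt1}--\eqref{vt2} differentiated once in $t$.
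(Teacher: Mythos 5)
Your proof is correct and follows essentially the same route as the paper: Proposition \ref{delta2lambda} plus the $C^{k,\alpha}$ equivalence of the $(z,s)$ and $(z,\sigma)$ coordinates gives compactness via Arzel\`a--Ascoli, and Proposition \ref{propapriori2} (together with the change of variables for $\lambda$) identifies every subsequential limit uniquely. In fact you are slightly more careful than the paper's own two-line argument, which invokes Proposition \ref{propapriori2} without explicitly checking its hypothesis \eqref{convergencewtk}; your verification of that hypothesis via the uniform boundary gradient estimates for $w^t_1$, $w^t_{21}$, $w^t_{22}$ from Section \ref{sec4} is exactly what the paper implicitly relies on.
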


\begin{proof}
Let $t_p\downarrow 0$. 
Note that since  both coordinate systems $(z,s)$ and $(z,\sigma)$ are $C^{k,\alpha}$ the estimate of Proposition \ref{delta2lambda} yields
\[
\left\| \frac{\eta^t-\eta^0-t \dot \eta^0}{t^2}  \right\|_{C^{k-2,\alpha}(\mathcal Z)} \le C(\boldsymbol{\mathcal C}).
\]
Hence,  by  Arzel\`a-Ascoli there is a subsequence $t_{m}$ such that 
\[ 
2\frac{\eta^{t_m}-\eta^0-   t_m \dot \eta^0}{{t_m}^2}  \rightarrow \ell_1 \qquad \mbox{and} \qquad 2\frac{\lambda^{t_m}-t_m \dot \lambda^0}{{t_m}^2} \rightarrow  \ell_2 \qquad\mbox{in }C^0(\mathcal Z) 
\] 
for certain limit functions $\ell_1$ and $\ell_2$ in $C^{k-2,\alpha}(\mathcal Z)$.

Applying Proposition \ref{propapriori2} the limit  $\ell_1 $ must be  $\ddot \eta^0$, the unique solution to \eqref{defTheta}-\eqref{second2}. Using the change of variables between $s$ and $\sigma$ we obtain that there is also a unique possible limit $\ell_2(z) = \dot\lambda^0(z)$ with is independent of the subsequence.

In other words, the limits as $t\downarrow 0$ exist and they are denoted $\ddot \eta^0$ and $\ddot\lambda^0$.
\end{proof}

In view of  \eqref{crucialrelationbis} and the regularity of $\partial_N v$, Proposition \ref{delta2lambda} will follow  from the following
\begin{lemma}\label{controldifmainw}
We have 
\[
\left\|\partial_{N} w^t (\cdot , \lambda^t(\cdot)) - \frac{1}{2} \,\frac{\ltt-\dot\lambda^0}{t} \right\|_{C^{k-2,\alpha}(\mathcal Z)} \le C(\boldsymbol{\mathcal C})  +  \frac{1}{100} \left\|\frac{\ltt-\dot\lambda^0}{t}\right\|_{C^{k-2,\alpha}(\mathcal Z)}.
\]
\end{lemma}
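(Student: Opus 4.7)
The proof parallels that of Lemma \ref{controldifmain}, adapting the single-layer analysis to the function $w^t = (v^t - v)/t$. The key new feature is that the leading $O(1/t)$ singularities in two separate pieces of $w^t$---the single layer on $\Gamma^0$ arising from the jump of $\nabla v$ across $\Gamma^0$ (via $\Delta v$), and the ``approximate single layer'' arising from the $\chi_{\Omega^0\setminus\Omega^t}$ term in $\Delta w^t$---cancel at leading order, and the $O(1)$ remainder is itself an averaged single layer whose density is proportional to $\frac{\ltt - \dot\lambda^0}{t}$. The factor $\tfrac{1}{2}$ in the lemma then comes from the single-layer jump formula, just as in Lemma \ref{controldifmain}.

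First I would use \eqref{eqnwt} to decompose $w^t = w^t_A + w^t_B + w^t_C + \mathrm{const}$, where $w^t_A$ is the single-layer Newton potential of $\frac{1}{t}\frac{\partial_N v}{N\cdot\nu^0}\mathcal{H}^{n-1}\restriction_{\Gamma^0}$, $w^t_B$ is the solid Newton potential of $\bigl(\frac{\Delta h^0}{t^2} - \frac{\Delta\dot h^0}{t}\bigr)\chi_{\Omega^0\setminus\Omega^t}$, and $w^t_C$ is the solid potential of $-\frac{1}{2}\Delta\delta_t^2 h^0\chi_{\Omega^t}$. The term $w^t_C$ is bounded in $C^{k,\alpha}(\overline{\Omega^t})$ by Lemma \ref{potofchar}, since $\Delta\delta_t^2 h^0\in C^{k-1,\alpha}$ (using $h\in C^{k+1,\alpha}$). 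For $w^t_B$ I would apply Lemma \ref{lemapproxsingle} twice: once with $F = \Delta h^0$ (producing the $1/t^2$ piece, written as $(1/t)\int_0^1 V^\theta\, d\theta$, where $V^\theta$ is a single layer on $\Gamma^t_\theta$ with density $\rho^t_\theta = \Delta h^0\,\ltt\,\frac{N\cdot\nu^t_\theta}{\partial_{ss} u^0}$) and once with $F = -\Delta\dot h^0$ (whose output is bounded since $\ltt$ is bounded by Proposition \ref{lambdat/t}, hence contributes a harmless term).

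Next I would combine $w^t_A$ with $(1/t)\int_0^1 V^\theta\,d\theta$ and verify the cancellation: using $\partial_{ss} u^0|_{\Gamma^0} = -(N\cdot\nu^0)^2\Delta h^0$ and $\partial_N v|_{\Gamma^0_{\mathrm{out}}} = -\dot\lambda^0$, the density $\rho^t_\theta$ tends to $-\dot\lambda^0/(N\cdot\nu^0)|_{\Gamma^0}$ as $t,\theta\to 0$, which is (up to sign) the density of $w^t_A$ divided by $1/t$, so the $O(1/t)$ singularities cancel. The remainder can be rewritten as $\int_0^1 \widetilde W^\theta\,d\theta$, with $\widetilde W^\theta$ a single layer on $\Gamma^t_\theta$ whose density, after a careful Taylor expansion in $(\ltt - \dot\lambda^0)$ and in $\theta\lambda^t$, has the form $\widetilde\rho^t_\theta = A^t_\theta\cdot\frac{\ltt - \dot\lambda^0}{t} + R^t_\theta$, where $A^t_\theta$ is in $C^{k-1,\alpha}$ tending to $1/(N\cdot\nu^0)$ on $\Gamma^0$ and $R^t_\theta$ is bounded in $C^{k-2,\alpha}$. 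Then I would apply Theorem \ref{singlelayerthm} to obtain $-\partial_{\nu^t_\theta,\mathrm{out}}\widetilde W^\theta = \tfrac{1}{2}\widetilde\rho^t_\theta + \partial_{\nu^t_\theta,0}\widetilde W^\theta$, convert $\partial_{\nu^t_\theta}$ to $\partial_N$ via $(N\cdot\nu^t_\theta)\approx 1$, transfer from $(z,\theta\lambda^t)\in\Gamma^t_\theta$ to $(z,\lambda^t)\in\Gamma^t$ using the same telescoping argument as Step 1 of Lemma \ref{controldif}, and integrate in $\theta\in(0,1)$.

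The main obstacle is carrying out the cancellation in the previous paragraph precisely enough to identify the effective density $\widetilde\rho^t_\theta$. The leading coefficient $A^t_\theta$ at $t=\theta=0$ must match $1/(N\cdot\nu^0)$ precisely so that, after multiplication by $(N\cdot\nu^t_\theta)$ in the conversion of $\partial_{\nu^t_\theta}$ to $\partial_N$ and use of $(N\cdot\nu^0)^2\approx 1$, the jump formula produces exactly $\tfrac{1}{2}\frac{\ltt - \dot\lambda^0}{t}$; this is a consistency check using again $\partial_{ss} u^0|_{\Gamma^0} = -(N\cdot\nu^0)^2\Delta h^0$. The small error of size $\tfrac{1}{100}\|\frac{\ltt - \dot\lambda^0}{t}\|_{C^{k-2,\alpha}}$ then arises from the $\varepsilon_o$-smallness of $|N - \nu^t_\theta|$ and of $(N\cdot\nu^t_\theta)^2\frac{-\Delta h^0}{\partial_{ss} u^0} - 1$, combined with the standard interpolation inequality used in Step 2 of the proof of Lemma \ref{controldif}.
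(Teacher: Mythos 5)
Your overall strategy is the paper's: split $\Delta w^t$ into the exact single layer on $\Gamma^0$, the $O(t^{-2})$ solid charge on $\Omega^0\setminus\Omega^t$, and lower-order solid charges; slice the solid charge into single layers on the level sets $\Gamma^t_\theta$ via Lemma \ref{lemapproxsingle}; cancel the two $O(1/t)$ contributions using $\partial_N v=-\dot\lambda^0\circ\bar\pi_1$ and $\partial_{ss}u^0=-(N\cdot\nu^0)^2\Delta h^0$ on $\Gamma^0$; and read off the main term $\frac12\,\frac{\ltt-\dot\lambda^0}{t}$ from the single-layer jump formula. Your treatment of the $\Delta\delta^2_t h^0$ piece and of the $\frac1t\Delta\dot h^0\,\chi_{\Omega^0\setminus\Omega^t}$ piece (via Lemma \ref{potofchar} and Lemma \ref{lemapproxsingle} with bounded $\ltt$) also matches the paper.

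The gap is in your claim that, after the cancellation, the remainder is an average over $\theta$ of \emph{single} layers on $\Gamma^t_\theta$ with density $A^t_\theta\cdot\frac{\ltt-\dot\lambda^0}{t}+R^t_\theta$ with $R^t_\theta$ bounded in $C^{k-2,\alpha}$. The two $O(1/t)$ layers you cancel live on different surfaces: the one coming from $\partial_N v$ sits on $\Gamma^0$, while those coming from the solid charge sit on $\Gamma^t_\theta$, at distance $\theta\lambda^t=O(t)$ from $\Gamma^0$. Matching the densities only cancels the $O(1/t)$ masses fiberwise; the residual displacement term --- the difference of two single layers carrying the same $O(1/t)$ density on two surfaces $O(t)$ apart --- is at leading order a \emph{double} layer (its kernel is $\nabla P$, not $P$) with $O(1)$ dipole density, and it cannot be rewritten as a single layer with bounded density, so it cannot be hidden in $R^t_\theta$. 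This is precisely why the paper inserts the intermediate single layer on $\Gamma^0$ with density $(N\cdot\nu^0)\frac{\Delta h^0}{\partial\sigma/\partial s}\,\ltt$: the piece $w^t_{21}$ is then an exact single layer on $\Gamma^0$ with density $\frac{1}{t(N\cdot\nu^0)}(\dot\lambda^0-\ltt)$, to which the jump formula applies directly (Lemma \ref{wt22}), while the leftover $w^t_{22}$ is an approximate double layer whose gradient is shown to be in $C^{k-2,\alpha}$ only after the integration by parts in $\sigma$ and the splitting $I_\theta=I^\theta_1+I^\theta_2$ of Lemma \ref{reg-approx-double-layer} (one term puts $\partial_N$ on the smooth density $\bar J\Delta h^0$, the other pulls the derivative out of the integral as ${\rm div}_x$ of a vector-valued single layer). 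Without this step, or an equivalent double-layer regularity estimate, your ``careful Taylor expansion in $\theta\lambda^t$'' does not produce a bounded single-layer remainder, and the proof is incomplete.
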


Let us state a sequence of lemmas which will prove Lemma \ref{controldifmainw}.
To study the regularity of $\partial_N w^t$ we will use the equation for $w^t$ in all of $\R^n$ that was obtained in \eqref{eqnwt}.

As in Step 2 of the proof of Proposition \ref{propapriori2}  we decompose 
\[w^t= w^t_1+w^t_2 + \mbox{constant}\]
where, for $n\ge3$,
\[
 w^t_1(x) =  \int  \left(\frac 1 t  \Delta \dot h^0 \chi_{\Omega^0\setminus \Omega^t} - \frac{1}{2} \Delta \delta^2_t h^0\chi_{\Omega^t} \right)(dy) P(x-y)  \quad w^t_1=0  \ 
\]
and 
\[
w^t_2 (x) =   -\int \frac 1 t \bigg(\frac{\partial_Nv}{ N\cdot \nu^0}\, \mathcal{H}^{n-1}\restriction_{\Gamma^0} - \frac 1 t \Delta h^0 \, \chi_{\Omega^0\setminus \Omega^t} \bigg) (dy) P(x-y). 
\]
Respectively, for $n=2$ we define $w^t_1$ and $w^t_2$ as the potentials of the previous Laplacians.

The analysis of the regularity in $\overline{\Omega^t}$ of $w^t_1$ is done using Lemmas \ref{lemapproxsingle} and \ref{potofchar} which straightforwardly imply
\begin{lemma}\label{lemwt1}
We have
\[
\| \nabla w^t_1\|_{C^{k-2,\alpha}(\overline{\Omega^t})}\le C(\boldsymbol{\mathcal C}).
\]
\end{lemma}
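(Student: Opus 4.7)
\textbf{Proof plan for Lemma \ref{lemwt1}.} Write $w^t_1 = w^t_{1,a} + w^t_{1,b}$ according to the two summands in its Laplacian, namely
\[
\Delta w^t_{1,a} = \tfrac 1 t \Delta \dot h^0 \,\chi_{\Omega^0\setminus \Omega^t},
\qquad
\Delta w^t_{1,b} = -\tfrac 1 2 \Delta \delta^2_t h^0 \,\chi_{\Omega^t}
\]
(with the appropriate normalization at infinity in dimension $n=2$). The strategy is: the ``solid'' piece $w^t_{1,b}$ is handled directly by Lemma \ref{potofchar}, while the ``thin shell'' piece $w^t_{1,a}$, which diverges naively as $t\downarrow 0$, is handled by the slicing representation from Lemma \ref{lemapproxsingle}, which converts it into an integral (over $\theta \in (0,1)$) of honest single layer potentials, and uses the uniform $C^{k-1,\alpha}$ bound on $\lambda^t/t$ obtained in Proposition \ref{lambdat/t}.

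\emph{Step 1: the piece $w^t_{1,b}$.} Since the family $h\in C^{k+1,\alpha}([-1,1]\times \overline U)$, the second difference quotient $\delta^2_t h^0$ is bounded in $C^{k+1,\alpha}$ uniformly in $t\in (0,t_\circ)$, so $\Delta \delta^2_t h^0$ is bounded in $C^{k-1,\alpha}$. Applying Lemma \ref{potofchar} with $U = \R^n\setminus \overline{\Omega^t}$ and $m = k-1$ (using that $\|\Gamma^t\|_{C^{k,\alpha}_{\rrho/4}} \le C$ from Proposition \ref{blank}, which for $k\ge 2$ gives in particular the needed $C^{k+1,\alpha}_r$ regularity after a possible adjustment of $r$; otherwise one applies Schauder estimates directly in $\overline{\Omega^t}$ since this piece is smooth up to the free boundary on each side), one obtains
\[
\|w^t_{1,b}\|_{C^{k+1,\alpha}(\overline{\Omega^t})} \le C(\boldsymbol{\mathcal C}),
\]
which is much better than needed.

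\emph{Step 2: the piece $w^t_{1,a}$.} Apply Lemma \ref{lemapproxsingle} with $F = \Delta \dot h^0$: this yields the representation
\[
w^t_{1,a} = \int_0^1 W^{\theta}\, d\theta,
\qquad
W^{\theta}(x) := \int_{\Gamma^t_\theta} \bigl( F\, \tfrac{\lambda^t}{t}\!\circ\!\bar\pi_1\, \tfrac{N\cdot \nu^t_\theta}{\partial_{ss} u^0}\bigr)(y)\, P(x-y)\, d\mathcal H^{n-1}(y),
\]
and gives, for each $\theta\in(0,1)$, the bound
\[
\|\nabla W^\theta\|_{C^{k-1,\alpha}(\overline{\Omega^t})}
\le C(\boldsymbol{\mathcal C}) \bigl\| F\, \tfrac{\lambda^t}{t}\!\circ\!\bar\pi_1 \bigr\|_{C^{k-1,\alpha}(\Gamma^t_\theta)}.
\]
Since $\dot h^0 \in C^{k,\alpha}$ we have $F\in C^{k-1,\alpha}$, and Proposition \ref{lambdat/t} gives $\|\lambda^t/t\|_{C^{k-1,\alpha}(\mathcal Z)} \le C(\boldsymbol{\mathcal C})$; combined with the uniform $C^{k,\alpha}$ control on $\lambda^t$ (hence on the surface $\Gamma^t_\theta$ and its projection $\bar\pi_1$), this bounds the right-hand side by a constant depending only on $\boldsymbol{\mathcal C}$, uniformly in $\theta$. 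Integrating in $\theta\in(0,1)$ yields
\[
\|\nabla w^t_{1,a}\|_{C^{k-1,\alpha}(\overline{\Omega^t})} \le C(\boldsymbol{\mathcal C}).
\]

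\emph{Step 3: conclusion.} Adding the two estimates (and using $C^{k-1,\alpha}\hookrightarrow C^{k-2,\alpha}$) yields $\|\nabla w^t_1\|_{C^{k-2,\alpha}(\overline{\Omega^t})} \le C(\boldsymbol{\mathcal C})$, as desired. The only delicate point is to keep constants uniform in $\theta$ and $t$ when invoking Lemma \ref{lemapproxsingle}; this is ensured because the surfaces $\Gamma^t_\theta$ have $C^{k,\alpha}$ norms and separation from $\mathcal Z$ bounded independently of $(\theta,t)\in(0,1)\times(0,t_\circ)$ via Proposition \ref{blank} and the definition of the coordinates $(z,\sigma)$.
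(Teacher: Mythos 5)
Your decomposition and the two tools you invoke (Lemma \ref{potofchar} for the solid piece, Lemma \ref{lemapproxsingle} for the thin-shell piece, fed by the uniform bound of Proposition \ref{lambdat/t}) are exactly how the paper disposes of this lemma, so the approach is the same and the argument is correct. Two regularity counts are overstated but harmless: $h\in C^{k+1,\alpha}$ jointly gives $\Delta\dot h^0\in C^{k-2,\alpha}$ (not $C^{k-1,\alpha}$) and $\delta^2_t h^0$ bounded only in $C^{k-1,\alpha}$ (not $C^{k+1,\alpha}$), so the single-layer estimate \eqref{estV} and Lemma \ref{potofchar} must be applied one order lower than you state, which still yields the required $C^{k-2,\alpha}$ bound on $\nabla w^t_1$.
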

To study $w^t_2$ let us further split it as
\[w^t_2= w^t_{21}+w^t_{22} +\mbox{constant}\]
where 
\[
w^t_{21}(x) =  \int\,  \frac {1} {t \,(N\cdot \nu^0)}  \bigg( \partial_Nv + (N\cdot \nu^0)^2 \frac{\Delta h^0}{ \frac{\partial \sigma}{\partial s}} \ltt \bigg) \mathcal{H}^{n-1}\restriction_{\Gamma^0}  (dy) \,P(x-y)\]
and
\[
w^t_{22}(x) =  - \int \frac1 t \bigg( (N\cdot \nu^0)\frac{\Delta h^0}{ \frac{\partial \sigma}{\partial s}} \ltt \, \mathcal{H}^{n-1}\restriction_{\Gamma^0}  - \frac 1 t \Delta h^0 \, \chi_{\Omega^0\setminus \Omega^t} \bigg) (dy) P (x-y) .
\]

The study of $\partial_N w^t_{21}$ is done by observing that $w^t_{21}$  is a single layer potential and using Theorem \ref{singlelayerthm}. Indeed we have
\begin{lemma}\label{wt22}
We have
\[
\left\|\partial_{N} w^t_{21} (\cdot , \sigma=0) - \frac{1}{2} \,\frac{\ltt-\dot \lambda^0}{t} \right\|_{C^{k-2,\alpha}(\mathcal Z)} \le C(\boldsymbol{\mathcal C})  + \frac{1}{100} \left\|\frac{\ltt-\dot \lambda^0}{t} \right\|_{C^{k-2,\alpha}(\mathcal Z)}.
\]
\end{lemma}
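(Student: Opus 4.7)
\medskip
\noindent\textbf{Plan.} My plan is to recognize $w^t_{21}$ as a single-layer potential on $\Gamma^0$, simplify its surface density on $\Gamma^0$ using identities from \eqref{changeofcords} and \eqref{crucialrelation}, and then extract the main term $\tfrac{1}{2}(\ltt - \dot\lambda^0)/t$ via the classical jump formulas for single-layer potentials from Theorem~\ref{singlelayerthm}.

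The density simplification is the heart of the argument. The integrand defining $w^t_{21}$ contains the factor $(N\cdot\nu^0)^2 \frac{\Delta h^0}{\partial \sigma / \partial s}$, which on $\Gamma^0_{\rm out}$ equals $-1$ by the identity $\frac{\partial \sigma}{\partial s} = -(N\cdot\nu^0)^2 \Delta h^0$ from \eqref{changeofcords}. Moreover, letting $t\downarrow 0$ in \eqref{crucialrelation} and using the convergence of $v^t$ to $v$ up to $\Gamma^0$ from inside $\Omega^0$ gives $\partial_N v|_{\Gamma^0}(z,0) = -\dot\lambda^0(z)$. Substituting these two identities shows that the density $\mu^t$ of $w^t_{21}$ on $\Gamma^0$ reduces (up to a sign) to $\mu^t|_{\Gamma^0}(z) = \frac{1}{N\cdot\nu^0}\cdot \frac{\ltt(z) - \dot\lambda^0(z)}{t}$; this is exactly why the splitting $w^t_2 = w^t_{21} + w^t_{22}$ was designed in this specific way.

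Next I apply the jump relations from Theorem~\ref{singlelayerthm}: on $\Gamma^0_{\rm out}$ one has
\[
-\partial_{\nu^0,{\rm out}} w^t_{21} \;=\; \tfrac{1}{2}\mu^t + \tilde\Theta^t,
\]
with $\tilde\Theta^t$ a principal-value term satisfying the one-derivative gain $\|\tilde\Theta^t\|_{C^{k-2,\alpha}(\Gamma^0)} \le C\|\mu^t\|_{C^{k-3,\alpha}(\Gamma^0)}$ by Theorem~\ref{singlelayerthm}(iii). Writing $\partial_N = (N\cdot\nu^0)\partial_{\nu^0} + (N - (N\cdot\nu^0)\nu^0)\cdot\nabla$, the normal component yields exactly the advertised main term $\tfrac{1}{2}(\ltt - \dot\lambda^0)/t$ (up to sign), plus the error $(N\cdot\nu^0)\tilde\Theta^t$, while the tangential component is a regular remainder. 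The tangential piece is bounded by $|N - (N\cdot\nu^0)\nu^0| \cdot \|\nabla w^t_{21}\|_{C^{k-2,\alpha}(\Gamma^0)} \le \varepsilon_o \cdot C\|\mu^t\|_{C^{k-2,\alpha}(\Gamma^0)}$, using Theorem~\ref{singlelayerthm}(i) for the gradient bound and the fact that $\varepsilon_o$ from \eqref{defdelta0} is free to choose small depending only on $\boldsymbol{\mathcal C}$; this produces the $\tfrac{1}{100}$-type factor. The principal-value term $\tilde\Theta^t$ is handled via the standard interpolation $\|\mu^t\|_{C^{k-3,\alpha}} \le \epsilon\|\mu^t\|_{C^{k-2,\alpha}} + C_\epsilon \|\mu^t\|_{L^\infty}$ combined with a uniform $L^\infty$ bound on $(\ltt - \dot\lambda^0)/t$ obtained by a preliminary lower-regularity run of the same argument (combining Proposition~\ref{lambdat/t}, Lemma~\ref{lemwt1}, and \eqref{crucialrelationbis}). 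Choosing $\epsilon$ small finishes the estimate.

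The main obstacle is the density simplification itself, which requires careful simultaneous use of both coordinate systems $(z,s)$ and $(z,\sigma)$ and a precise identification of the normal trace of $v$ on $\Gamma^0$; the splitting $w^t = w^t_1 + w^t_{21} + w^t_{22}$ is engineered so that precisely this cancellation is possible, leaving a well-behaved surface density on $\Gamma^0$ proportional to $(\ltt - \dot\lambda^0)/t$. A secondary difficulty is obtaining the $L^\infty$ bound on $(\ltt - \dot\lambda^0)/t$ before the $C^{k-2,\alpha}$ bound is established (which is the content of Proposition~\ref{delta2lambda}, whose proof is being set up by the present lemma); this circularity is resolved either by a bootstrap from lower regularity, or by observing that every $L^\infty$-dependent term on the right-hand side of the interpolation inequalities can be re-absorbed into the stated $\tfrac{1}{100}$ smallness.
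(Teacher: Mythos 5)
Your proposal is correct and follows essentially the same route as the paper: you identify $w^t_{21}$ as a single layer potential on $\Gamma^0$ whose density simplifies, via \eqref{changeofcords} and $\partial_N v = -\dot\lambda^0\circ\bar\pi_1$ on $\Gamma^0$, to $\frac{1}{t(N\cdot\nu^0)}\big(\dot\lambda^0-\ltt\big)$, and you then combine the jump formula and compactness gain of Theorem~\ref{singlelayerthm}(ii)--(iii) with the splitting of $\partial_N$ into normal and tangential parts, smallness of $\varepsilon_o$, and interpolation, exactly as in the paper's proof. Your additional attention to the $L^\infty$ term left over by the interpolation (which the paper dispatches with a one-line appeal to interpolation) is a reasonable extra precaution and does not change the argument.
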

\begin{proof}
Let
\[
f(x):=    \frac {1} {t \,(N\cdot \nu^0)}  \bigg( \partial_Nv + (N\cdot \nu^0)^2 \frac{\Delta h^0}{ \frac{\partial \sigma}{\partial s}} \ltt \bigg)(x)=  \frac {1} {t \,(N\cdot \nu^0)} \bigg(\dot \lambda^0 - \ltt \bigg) (x)
\] for $x\in \Gamma^0$.  Here we have used that $\partial_N v  =  -\dot \lambda \circ \bar \pi_1$ and   \eqref{changeofcords}.

On the one hand, by Theorem \ref{singlelayerthm} (iii) we have
\[
\partial_{\nu^0,\rm out} w^t_{21} = \frac 1 2 f +\partial_{\nu^0, 0} w^t_{21} \quad \mbox{on }\Gamma^0
\]
with 
\[
\| \partial_{\nu^0, 0} w^t_{21} \|_{C^{k-2,\alpha}(\Gamma^0)} \le C \| f \|_{C^{k-3,\alpha}(\Gamma^0)} 
\]
and
\[
\| w^t_{21} \|_{C^{k-1,\alpha}(\overline{\Omega^0})}\le C \| f \|_{C^{k-2,\alpha}(\Gamma^0)}, \quad \mbox{resp. } \| \nabla w^t_{21} \|_{C^{k-2,\alpha}(\overline{\Omega^0})}\le C \| f \|_{C^{k-2,\alpha}(\Gamma^0)}
\]
where $C = C(\boldsymbol{\mathcal C})$.
Therefore, using that $|N-\nu^0|\le \epsilon$ we have
\[
\begin{split}
\left\|\partial_{N} w^t_{21} - \frac 1 2  f \right\|_{C^{k-2,\alpha}(\Gamma^0)} &\le C\epsilon \left\|w^t_{21}\right\|_{C^{k-1,\alpha}(\overline{\Omega^0})}   + 
\| \partial_{\nu^0, 0} w^t_{21} \|_{C^{k-2,\alpha}(\Gamma^0)} 
\\
&\le C\epsilon \left\|f\right\|_{C^{k-2,\alpha}(\Gamma^0)}   + 
C\| f \|_{C^{k-3,\alpha}(\Gamma^0)} 
\end{split}
\]
and the lemma follows using interpolation and choosing $\epsilon$ small enough.\end{proof}

It thus remains  to study the regularity of  $w^t_{22}$,  which we treat as  an approximate double layer.
% The regularity up to $\overline{\Omega^t}$ of  $w^t_{22}$ is given in the following Lemma.

\begin{lemma}\label{reg-approx-double-layer}
We have
\[ \|\nabla w^t_{22}\|_{C^{k-2,\alpha}(\overline{\Omega^t})} \le C(\boldsymbol{\mathcal C}).\]
\end{lemma}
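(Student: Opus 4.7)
The plan is to interpret $w^t_{22}$ as a ``second-order approximate double layer'': the term in curly brackets is designed so that the leading-order contributions of the single layer on $\Gamma^0$ and the volume integral on the thin slab $\Omega^0\setminus\Omega^t$ cancel exactly, leaving a remainder whose effective density scales like $(\lambda^t/t)^2$. Concretely, in the $(z,\sigma)$ coordinates the volume term reads
\[
\frac{1}{t^2}\int_{\mathcal Z}dz\int_0^{\lambda^t(z)}(\Delta h^0\,\bar J)(z,\sigma)\,P(x-(z,\sigma))\,d\sigma,
\]
while Lemma \ref{LemmaJacobian2} applied at $\theta=0$ rewrites the single-layer contribution as
\[
\frac{1}{t^2}\int_{\mathcal Z}dz\,\lambda^t(z)\,(\Delta h^0\,\bar J)(z,0)\,P(x-(z,0)).
\]
Writing $G(z,\sigma;x):=(\Delta h^0\,\bar J)(z,\sigma)\,P(x-(z,\sigma))$ and subtracting, Fubini gives $\int_0^{\lambda^t}[G(z,\sigma)-G(z,0)]d\sigma=\int_0^{\lambda^t}(\lambda^t-\tau)\,\partial_\sigma G(z,\tau)\,d\tau$, so after rescaling $\tau=\theta\lambda^t(z)$ one obtains $w^t_{22}(x)=\int_0^1(1-\theta)W^\theta(x)\,d\theta$, where
\[
W^\theta(x)=\int_{\mathcal Z}dz\,(\ltt(z))^2\,\partial_\sigma G\bigl(z,\theta\lambda^t(z);x\bigr).
\]

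Next I would split $\partial_\sigma G$ by the Leibniz rule. The part where $\partial_\sigma$ hits $(\Delta h^0\,\bar J)$ gives, after using Lemma \ref{LemmaJacobian2} on the level surface $\Gamma^t_\theta=\{\sigma=\theta\lambda^t(z)\}$, a genuine single layer potential on $\Gamma^t_\theta$ with density proportional to $(\ltt\circ\bar\pi_1)^2$ times a smooth factor. The part where $\partial_\sigma$ hits $P(x-(z,\sigma))$ produces $-\nabla P(x-y)\cdot N/(\partial\sigma/\partial s)$; up to the smooth reparametrization this is exactly the normal derivative of $P$, so it yields a double layer potential on $\Gamma^t_\theta$ with the same type of density. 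By Proposition \ref{blank} the surfaces $\Gamma^t_\theta$ are uniformly $C^{k,\alpha}$, by Proposition \ref{lambdat/t} the density $(\ltt)^2$ is uniformly $C^{k-1,\alpha}$, and all remaining geometric factors are uniformly $C^{k-1,\alpha}$ in position.

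I would then invoke Theorem \ref{singlelayerthm} for the single-layer piece and its double-layer analog (the corresponding appendix statement) to conclude that $\|\nabla W^\theta\|_{C^{k-2,\alpha}(\overline{\Omega^t})}\le C(\boldsymbol{\mathcal C})$ uniformly in $\theta\in(0,1)$; integrating against the bounded weight $(1-\theta)$ and applying the triangle inequality gives the desired bound on $\nabla w^t_{22}$. The main obstacle I anticipate is a bookkeeping one: organizing the decomposition so that the kernel $\partial_\sigma P$ appears cleanly as a normal derivative and the tangential pieces, which arise from variations in $z$ after moving factors between integrand and measure, reduce via integration by parts on $\Gamma^t_\theta$ to single-layer-type terms without any loss of regularity. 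Once the cancellation and this split are written down precisely, the estimate then reduces to routine applications of the potential-theoretic estimates already collected in the appendix.
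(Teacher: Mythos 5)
Your proposal is correct and follows essentially the same route as the paper: the same Fubini/averaging identity producing the weight $(1-\theta)$, the same Leibniz split of $\partial_\sigma G$ into a single-layer piece and a double-layer piece on $\Gamma^t_\theta$, and the same uniform-in-$\theta$ potential estimates. The only caveat is that the ``double-layer analog'' you invoke is not actually stated in the appendix; the paper obtains it by exactly the bookkeeping you anticipate, namely writing $\partial_N^{(y)}P(x-y)=-N(y)\cdot\nabla_x P(x-y)$ so that the double-layer term becomes $\mathrm{div}_x$ of a vector-valued single layer with $C^{k-1,\alpha}$ density, to which Theorem~\ref{singlelayerthm} applies directly.
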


\begin{proof}
We will first write our approximate double layer as an average of double layers and we will then use the  regularity results for the single layers to deduce the regularity of double layers. 

Let us compute 
\[
\begin{split}
-\int \phi   \Delta w^t_{22}  &=  \int \phi (x) \frac1 t \bigg( (N\cdot \nu^0)\frac{\Delta h^0}{\frac{\partial \sigma }{\partial s}} \ltt \, \mathcal{H}^{n-1}\restriction_{\Gamma^0} - \frac 1 t \Delta h^0 \, \chi_{\Omega^0\setminus \Omega^t} \bigg)(x)\, dx
\\
&=  \frac1 t \int_{\mathcal Z}  dz\,  \ltt (z) (\bar J\Delta h^0\,\phi )  (z, 0)-  \frac{1}{t^2} \int_Z dz \int_0^{\lambda^t(z)} d\sigma    (\bar J\Delta h^0 \,\phi )(z,\sigma)
\\
&= \int_0^1 d\theta\,   \int_{\mathcal Z}  dz\, \ltt (z)    \, \frac{1}{t}\bigg(  (\bar J\Delta h^0 \phi)  (z, 0)-   (\bar J\Delta h^0 \,\phi)(z,\theta \lambda^t) \bigg)
\\
&= -\int_0^1 d\theta \int_0^\theta d\theta' \,\int_{\mathcal Z}  \big(\ltt\big)^2 (z)   dz  \partial_\sigma(\bar J\Delta h^0\,\phi)(z, \theta' \lambda^t) \bigg)
\\
&= -\int_0^1 d\theta \int_0^\theta d\theta' \,\int_{\Gamma^t_{\theta'}}\big(\ltt\big)^2\circ \bar \pi_1  \partial_{\sigma} (\bar J\Delta h^0\,\phi) \frac{(N\cdot\nu^t_{\theta'})}{\frac{\partial \sigma }{\partial s}} 
\end{split}
\]
where we have used Lemma \ref{LemmaJacobian2}.
Changing the order of integration we find
\[
\begin{split}
-\int \phi   \Delta w^t_{22}  &=   -\int_0^1 (1-\theta) d\theta  \,\int_{\Gamma^t_{\theta}}\big(\ltt\big)^2\circ \bar \pi_1  \partial_\sigma (\bar J\Delta h^0\,\phi)   \frac{(N\cdot\nu^t_{\theta})}{\frac{\partial \sigma }{\partial s}}
\\
&= -\int_0^1 (1-\theta) d\theta  \,\int_{\Gamma^t_{\theta}}\big(\ltt\big)^2\circ \bar \pi_1  \partial_s (\bar J\Delta h^0\,\phi)   \frac{(N\cdot\nu^t_{\theta})}{\big(\frac{\partial \sigma }{\partial s}\big)^2}.
\end{split}
\]
Therefore, we have
\begin{equation}\label{wt22asint}
w^t_{22}(x) = -\int_0^1 (1-\theta) d\theta  I_\theta(x)
\end{equation}
for
\[
I_\theta(x):=\,-\int_{\Gamma^t_{\theta}}d\mathcal{H}^{n-1}(y) \,\big(\ltt\big)^2\circ \bar \pi_1(y)\, \partial_N \big( (\bar J\Delta h^0)(y)\,P(x-y)\big) \frac{(N\cdot\nu^t_{\theta})}{\big(\frac{\partial \sigma }{\partial s}\big)^2}(y).
\]

Note that 
\[\begin{split}
I_\theta(x)&= I^\theta_1(x)+I^\theta_2(x)
\\
&=: \int_{\Gamma^t_{\theta}}d\mathcal{H}^{n-1}(y) \,\left(\big(\ltt\big)^2\circ \bar \pi_1\, \partial_N (\bar J\Delta h^0) \, \frac{(N\cdot\nu^t_{\theta})}{\big(\frac{\partial \sigma }{\partial s}\big)^2} \right)(y)\,P(x-y) \ 
\\
&\hspace{15mm}+ {\rm div}_x\left(  \int_{\Gamma^t_{\theta}}d\mathcal{H}^{n-1}(y) \,\left(\big(\ltt\big)^2\circ \bar \pi_1(y)\, (\bar J\Delta h^0)  \frac{(N\cdot\nu^t_{\theta})}{\big(\frac{\partial \sigma }{\partial s}\big)^2} \,N\right) (y) P(x-y)\right).
\end{split}
\]
Therefore, recalling that $\Gamma^t_\theta\in C^{k,\alpha}$, $\ltt \in C^{k-1,\alpha}(\mathcal Z)$, $\bar\pi_1\in C^{k,\alpha}(\Gamma^t_\theta)$, $\nu^t_\theta\in C^{k-1,\alpha}(\Gamma^t_\theta)$, $J\Delta h^0\in C^{k-1,\alpha}$, $\frac{\partial \sigma }{\partial s} = u^0_{ss}$ positive and $C^{k-1,\alpha}$ and using Theorem \ref{singlelayerthm} we obtain 
\[
\|\nabla I_1\|_{C^{k-2,\alpha}(\overline{\Omega^t_\theta})} + \|\nabla I_2\|_{C^{k-2,\alpha}(\overline{\Omega^t_\theta})} \le C(\boldsymbol{\mathcal C}).
\]

The estimate of the lemma then follows from \eqref{wt22asint} observing that $\overline{\Omega^t}\subset \overline{\Omega^t_\theta}$ for all $\theta\in(0,1)$.
\end{proof}

 Lemma \ref{controldifmainw} is now an immediate consequence of Lemmas  \ref{lemwt1},  \ref{wt22}, and \ref{reg-approx-double-layer}, and Proposition \ref{delta2lambda} follows.

\section{Proof of the main result}\label{sec5}

In this section we conclude the  proof of  Theorem \ref{thm1}. If one assumes that $h^{\tau+t}-h^\tau$ satisfies $\Delta(h^{\tau+t}-h^\tau)\ge 0$ and $c^{\tau+t}-c^\tau\le 0$ for $\tau,t\in(0, t_\circ)$ then Theorem~\ref{thm1} is a straightforward consequence of the results developed in Sections 2, 3, 4, and 5. Hence, the main issue that needs to be addressed is how to remove these technical sign assumptions. This is done by using a decomposition of the form
\begin{equation}\label{decomp}
h^{t}-h^0  = \xi_+^{t} + \xi_-^{t}
\end{equation}
where  $\Delta (\xi_+^{\tau+t} -\xi_+^\tau) \ge 0$ and $\lim_{x\to\infty} (\xi_+^{\tau+t} -\xi_+^\tau)  \ge 0$ and the same with $\xi_+$ replaced by $\xi_-$ and $\ge$ replaced by $\le$. 
This decomposition is defined as follows. We let 
\[ 
\phi_+(z) := 1+\frac{1+ze^z}{e^{z}+e^{-z}}\quad \mbox{ and }\quad \phi_-(z) = -1+\frac{-1 +ze^{-z}}{e^{z}+e^{-z}}
\] 
and note that
 \begin{equation}\label{ppp}
 \phi_++\phi_- = z\end{equation}
 and that  $\phi_+$ is similar to $x^+$ (the positive part) while $\phi_-$ is similar to $-x^-$ (minus the negative part) at large scales.

Let $\zeta$ be a radial smooth cutoff function with $\zeta\equiv 1$ in $B_{\RR}$ and $\zeta\equiv 0$ outside of $B_{2\RR}$.
For $t\in(-t_\circ,t_\circ)$ and $x\in \R^n$ let us define
\[\xi_+^{t}(x):=  -\int_{\R^n}P(x-y) \, t\phi_+ \bigg( \frac 1 t \Delta (h^t-h^0) \bigg) \zeta \,(y) 
%\ + \,t\phi_+\bigg(\frac 1 t (h^t-h^0)(\infty)\bigg)
\]
and 
\[\xi_-^{t}(x):=  -\int_{\R^n}P(x-y) \, t\phi_- \bigg( \frac 1 t \Delta (h^t-h^0) \bigg) \zeta \,(y) %\ + \,t\phi_-\bigg(\frac 1 t (h^t-h^0)(\infty)\bigg)
\]

Note that by definition we have, for $\tau$ and $t$ small,  
\begin{equation}\label{ttausmall}
\begin{split}
\Delta (\xi_+^{\tau +t}&-   \xi_+^{\tau})  =  \left( (\tau+t) \phi_+ \bigg( \frac{1}{\tau+t} \Delta (h^{\tau +t}-h^0) \bigg) - \tau\phi_+ \bigg( \frac 1 \tau \Delta (h^{\tau }-h^0) \bigg)  \right) \zeta 
\\
&= \bigg( \frac{d}{dt'} \bigg|_{t'=\tau} \bigg\{t'  \phi_+ \big(  \Delta  \delta_{t'} h^0\big)  \bigg\}\, t + O(t^{1+\alpha})\bigg)\zeta
\\
&=  \bigg(  \phi_+ \big(  \Delta \delta_\tau h^0 \big)t  +  \tau  \dot \phi_+ \big(  \Delta \delta_\tau h^0 \big)\, \frac{d}{dt'} \bigg|_{t'=\tau} \big(  \Delta  \delta_{t'} h^0\big) t    + O(t^{1+\alpha}) \bigg) \zeta
\\
&= \bigg(  \phi_+ \big(  \Delta \delta_\tau h^0 \big)t  +  \tau  \dot \phi_+ \big(  \Delta \delta_\tau h^0 \big) \frac{1}{\tau} O(\tau^\alpha)  + O(t^{1+\alpha}) \bigg) \zeta
\\ &\ge t\big(1-C\tau^\alpha -Ct^\alpha\big) \zeta \ge 0 
\end{split}
\end{equation}
where in the passage from the third to the fourth line we have used that, since $h\in C^{3,\alpha}$ ,
\[\begin{split}
 \frac{d}{dt'} \bigg|_{t'=\tau} \big(  \Delta  \delta_{t'} h^0\big) &=  \Delta \frac{d}{dt'} \bigg|_{t'=\tau} \bigg(   \frac{h^{t'}-h^0}{t'}\bigg) = \Delta \bigg( -\frac{h^{\tau}-h^0}{\tau^2} +\frac{\dot h^\tau}{\tau} \bigg)
 \\
 &=   \bigg( -\frac{\Delta \dot h^{\tau} +O(\tau^{1+\alpha})}{\tau^2}+\frac{\Delta \dot h^\tau}{\tau} \bigg) = \frac{1}{\tau} O(\tau^\alpha)
 \end{split}
\]

%\cm{the passage from second to third line is a bit hard to follow. also I think a superscript for h is missing in the third line}
%where we have used that $\phi_+ \ge 1$ and $|\dot \phi_+|\le 1$.
A similar inequality (with opposite sign) holds when $+$ is replaced by $-$.  
Moreover, by \eqref{ppp},
\[\Delta (\xi_+^{t} +\xi_-^{t})  =t\phi_+ \bigg(  \frac 1 t \Delta (h^t-h^0) \bigg)\zeta + t\phi_- \bigg( \frac 1 t \Delta (h^t-h^0) \bigg) \zeta =  \Delta (h^t-h^0) \] 
since $\Delta (h^{s+t}-h^s)=0$ outside of $B_{\RR}$ and $\zeta =1$ in $B_{\RR}$.
Therefore \eqref{decomp} follows.

Next, for $t,\bar t\in(-t_\circ,t_\circ)$ we consider the  two-parameter family of solutions to obstacle problems $u^{t,\bar t}$ defined as 
\begin{equation}\label{eq-nge32}
\min \{ -\Delta u^{t,\bar t} , u^{t,\bar t}-h^{t,\bar t}\} = 0 \quad \mbox{in }\R^n,
\qquad 
\lim_{|x|\to \infty} u^{t,\bar t}(x)  = c^{t,\bar t}  \quad \text{resp.} \lim_{|x|\to \infty}\frac{ u^{t,\bar t}(x)}{-\log |x|}  = c^{t,\bar t}  
\end{equation}
where
\[
h^{t,\bar t} :=  h^0 +\xi_+^{t} +\xi_-^{\bar t}  
\]
and 
\[
c^{t,\bar t} :=  t\phi_-\bigg( \frac 1 t (c^t-c^0) \bigg) + \bar t  \phi_+\bigg( \frac{1}{\bar t} (c^{\bar t}-c^0) \bigg).
\]
%Note that  the results in Sections 2, 3, 4, and 5 show the stability of the free boundaries of $u^{t,\bar t}$  for moving $t$ and $\bar t$ fixed and for $\bar t$ fixed. 
Note that
\[u^t = u^{t,t},\qquad\mbox{and}\qquad  \eta^t= \eta^{tt}.\]
Let us denote
\[ \Omega^{t,\bar t } := \{u^{t,\bar t}-h^{t,\bar t}>0\}\qquad \mbox{and} \qquad \Gamma^{t,\bar t} := \partial{\Omega^{t,\bar t}}\]
 and let $ \eta^{t,\bar t}\in C^{k,\alpha} (\mathcal Z)$ be defined by 
\begin{equation}\label{defetats}
\Gamma^{t,\bar t} \ = \ \{\s = \eta^{t,\bar t}(z)\} \subset U_\circ.
\end{equation}

In the proof  of  Theorem \ref{thm1} the following observation will be useful.
\begin{remark}\label{coneofmonotonicity}
Note that for $\boldsymbol e= (e^1,e^2) \in S^1$ making a small enough angle with $(1,0)$ a computation similar to \eqref{ttausmall} shows that
\begin{equation}\label{order}
\Delta \big(h^{t+e^1\tilde t,\  \bar t+ e^2\tilde t} -h^{t,\bar t} \big)  \ge 0  \quad \mbox{and}\quad   c^{t+e^1\tilde t,\  \bar t+ e^2\tilde t} -c^{t,\bar t}  \le 0  
\end{equation} 
for $(t,\tilde t)$ in a small neighborhood of $(0,0)$.
Thanks to this observation,  the results developed in Sections 2 to 5 can be applied to obtain, in a neighborhood of (0,0), estimates for the derivatives of  $u^{t,\bar t}$ and $\eta^{t,\bar t}$ in a cone of directions $(t,\bar t)$. 
As a consequence, we obtain estimates  for all the first and second derivatives $\partial_t$, $\partial_{\bar t}$, $\partial_{tt}$, $\partial_{\bar t \bar t}$, $\partial_{t \bar t}$  of $u^{t,\bar t}$ and $\eta^{t,\bar t}$   in a neighborhood of $(0,0)$.
In particular we obtain estimates in the direction $(1,1)$ which are equivalent to estimates for $u^t= u^{t,t}$ and $\eta^t =\eta^{t,t}$. 
\end{remark}

%At this point, to complete the proof of Theorem \ref{thm1} we need the following standard lemmas, whose proofs are ommitted.
%\begin{lemma}\label{1stders1}
%Let $\boldsymbol e_1, \boldsymbol e_2$  be two unit  linearly independent vectors in $\R^2$. For every function $f\in C^\infty_c(\R^2)$ and $x\in \R^2$ we have 
%\[
%Df(x) =  \partial_{\boldsymbol e_1} f A^1  + \partial_{\boldsymbol e_2} f   A^2  
%\]
%where $A^i$ are $1\times 2$ real matrices depending only on $\boldsymbol e_i$.
%%In particular, we have the estimate
%%\[
%%|Df(x)|  \le C\sup_{\boldsymbol v\in \mathcal C} |\partial_{\boldsymbol v} f|
%%\]
%%where the constant $C$ depends only on $\mathcal C$.
%\end{lemma}
%
%\begin{lemma}\label{2ndders1}
%Let $\boldsymbol e_1, \boldsymbol e_2, \boldsymbol e_3$  be three unit pairwise linearly independent vectors in $\R^2$.  For every function $f\in C^\infty_c(\R^2)$ and $x\in \R^2$ we have
%\[
%D^2f(x) =   \partial_{\boldsymbol e_1\boldsymbol e_1} f(x) A^1 +\partial_{\boldsymbol e_2\boldsymbol e_2} f(x)  A^2 + \partial_{\boldsymbol e_3\boldsymbol e_3} f(x) A^3
%\]
%for $\boldsymbol e_1,\boldsymbol e_2, \boldsymbol e_3$ in $\mathcal C$ and for some $2\times 2$ matrices $A^i$ depending only on $\boldsymbol e_i$.
%%In particular, we have the estimate
%%\[
%%|D^2f(x)|  \le C\sup_{\boldsymbol v\in \mathcal C}  |\partial_{\boldsymbol v\boldsymbol v} f|
%%\]
%%where the constant $C$ depends only on $\mathcal C$.
%\end{lemma}

We may now give  the 
\begin{proof}[Proof of Theorem \ref{thm1}]

{\em Step 1.}  Assuming that $k\ge1$ we prove that $\eta^{t,\bar t}$ is one time  differentiable (jointly) in the two variables $(t,\bar t)$ in a neighborhood of $(0,0)$  with the estimate 
\begin{equation}\label{goalest}
\big\| \partial_{\boldsymbol e}  \eta^{t,\bar t} \big\|_{C^{k-1,\alpha}(\mathcal Z)}  \le  |\boldsymbol e| C(\boldsymbol{\mathcal C})
\end{equation}
and the formula 
\begin{equation}\label{thenicefomulaalle}
\partial_{\boldsymbol e} \eta^{t,\bar t} = \left( \frac{\partial_N (\partial_{\boldsymbol e} u^{t,\bar t}) }{(N\cdot \nu^{t,\bar t})^2 \,\Delta h^{t,\bar t}}\right) (z, \eta^{t,\bar t}(x)),
\end{equation}
which holds true for every vector ${\boldsymbol e}$ in the $(t,\bar t)$-plane.

Indeed, let $\boldsymbol e_1 =(1,0)$ and $\boldsymbol e_2$ some different unit vector making a small enough angle with $\boldsymbol e_1$ as in Remark \ref{coneofmonotonicity}.

By Remark \ref{coneofmonotonicity}, for fixed $(t,\bar t)$ in a small enough neighborhood of $(0,0)$ and for $i=1,2$, the one parameter family $(u^{t+e^1_i\tilde t,\  \bar t+ e^2_i\tilde t})_{\tilde t} $ satisfies the assumptions of Sections 2 to 4.
Applying Corollary \ref{corvelocityexists} to it, we find that 
\[\partial_{\boldsymbol e_i} \eta^{t,\bar t} := \frac{d}{d\tilde t}\bigg|_{\tilde t = 0} \eta^{t+e^1_i\tilde t,\  \bar t+ e^2_i\tilde t} \] 
exists in the sense that the limit defining this derivative exists in $C^0(\mathcal Z)$.

Then, Proposition \ref{apriori1} yields the estimate 
\[ \left\| \partial_{\boldsymbol e_i} \eta^{t,\bar t} \right\|_{C^{k-1,\alpha}(\mathcal Z)} \le C(\boldsymbol{\mathcal C}) \] 
and the formula 
\[
\partial_{\boldsymbol e_i} \eta^{t,\bar t} = \left( \frac{\partial_N \partial_{\boldsymbol e_i} (u^{t,\bar t}-h^{t,\bar t}) }{(N\cdot \nu^{t,\bar t})^2 \,\Delta h^{t,\bar t}}\right) (z, \eta^{t,\bar t}(x)).
\]

Since $\partial_t = \partial_{\boldsymbol e_1}$ and $\partial_{\bar t}$ is a linear combination of $\partial_{\boldsymbol e_i}$ we obtain that $\eta^{t,\bar t}$ is continuously differentiable (jointly) in the two variables $(t,\bar t)$ in a neighborhood of $(0,0)$  with the estimate \eqref{goalest} and formula \eqref{thenicefomulaalle}.

{\em Step 2.} Applying  \eqref{goalest} and formula \eqref{thenicefomulaalle} for $(t,\bar t)$  restricted to the ``diagonal'' $t=\bar t$ (still in a neighborhood of $(0,0)$) ---i.e. with $\boldsymbol e = (1,1)$---
we obtain that $\eta^t$ is differentiable with respect to $t$,  with the estimate
\begin{equation}\label{goalest1}
\big\|  \dot\eta^{t}\big\|_{C^{k-1,\alpha}(\mathcal Z)} \le C(\boldsymbol{\mathcal C})
\end{equation}
and the formula 
\begin{equation}\label{thenicefomulaalle1}
\dot \eta^{t} = \left( \frac{\partial_N ( \dot u^{t} -\dot h^t) }{(N\cdot \nu^{t})^2 \,\Delta h^{t}}\right) (z, \eta^{t}(x)).
\end{equation}
Note that \eqref{goalest1} and \eqref{thenicefomulaalle1} are identical to those of Proposition \ref{apriori1} but now they are valid under more general assumptions (we do not need to assume the sign condition that implies that the contact sets are ordered).

{\em Step 3}. Similarly we obtain 
\begin{equation}\label{goalest2}
\big\| \partial_{\boldsymbol e\boldsymbol e} \eta^{t,\bar t}\big\|_{C^{k-1,\alpha}(\mathcal Z)}   \le  |\boldsymbol e|^2 C(\boldsymbol{\mathcal C}).
\end{equation}

Indeed, let $\boldsymbol e_1$ and  $\boldsymbol e_2$  as in Step 1  and let  $\boldsymbol e_3$ be a third vector such that $\boldsymbol e_i$ are pairwise linearly independent and the angle of $\boldsymbol e_3$  with $(1,0)$ is small enough.

Using again Remark \ref{coneofmonotonicity}, for fixed $(t,\bar t)$ in a small enough neighborhood of $(0,0)$ and for $i=1,2,3$, the one parameter family $(u^{t+e^1_i\tilde t,\  \bar t+ e^2_i\tilde t})_{\tilde t} $ satisfies the assumptions of Sections 2 to 4.
Applying Corollary \ref{coraccelerationexists} we find that 
\[\partial_{\boldsymbol e_i \boldsymbol e_i} \eta^{t,\bar t} := \frac{d^2}{d\tilde t^2}\bigg|_{\tilde t = 0} \eta^{t+e^1_i\tilde t,\  \bar t+ e^2_i\tilde t} \] 
exists in the sense that the limit defining this derivative exists in $C^0(\mathcal Z)$.

Then, Proposition \ref{apriori1} yields the estimate 
\[ \left\| \partial_{\boldsymbol e_i \boldsymbol e_i} \eta^{t,\bar t} \right\|_{C^{k-1,\alpha}(\mathcal Z)} \le C(\boldsymbol{\mathcal C}). \]
Since for all $\boldsymbol e$ in the $(t,\bar t)$-plane $\partial_{\boldsymbol e\boldsymbol e}$ is a linear combination of  $\{\partial_{\boldsymbol e_i\boldsymbol e_i}\}_{i=1,2,3}$  we obtain that $\eta^{t,\bar t}$ is twice  differentiable (jointly) in the two variables $(t,\bar t)$ in a neighborhood of $(0,0)$  with the estimate \eqref{goalest2}.

{\em Step 4.} Applying  \eqref{goalest2} or $(t,\bar t)$  restricted to the ``diagonal'' $t=\bar t$ (still in a neighborhood of $(0,0)$) ---i.e. with $\boldsymbol e = (1,1)$---
we obtain that $\eta^t$ is  twice differentiable with respect to $t$,  with the estimate
\begin{equation}\label{goalest22}
\big\|  \ddot\eta^{t}\big\|_{C^{k-1,\alpha}(\mathcal Z)} \le C(\boldsymbol{\mathcal C}).
\end{equation}
Again note that \eqref{goalest2}  is identical to that of Proposition \ref{propapriori2} but now they are valid under more general assumptions.

{\em Step 5}. Finally, we complete the proof of Theorem \ref{thm1} by defining  the dipheomorphisms $\Psi^t$ from the coordinates $(z,\s)$ and the function $\eta^t$. 
Let $\phi \in C^\infty_c(U_\circ)$ be some function such that $\phi\equiv 1$ in a neighborhood of $\Gamma^0$.
Let us define 
\[
\Psi^t(x) = 
\begin{cases}
(z,\s)^{-1} \big( z(x), \s(x) + \eta^0(z(x)) + \phi(x) \big\{ \eta^t(z(x)) - \eta^0(z(x))\big\} \big)  \quad & x \in U_\circ
\\
x & x \in \R^n\setminus  U_\circ.
\end{cases}
\]
Sine we may take $U_\circ\subset \UU$ we have that $\Psi^t$ fixes the complement of $\UU$.
By definition of $\eta^t$ we easily show that $\Psi^t(\omega^0) = \Omega^t$ ---and thus $\Psi^t(\Gamma^0) = \Gamma^t$

It not difficult to check that \eqref{goalest1}, \eqref{thenicefomulaalle1}, and \eqref{goalest22} yield \eqref{psidot}-\eqref{diriV} and \eqref{psiddot} when rephrased in terms of $\Psi$.
On the other hand, estimate \eqref{estwt_restated} follows from the estimates for $w$ obtained in Step 3 of the proof of Proposition \ref{propapriori2}.

\end{proof}

\appendix

\section{Single layer potentials and auxiliary proofs}\label{app}
We recall here classical regularity properties and formula for the jump in the normal derivative for a single layer potential.
\begin{theorem}\label{singlelayerthm}
Let $U \subset B_R\subset \R^n$ be a domain such that $\partial U\in C^{m,\alpha}_r$ for some $r>0$, $m \in \mathbb N$ and $\alpha\in(0,1)$. 
Given $f\in C^{m-1,\alpha}(\partial U)$ let us define
\[
w(x) := \int_{\partial U} d\mathcal{H}^{n-1}(y)\, f(y) P(x-y)
\]
where $P$ is the Newtonian potential.

\vspace{3pt}
We then have: 

\vspace{3pt}
(i)  $w\in C^{0}(\R^n)$, $w\in C^{m,\alpha}(\overline{U})$ and $w\in C^{m,\alpha}(\overline{\R^n\setminus U})$ with the estimate
\[ \|w\|_{C^{m,\alpha}(\overline{U})} +  \|w\|_{C^{m,\alpha}(\overline{\R^n\setminus U})} \le C \|f\|_{C^{m-1,\alpha}(\partial U)}  \]
where $C$ depends only on $n$, $m$, $\alpha$, $r$, and $\|\partial U\|_{C^{m,\alpha}_{r}}$.

\vspace{3pt}
(ii)  Denoting $\partial_{\nu, \,{\rm out}} w$ and $\partial_{\nu, \,{\rm in}}w$ the (outward) normal derivatives of $w$ from outside and inside $U$ respectively we have, for all $x\in \partial U$,
\[
\partial_{\nu, \,{\rm out}} w(x) = \partial_{\nu, 0} w(x)  -\frac{1}{2} f(x)
\]
and
\[
\partial_{\nu, \,{\rm in}} w(x) = \partial_{\nu, 0} w(x)  +\frac{1}{2} f(x)
\]	
where
\[ \partial_{\nu, 0} w(x)  := \int_{\partial U} d\mathcal{H}^{n-1}(y)\, f(y) \, \nu(x)\cdot \nabla P(x-y) .\] 

\vspace{3pt}
(iii) The linear operator $T:f \longmapsto \partial_{\nu, 0} w$ maps continuously $C^{m-2,\alpha}(\partial U)$ to $C^{m-1,\alpha}(\partial U)$. More precisely, 
\[
\| \partial_{\nu, 0} w \|_{C^{m-1,\alpha}(\partial U) }\le C \| f\|_{C^{m-2,\alpha}(\partial U) }
\]
where $C$ depends only on $n$, $m$, $\alpha$, $R$, $r$, and $\|\partial U\|_{C^{m,\alpha}_{r}}$. In particular $T$ is compact in H\"older spaces.
\end{theorem}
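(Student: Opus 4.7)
The plan is to follow the standard potential-theoretic strategy of reducing to a flat boundary by local $C^{m,\alpha}$ diffeomorphisms, so that the kernel $P(x-y)$ can be analyzed against its half-space model. Concretely, I would fix a finite cover of $\partial U$ by balls in which $\partial U$ is the graph of a $C^{m,\alpha}$ function, introduce a subordinate partition of unity $\{\chi_i\}$, and write $f = \sum \chi_i f$. For each piece a change of variables flattens $\partial U \cap B_r(x_i)$ to a subset of a hyperplane; the kernel becomes $P(x-y)$ multiplied by a smooth Jacobian plus a smooth error, so up to controlled perturbations we reduce to studying a single layer on a flat piece of boundary. Points of $\partial U$ away from a given localization chart contribute smooth terms whose estimates are trivial.

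For parts (i) and (ii), continuity across $\partial U$ follows from the bound $|P(x-y)| \lesssim |x-y|^{2-n}$ (respectively $|\log |x-y||$ in dimension $2$), which is integrable against $(n{-}1)$-dimensional Hausdorff measure. To establish (ii) in the flat model one computes
\[
\nu \cdot \nabla_x P(x-y) = \frac{y_n - x_n}{n|B_1|\,|x-y|^n},
\]
splits the integral into a small disk around $x$ and its complement, and checks that the disk contribution tends to $\mp \tfrac 1 2 f(x)$ as $x_n \to 0^\pm$ (by polar coordinates, using $f \in C^{0,\alpha}$ to replace $f(y)$ by $f(x)$ up to integrable error), while the complement is continuous by dominated convergence. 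The $C^{m,\alpha}$ regularity from each side in (i) is then obtained from the observation that $w$ is harmonic off $\partial U$, together with $\partial_\nu w|_\pm \in C^{m-1,\alpha}(\partial U)$ via the jump formula in (ii) and the smoothing claim (iii); a standard Schauder estimate for the Dirichlet/Neumann problem in $C^{m,\alpha}$ domains (with the known inner and outer far-field behavior of $w$) then yields the two-sided $C^{m,\alpha}$ estimate.

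The key obstacle is part (iii), which asserts that the principal value operator $T: f \mapsto \partial_{\nu,0} w$ is Hölder-smoothing of one order. The mechanism is the surface-cancellation bound
\[
|\nu(x) \cdot (x-y)| \lesssim \|\partial U\|_{C^{m,\alpha}_r}\,|x-y|^{1+\gamma}, \qquad \gamma = \min(1, m-1+\alpha),
\]
valid for $x,y \in \partial U$ on a $C^{m,\alpha}$ surface, which reduces the apparent singularity of the kernel $K(x,y) := \nu(x) \cdot \nabla P(x-y)$ from $|x-y|^{1-n}$ to $|x-y|^{\gamma - n +1}$. In the base case $m = 2$ this places $K$ into a weakly singular class for which the standard $C^{0,\alpha}$ Hölder estimate for Calderón–Zygmund-type operators on the surface applies directly. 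For $m \ge 3$ I would argue by induction: differentiate $Tf$ tangentially on $\partial U$ and integrate by parts to move tangential derivatives from the kernel onto $f$, then apply the base estimate to the derived kernel. The delicate point is bookkeeping of the geometric coefficients (derivatives of $\nu$, of the surface metric, and of the local chart Jacobian) which must lie in $C^{m-2,\alpha}$ with norm bounded by $\|\partial U\|_{C^{m,\alpha}_r}$; this is exactly where the hypothesis $\partial U \in C^{m,\alpha}_r$ is used and explains the stated quantitative dependence of the constant. Compactness of $T$ in Hölder spaces is then immediate from the compact embedding $C^{m-1,\alpha} \hookrightarrow C^{m-2,\alpha}$.
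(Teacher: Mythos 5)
Your overall strategy coincides with the paper's: parts (i)--(ii) are the classical localization/flattening and jump-relation arguments (which the paper does not reprove but delegates to Dautray--Lions and Sobolev), and for (iii) your central mechanism --- the surface cancellation $|\nu(x)\cdot(x-y)|\lesssim |x-y|^{2}$ on a $C^{2}$ boundary, which tames the kernel $\nu(x)\cdot\nabla P(x-y)$ from order $|x-y|^{1-n}$ to order $|x-y|^{2-n}$ --- is exactly the bound the paper establishes and exploits. The paper's written proof is in fact much more modest than your outline: it proves only the case $m=2$ and only the weakened estimate $\|Tf\|_{C^{0,\beta}(\partial U)}\le C\|f\|_{L^\infty(\partial U)}$ for every $\beta<1$, by combining the cancellation bound with a difference estimate $|k(x,y)-k(\bar x,y)|\le C|x-\bar x|\,|\xi-y|^{1-n}$ obtained by differentiating the kernel along a boundary curve, and then interpolating the two bounds. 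It explicitly remarks that the full one-derivative gain stated in (iii) is classical but not needed anywhere in the paper, since a gain from $C^{m-3,\alpha}$ to $C^{m-2,\alpha}$-type control is always recovered by interpolation in the places where (iii) is invoked.

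The one genuine soft spot in your proposal is precisely the point the paper sidesteps: the base case of your induction for (iii). Weak singularity of order $|x-y|^{2-n}$ on an $(n-1)$-dimensional surface gives boundedness and H\"older continuity of $Tf$ (this is what the paper's computation delivers), but it does \emph{not} ``directly'' give $Tf\in C^{1,\alpha}$ from $f\in C^{0,\alpha}$; and your proposed remedy --- integrating by parts to move a tangential derivative of the kernel onto $f$ --- is unavailable when $f$ is merely $C^{0,\alpha}$. Establishing the full gain $C^{m-2,\alpha}\to C^{m-1,\alpha}$ requires an additional ingredient, e.g.\ the tangential-derivative (G\"unter-type) identities expressing surface derivatives of the double-layer kernel as weakly singular operators plus exact $y$-derivatives, or alternatively deducing the boundary regularity of $\partial_{\nu,0}w=\tfrac12(\partial_{\nu,\rm out}w+\partial_{\nu,\rm in}w)$ from two-sided Schauder estimates for the harmonic function $w$. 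Relatedly, your derivation of (i) from (ii) and (iii) has a mild circularity --- one must first know $w$ is $C^{1}$ up to $\partial U$ from each side for the one-sided normal derivatives in (ii) to make sense --- which is resolved in the standard way by proving the $m=1$ case of (i) directly and then bootstrapping; it would be worth saying so explicitly.
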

For completeness we provide here a
\begin{proof}[Bibliographic references and sketch of the proof of Theorem \ref{singlelayerthm}]
Properties of single layer potentials in the spirit of (i)-(ii)-(iii) --- and related ones for double layer potentials ---  are very classical results in potential theory. They are key tools in proving the existence of solution for the Dirichlet and Neumann problems in $C^{1,\alpha}$ domains by the method of boundary potentials (by solving in H\"older spaces Fredholm integral equations on the boundary of the domain). For more information on the topic see for instance the classical books of Sobolev \cite{Sobolev} or Dautray-Lions \cite{DauLio}.

The proof of (i)-(ii) is given in  \cite[Sec. II.3]{DauLio}. The proof of (i) is given in full detail only for $m=1$ but the proof for general $m$ is similar. The result for all $m$ is stated in  \cite[p. 303]{DauLio}.

The compactness property of $T$ in  (iii) is in the core of the theory for solving the Dirichlet and Neumann problems by the method of boundary potentials. 
Indeed, by (ii), the Neumann problem $\Delta w =0$ in $U$ , $\partial_\nu =g$ on $\partial U$ is equivalent to
$Tf +\frac{1}{2}f =g$, where $f$ is the charge on the boundary. Since $T$ is compact, this equation can be solved by Fredholm's alternative\footnote{In this case the orthogonality condition of Fredholm's alternative requires $\int_{\partial U} g =0$.}; see \cite[Lectures 15-19]{Sobolev}.

Roughly speaking, the reason why $Tf$ increases by one the order of differentiability of $f$ is that the integral kernel ($x\in \partial U$)
\[k(x,y) := \nu(x)\cdot \nabla P(x-y) =  c_n \nu(x) \frac{x-y}{|x-y|^n} = O(|x-y|^{n-2})\]
as $y\to x$, $y\in \partial U$, while $\partial U$ is an $(n-1)$-dimensional surface. The extra factor $|x-y|$ comes from $\nu(x)\cdot(x-y) =O(|x-y|^2)$ since $\partial U$ is smooth enough. Thus, 
$T f$ behaves similarly to $f \longmapsto \int_{\R^d} f(y) \frac{e\cdot y}{|y|^d}\,dy$, which maps $C^{k-1,\alpha}_c(\R^d)$ to $C^{k,\alpha}(\R^d)$.

Since it is not easy to find complete references for (iii), although this type of estimates are very classical, for the sake of completeness we provide next a detailed proof of a  nearly optimal estimate like (iii) in the case $m=2$ (the proof for other $m$ is more involved but similar). 
For all the purposes of this paper the optimal estimate is not necessary --- we just state the optimal result for the convenience of the reader. In our proofs, we do not need to gain a full derivative but just  to obtain a control in a finer H\"older norm to control the corresponding term by interpolation. Let us prove that if $\partial U\in C^{2,\alpha}_r$ then, for all $\beta\in(0,1)$
\begin{equation}\label{goalapp}
\big\|Tf\big\|_{C^{0,\beta}(\partial U)}\le C \|f\|_{C^{0,\alpha}(\partial U)}
\end{equation} 
(note that the optimal estimate would be with $C^{1,\alpha}$ instead of $C^{0,\beta}$). 

As a matter of fact we will prove the stronger (and almost sharp) estimate
\begin{equation}\label{goalapp2}
\big\|Tf\big\|_{C^{0,\beta}(\partial U)}\le C \|f\|_{L^\infty(\partial U)}
\end{equation}
which clearly yields \eqref{goalapp}.

Indeed, we start by showing that
\begin{equation}\label{defk}
k(x,y) := \nu(x) \frac{x-y}{|x-y|^n}
\end{equation}
satisfies 
\begin{equation}\label{propk}
\big| k(x,y)-k(\bar x,y)\big| \le C |x-\bar x| |\xi-y|^{-n+1}
\end{equation}
where $\xi$ is a point of a curve on $\partial U$ joining $x$ and $\bar x$.

Indeed, if $\gamma\subset \partial U$ is a smooth curve joining $x$ and $\bar x$ and of length comparable to $|x-\bar x|$ we have, at $\xi =\gamma(t)$
\[
\frac{d}{dt} k(\gamma(t),y)  = \nu'(\xi) \frac{\xi-y}{|\xi-y|^n} + \nu_i(\xi)  \gamma'_j(t) \frac{|z|^2 \delta_{ij}- n z_i z_j}{|z|^{n+2}} \quad \mbox{for } z= \xi-y. 
\]
Choosing an appropriate frame we may assume that $\nu_i(\xi) = \delta_{1i}$ and  $\gamma'_j(t) = C \delta_{2j}$ ---  since the former vector is normal to $\partial U$ and the latter is tangent. Therefore 
\[
\bigg| \nu_i(\xi)  \gamma'_j(t) \frac{|z|^2 \delta_{ij}- n z_i z_j}{|z|^{n+2}} \bigg|  = C\frac{|z_1z_2|}{|z|^{n+2}} \le C\frac{|z|^2|z|}{|z|^{n+2}} \le C|z|^{1-n} = C|\xi-y|^{1-n}
\]
where we have used that the first axis is normal to $\partial U$ and hence we have $|z_1|\le |z|^2$ --- by $C^2$ regularity of $\partial U$ and recalling that $z=\xi-y$ with both $\xi$ and $y$ on $\partial U$.
Therefore, an application of the mean value theorem gives
\[
\big| k(x,y)-k(\bar x,y)\big| \le C |x-\bar x| \bigg|\frac{d}{dt} k(\gamma(t),y)    \bigg| \le C|x-\bar x| |\xi-y|^{1-n} 
\]
and proves \eqref{propk}.

Finally, recalling that 
\[  | k(x,y)| \le C |x-y|^{2-n} \qquad \mbox{and}\qquad  | k(\bar x,y)| \le C |\bar x-y|^{2-n}\]
and combining this with \eqref{propk} we obtain
\[
\big| k(x,y)-k(\bar x,y)\big| \le  C|x-\bar x|^\beta |\xi-y|^{(1-n)\beta} \big(|x-y|^{(2-n)(1-\beta)} +|\bar x-y|^{(2-n)(1-\beta)} \big) 
.\]
Therefore
 
\[
\begin{split}
|T&f(x)-Tf(\bar x)| = \bigg|\int_{\partial U}f(y)\big(k(x,y)- k(\bar x, y) \big)\, d\mathcal H^{n-1}(z)\bigg|
\\
&\le \int_{\partial U}|f(y)| \big|k(x,y)- k(\bar x, y) \big|\, d\mathcal H^{n-1}(z)
\\
&\le C\|f\|_{L^\infty(\partial U)}  \int_{\partial U} |x-\bar x|^\beta |\xi-y|^{(1-n)\beta} \big(|x-y|^{(2-n)(1-\beta)} +|\bar x-y|^{(2-n)(1-\beta)} \big)
\\
&\le C\|f\|_{L^\infty(\partial U)} |x-\bar x|^\beta,
\end{split}
\]
which proves \eqref{goalapp2}.
\end{proof}

We give here the
\begin{proof}[Sketch of the proof of Proposition \ref{blank}]
For the sake of clarity we give a proof assuming that, for $t>0$ we have $\Delta(h^t-h^0)\ge0$ and $h^t-h^0\ge0$ and thus $\Omega^t\subset \omega^0$.  
We give the proof in dimension $n=2$. The proof for $n\ge 3$ is similar; see \cite{Blank}.

{\em Step 1}.  
We show that for some $t_\circ>0$ and $C_\circ$ depending only on $\boldsymbol {\mathcal C}$ we have
\begin{equation}\label{L1est}
|\Omega^0\setminus \Omega^t| \le C(\boldsymbol{\mathcal C}) t
\end{equation}

Indeed, from \eqref{vtall} we know that (recall that $v^t := \delta_t \tilde u^0$)
\[
\begin{cases}
\Delta v^t =  -\frac{\Delta h^0}{t} \chi_{\Omega^0\setminus \Omega^t} + \Delta\delta_th^0 \chi_{\Omega^t} \quad \mbox{in }\R^2
\\
\hspace{3pt}
 \lim_{x\to \infty} \frac{v^t(x)}{-\log |x|} = \delta_tc^0.
\end{cases}
\]

Note that by \eqref{rho1}  we have $\Gamma^t \subset B_{\RR}$ for $t\in[0, t_\circ)$ where $t_\circ>0$ is a small enough constant depending only on $\boldsymbol{\mathcal C}$.
Recalling that by assumption $\Delta\delta_th^0$ is supported in $B_{\RR}$,  we have
\[
 \delta_tc^0 =  \int_{\R^2 } \Delta v^t =  \int_{\R^2 }-\frac{\Delta h^0}{t} \chi_{\Omega^0\setminus \Omega^t} + \Delta\delta_t h^0 \chi_{\Omega^t}.
\]
Therefore, since  $-\Delta h^0\ge \rrho$, we find 
\[
\frac{\rho}{t} |\Omega^0\setminus \Omega^t|  \le   |\delta_tc^0| + \int_{B_{\RR}} |\Delta\delta_t h^0|  \le C(\boldsymbol{\mathcal C}).
\]

{\em Step 2}.  We first show (i), that is we prove that for  $t_\circ$ small enough we have have 
\begin{equation} \label{CKalphaest}
\|\Gamma^t\|_{C^{k,\alpha}_{\rrho/4}} \le C_o\quad \mbox{for all }t\in [0,t_\circ).
\end{equation}
Indeed, by Step 1, $ |\Omega^0\setminus \Omega^t| \downarrow 0$ as $t\to 0$ and hence, for $t$ small enough all points of $\Gamma^t$  are regular points. More precisely, for all $p\in \Gamma^t$
\[
B_{\rrho}(p)\cap \{\tilde u^t =0\} \ge c_\circ (\boldsymbol{\mathcal C})>0
\]

Then, we  apply:
\begin{itemize}
\item[1st.]   $C^{1,\alpha}$ free boundary estimates near regular points (Caffarelli \cite{Caf1,Caf2}).
\item[2nd.]  $C^{1,\alpha} \Rightarrow C^{k,\alpha}$ estimates for obstacle $h\in C^{k+1,\alpha}$ (Kinderlehrer-Nirenberg \cite{KN}).
\end{itemize}
We thus obtain \eqref{CKalphaest}.

{\em Step 3}. From \eqref{L1est} and \eqref{CKalphaest} deduce that for $t\in (0,t_\circ)$, the Hausdorff distance between $\Gamma^{t}$ and $\Gamma^s$ satisfies 
\[ d_{\rm Hausdorff}(\Gamma^{t},\Gamma^0) \le C_o\, t.\]
\end{proof}
Finally, we give the 
\begin{proof}[Sketch of the proof of Lemma \ref{lemcharinf}]
The Lemma for $n\ge 3$ is very standard. Let us prove it in the case $n=2$.

Assume that $n=2$. We want to prove that $u^t = f_*$ where
\begin{equation}\label{tildeu2}
f_*(x):= \inf \bigg\{ f(x)  \ : \ f\in C(\R^2), \   f \ge h^t, \  \Delta f \le 0,\ \lim_{x\to \infty} \frac{f}{-\log |x|} =c^t \bigg\}
\end{equation}

The admissible class in \eqref{tildeu2} is nonempty since the function\[f_1(x) := c^t\min \{0, - \log |x|\} + C_1\] is a member, provided we take $C_1>0$ large enough that $â\log |x|+C > h^t(x)$ for all $x\in\R^2$ --- here we are using \eqref{limhinfty}. Hence, $f_*(x) \in[ h^t(x), +\infty)$ is finite for all $x$.

We now check that $u^t=f_*$ is a solution of \eqref{eq-nge3} ($n=2$). First, as an infimum of superharmonic functions, it is superharmonic.  To check that it is a subsolution of the obstacle problem, we argue by contradiction. Suppose on the contrary that there exists $r, \varepsilon,\delta > 0$ (as small as we like) and $x_\circ\in\R^2$ such that $f_* > \varepsilon+h^t$ in $B_r(x_\circ)$ and $f_*(x_\circ)> \delta+ \ave_{\partial B_r(x_\circ)} f_*$.  By changing (slightly) $x_\circ$ and making $r$ and $\delta$ smaller, if necessary, we may assume that $\delta <\ep$ and
 \[
 {\rm osc}_{B_r(x_\circ) } h^t  \le   \ep \quad \Rightarrow  \quad f_*>  \sup_{B_r(x_\circ)} h^t.
 \]  %\cm{do you mean $h^t$?}
 Let $\tilde f \in C{(B_r(x_\circ))}$ be the unique harmonic function in $B_r(x_\circ)$ with Dirichlet boundary condition  $\tilde f = \frac \delta 2 + f_*$  on $\partial B_r(x_\circ)$. Note that $\tilde f> h^t$ in  $B_r(x_\circ)$, and set 
 \[
 f(x) :=
 \begin{cases}
 f_*(x) 					&x\notin B_r(x_\circ)
 \\
 \min\{\tilde f, f_*(x) \} \quad 	& x\in B_r(x_\circ).
 \end{cases}
 \]
 Then $f$ is admissible in \eqref{tildeu2} and hence $f_*\le f$.  But then by the mean value formula for $\tilde f$ we have \[f_*(x_\circ) \le f(x_\circ)\le\tilde f(x_\circ) = \frac \delta 2 + \ave_{B_r(x_\circ)} f_* \le  \frac \delta 2 + f_*(x_\circ) - \varepsilon < f_*(x_\circ)\,, \]a contradiction.
\end{proof}

\end{document}